\theoremstyle{definition}
\newtheorem{definition}[subsubsection]{Definition}
\newtheorem{example}[subsubsection]{Example}
\newtheorem{notation}[subsubsection]{Notation}
\newtheorem{remark}[subsubsection]{Remark}
\theoremstyle{plain}
\newtheorem{theorem}[subsubsection]{Theorem}
\newtheorem{proposition}[subsubsection]{Proposition}
\newtheorem{lemma}[subsubsection]{Lemma}
\newtheorem{corollary}[subsubsection]{Corollary}
\def\K{{\mathbb{K}}}
\def\K{\mathbb{K}}
\def\G{\Gamma}
\begin{document}

\author{Stefan Forcey} \address[Stefan Forcey]{
    Department of Mathematics\\
    The University of Akron\\
    Akron, OH 44325-4002}
    \email{sf34@uakron.edu}  \urladdr{http://www.math.uakron.edu/\~{}sf34/}
\author[Mar\'\i a Ronco]{Mar\'ia Ronco}
\address{MOR: Instituto de Matem\'aticas y F\'\i sica,
Universidad de Talca, Avda. Lircay s/n, Talca, Chile} \email{mariaronco@inst-mat.utalca.cl}

\title{Algebraic structures on graph associahedra}
\subjclass[2010]{ Primary 16Y99, Secondary 05E15} \keywords{Simple finite graphs, tubings, Tamari order, weak Bruhat order, surjective maps, trees}
\thanks{M. Ronco\rq s work is supported by the Project FONDECYT Regular 1171902.}


\begin{abstract} M. Carr and S. Devadoss introduced in \cite{CD} the notion of {\it tubing} on a finite simple graph $\G$, in the context of configuration spaces on the Hilbert plane. To any finite simple graph $\G$ they associated a finite partially ordered set, whose elements are the tubings of $\G$ and whose geometric realization is a convex polytope ${\mathcal K}\G$, the graph-associahedron. For the complete graphs they recovered permutahedra, for linear graphs they got Stasheff\rq s associahedra, while for simple graph they obtained the standard simplexes.

The goal of the present work is to give an \emph{algebraic} description of graph associahedra. We introduce a substitution operation on tubings, which allows us to describe the set of faces of graph-associahedra as a free object, spanned by the set of all connected simple graphs, under operations given via connected subgraphs. The boundary maps of graph-associahedra defines natural derivations in this context.

Along the way, we introduce a topological interpretation of the graph tubings and our new operations. In the last section, we show that substitution of tubings may be understood in the context of M. Batanin and M. Markl's operadic categories.
\end{abstract}
\maketitle

\section*{Introduction} \label{section:introduction}

 Non-symmetric operads are algebraic structures defined by an infinite number of operations. They may be described, using natural substitution of plane rooted trees, as algebras over a monad in a category of graded vector spaces (see for instance \cite{GiKa}, \cite{MSS} and \cite{LV}). This notion was generalized by V. Dotsenko and A. Koroshkin in \cite{DK}, who defined another monad in the category of graded vector spaces; the algebras over this monad are called shuffle operads and play an important rule in the description of Gr\"obner basis for operads. In \cite{LR}, the notion of permutad was introduced, it describes a subclass of shuffle operads, using substitution on surjective maps between finite sets, instead of plane rooted trees.

The notion of operad has been generalized recently by M. Batanin and M. Markl in \cite{BaMa}, where they introduced operadic categories, and define operads in this context. Their construction covers a large range of algebraic structures which are not operads in the strict sense, but which are obtained by composing objects of certain type. As shown in \cite{BaMa1}, it is possible to pass from operadic categories Feynman categories, defined  by R. Kaufman and B. Ward in \cite{KaWa}.

The object of our work is graph associahedra. In \cite{CD} (and see also \cite{Deva}), M. Carr and S. Devadoss introduced the notion of {\it tubing} on a finite simple graph $\G$, and defined, for a fixed graph $\G$ with $n$ nodes, an order on the set ${\mbox{Tub}(\G)}$, of tubings of $\G$. They proved that the geometric realization of ${\mbox{Tub}(\G)}$ is always a convex polytope, whose dimension is ${\mbox{}n-1}$. \begin{enumerate}
\item When $\G$ is the path, or linear graph $L_n$, tubings in $L_n$ are in a one-to-one correspondence with planar rooted trees, and their geometric realization gives the Stasheff associahedra.
\item When $\G$ is the complete graph $K_n$, the tubings on $K_n$ describe the surjective maps between finite sets. In this case the polytopes ${\mathcal K}K_n$ are the permutohedra.
\item When $\G$ is the empty graph $C_n$, with no edges, the polytope ${\mathcal K}C_n$ is the standard simplex of dimension ${\mbox {}n-1}$.
\item When $\G$ is the cyclic graph ${\mathcal C}y_n$, the M. Carr and S. Devadoss polytope is the cyclohedron.\end{enumerate}

The vector spaces spanned by the faces of certain families of graph associahedra have interesting algebraic structures. The space spanned by the faces of Stasheff associahedra, as well as the one spanned by the faces of permutohedra, have natural structures of graded conilpotent Hopf algebras, which have been widely studied (see for instance \cite{MaRe}, \cite{LodRon}, \cite{Cha}, \cite{AgSo1}, \cite{AgSo2}, \cite{HiNoTh}, \cite{NoTh}). It is well known that the space spanned by the faces of cyclohedra has a natural structure of a Hopf module over the Hopf algebra spanned by the faces of the
Stasheff associahedra (see \cite{Dev}, \cite{FoSp}, \cite{HoLa}); and the space spanned by the faces of standard simplices describe free objects for the operads of trialgebras, as defined in \cite{Cha} and \cite{LodRon}.

All these results concern graph associahedra of certain families of graphs, which satisfy very particular conditions. For instance, subgraphs of complete graphs, as well as their reconnected complements, are complete graphs. The same property holds for linear graphs or simple graphs, but it is not a frequent feature in graphs.

However, it is possible to show that the space spanned by all the faces of graph associahedra has a natural structure of pre-Lie coalgebra.

When restricted to plane trees, the pre-Lie coalgebra structure is closely related to binary products, called {\it graftings}, as described by M. Gerstenhaber in \cite{Ger}. Given two plane trees $t$ and $w$, it is possible to graft the root of $w$ to any leaf of $t$. This binary operation is called the grafting of $w$ on the $i^{\text{th}}$ leaf of $t$, and is denote by $t\circ _i w$. The faces of Stasheff associahedra may be described in terms of plane trees, and the boundary map on those faces is a derivation for the products $\circ_i$s. From an algebraic point of view, the vector space
spanned by the faces of Stasheff associahedra is the free non-symmetric operad generated by one element in degree $n$, for $n\geq 1$.

Looking at trees as tubings on the linear graph, graftings arise naturally by inserting tubings into certain tubes. This procedure may be easily transferred to the faces of permutohedra. In \cite{LR}, a new type of colored operad or operad-like structure was introduced, defined in terms of graftings of surjective maps or tubings of the complete graphs. This structure, called shuffle algebra or permutad, gives a particular type of Dotsenko and Koroshkin\rq s shuffle operad. The graftings $\circ _i$ of a non-symetric operad are replaced in this case by binary products $\circ _{\sigma}$, where
$\sigma$ is a shuffle, and the boundary map of the permutohedron is described as a derivation for these new binary operations.

Our goal is to show that it is possible to give a general notion of substitution in Carr and Devadoss\rq s graph associahedra, in such a way that when we restrict ourselves to linear graphs, we obtain the monad defining non-symmetric operads; while when we restrict substitution to complete graphs, we get the monad defining permutads. These substitutions completely describe graph associahedra on connected simple finite graphs. The vector space ${\mbox {\bf Tub}}$, spanned by the faces of all graph associahedra on connected graphs, is a free algebra on the vector space spanned by all finite
connected simple graphs with a total order on its nodes, under binary operations $\circ_{\G, t}$, where $\G$ is a simple finite connected graph and $t$ is a connected subgraph of $\G$, under certain relations. Moreover, the boundary map of the graph associahedra is a derivation for the products $\circ_{\G, t}$.

For certain families of graphs, tubings may be described as leveled trees of different types, as described in \cite{For}, \cite{FoLaSo}, \cite{FoLaSo1} and \cite{BeFoRoSh}. In these cases, substitution may be recovered as the grafting of trees.

In \cite{Cha}, F. Chapoton described a non-symmetric differential operad ${\mbox {Trias}\rq}$, and showed that the standard simplices may be completely described as the free ${\mbox {Trias}\rq}$ algebra over the vector space of dimension one.  We show that the faces of graph associahedra associated to (non-connected) finite simple graphs are the free ${\mbox {Trias}\rq}$ algebra over ${\mbox {\bf Tub}}$.

Finally, we describe a strict operadic category ${\mathcal O}_{CD}$ in the sense of M. Batanin and M. Markl, whose objects are ordered finite sets of tubings, substitution on graph associahedra gives the operadic composition in this framework. The
full subcategory of ${\mathcal O}_{CD}$ whose objects are tubings on complete graphs gives the category ${\mbox{Per}}$, introduced in \cite{BaMa1}.

In the first section we recall the main definitions of graph associahedra and set the notations that we need in the sequel. We introduce a new interpretation of tubings as finite topologies which respect the graph structure. The last subsection is devoted to the description of the pre-Lie coproduct defined on the space of faces of graph associahedra.

Section 2 contains the definition of substitution on tubings, and its main properties, such as associativity.

In the third section, we give a complete description of graph associahedra in terms of generators and relations. We prove that the vector space ${\mbox {\bf Tub}}$, spanned by tubings on connected simple finite graphs, is spanned by the set of all connected simple finite graphs under the operations $(\G, t)$, for $t\in {\mbox {Tub}(\G)}$. When we consider the restriction of substitution to linear graphs, we recover non-symmetric operads (or M. Gerstenhaber pre-Lie systems), while when we consider the substitution on the family of all complete graphs, we get permutads, as defined in \cite{LodRon}.

In section 4 we prove that the vector space ${\mbox {\bf DTub}}$, is equipped with the structure of a ${\mbox {Trias}\rq}$ algebra and that it is the free ${\mbox {Trias}\rq}$ algebra spanned by the vector space ${\mbox {\bf Tub}}$. We also show that the binary operations defined in ${\mbox {\bf DTub}}$, induce an $L$-algebra structure on ${\mbox {\bf Tub}}$, for the $L$ algebras defined by P. Leroux in \cite{Ler}.

Finally, in section 5, we recall M. Batanin and M. Markl\rq s definition of operadic category, and construct the operadic category ${\mathcal O}_{CD}$ whose objects are finite families of tubings. We prove that the substitution of tubings defines an operad in the category ${\mathcal O}_{CD}$.

All our work deals with finite graphs, equipped with a total order on its set of nodes. However, the Carr and Devadoss polytope ${\mathcal K}\G$ does not depend on the way of numbering the nodes of a graph $\G$. In fact, we need the total order on the set of nodes of a graph $\G$ uniquely to define the boundary map of ${\mathcal K}\G$ in terms of reconnected complements and substitution on graphs, and in the definition of the operadic category ${\mathcal O}_{CD}$.
\bigskip

\section{Preliminaries on graph associahedra}
\subsection{Graphs}
\medskip

All the vector spaces considered in the present work are over a field $\K$. For any set $X$, we denote by $\K[X]$ the vector space spanned by $X$. For any positive integer $n$, we denote by $[n]$ the set $\{1,\dots ,n\}$. The graphs $\G$ we deal with in the manuscript, excepted those in section 4, are simple finite and connected, and their set of nodes is totally ordered.
\medskip

Let $\G$ be a graph, whose set of nodes ${\mbox {Nod}(\G)}$ is $[n]$, we denote by ${\mbox {Edg}(\G)}$ the set of its edges. Any edge $e\in {\mbox {Edg}(\G)}$ is identified with its (unordered) pair of extremes $e=\{v,w\}$.

\begin{definition} \label{connectedcomplement} A \emph{tube} is a set of nodes of $\G$ whose induced graph is a connected subgraph of $\G$.
For a given tube $t$ and a graph $\G$, let $\G_t$ denote the induced subgraph on the graph $\G$. When $\G$ is connected, the set of all nodes of $\G$ determines the {\it universal tube} $t_{\G}$.

For any tube $t\in \G$, the {\it reconnected complement} $\G_t^*$ is the graph whose set of nodes is ${\mbox {Nod}(\G)}\setminus \{t\}$, and whose edges are determined by the pairs of nodes $\{v,w \}$ satisfying one of the following conditions:\begin{enumerate}[(a)]
\item the edge $\{ v,w\}$ belongs to ${\mbox {Edg}(\G)}$,
\item there exists nodes $u$ and $u\rq$ in $t$ such that $\{v,u\}$ and $\{ w ,u\rq\}$ belong to ${\mbox {Edg}(\G)}$.\end{enumerate}\end{definition}
\medskip

We often renumber the nodes of $\G_t^*$ and of $\G_t$ starting at 1 but preserving the order between nodes. We use the same terminology and notation for the renumbered version, when there is no chance of confusion.  In the (renumbered) $\G_t^*$, for any pair of nodes $v,w$ in ${\mbox {Nod}(\G)-t}$, we have $v<w$ in $\G$ if, and only if, $v<w$ in $\G_t^*$. The same condition holds  for the renumbered $G_t,$ for nodes in $t.$
\medskip

\begin{example} Let $\G$ be the graph

\begin{center}
\includegraphics{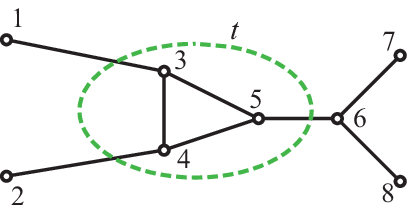}
\end{center}

where $t=\{ 3, 4,5\}$. Here, from left to right, are $\G_t$, its renumbered version, $\G_t^*$ and its renumbered version:
\medskip
\begin{center}
\includegraphics{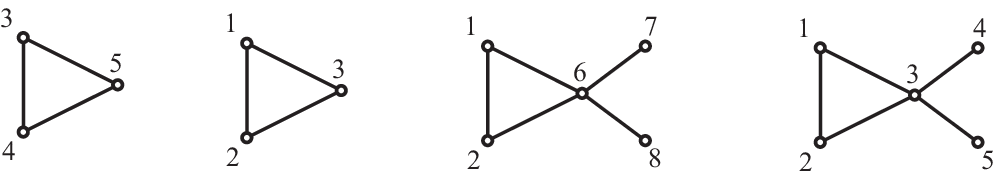}
\end{center}

\end{example}
\medskip

When $\G =\G_1\coprod \dots \coprod\G_k$ is the disjoint union of connected graphs $\G_i$, for $1\leq i\leq k$, and $t$ is a connected subgraph of $\G_{i_0}$, the reconnected complement $\G_t^*$ is the graph $\G_1\coprod \dots \coprod (\G_{i_0})_t^*\coprod \dots \coprod \G_k$.

\bigskip

\subsection{Basic constructions on Carr and Devadoss\rq s graph associahedra}
\medskip

We give a brief description of graph associahedra as introduced by M. Carr and S. Devadoss in \cite{CD}; for a complete description of their construction and further details we refer to their work.
\medskip

\begin{definition}\label{compattubes} Let $t$ and $t\rq $ be two different tubes in a simple finite graph $\G$. We say that:\begin{enumerate}
\item $t$ and $t\rq$ are \emph{nested} if  $t\subseteq t\rq$.
\item $t$ and $t\rq$ are \emph{far apart} if $t \cup t\rq$ is not a tube in $\G,$ that is, the induced subgraph of the union
 is not connected, (equivalently none of the nodes of $t$ are adjacent to a node of $t\rq$).
 \item  $t$ and $t\rq $ are {\it compatible} if they are either nested or far apart.
 \item $t$ and $t\rq$ are {\it linked} in $\G$ if they are disjoint and incompatible (that is, disjoint, and there exists an edge $e\in {\mbox {Edg}(\G)}$ which has one extreme in $t$ and the other one in  $t\rq $).
\end{enumerate}
\end{definition}
\medskip

\begin{definition} \label{defntubing} Let $\G$ be a simple connected finite graph. A {\it tubing} in $\G$ is a non-empty family $T=\{ t^i\} _{1\leq i\leq k}$ of tubes, which contains the universal tube $t_{\G}$, and such that every pair of tubes in $T$ is compatible.\end{definition}
\medskip

Note that a tubing of a connected graph covers the set of nodes of that graph by virtue of containing the universal tube, but no collection of proper tubes can cover the set of nodes. Note that a tubing $T$ in a connected graph $\G$ contains at most $n$ tubes if $\G$ has $n$ nodes. A tubing with $k$ tubes is called a {\it $k$-tubing} on $\G$, for $1\leq k\leq n$. We have many examples pictured throughout, in which tubes are shown as circled subgraphs. For simplicity the universal tube is not shown in the pictures of tubings, just assumed to be included. The only examples pictured of
non-tubings are in Remark~\ref{nonex}.
\medskip

\begin{notation} \label{notnmaxtubings} Let ${\mbox {Tub}(\G)}$ denote the set of all tubings of a connected graph $\G$.
\begin{enumerate}\item For any tubing  $T\in {\mbox {Tub}(\G)}$, we denote by ${\overline T}:=T\setminus \{t_{\G}\}$ the set of proper tubes of $T$.
\item A {\it maximal tube} of a tubing $T\in {\mbox {Tub}(\G)}$ is a tube $t\in {\overline T}$ which is not
contained in any other tube of ${\overline T}$. We denote by ${\mbox {Maxt}(T)}$ the set of maximal tubes of $T$.
\item In order to simplify notation, we denote by $T_{\G}$ the tubing whose unique tube is $t_{\G}$.
\item We say that two tubings $T$ and $T\rq$ of $\G$ are {\it compatible} if the union of $T$ and $T\rq$ is also a tubing of $\G$.
\end{enumerate}
\end{notation}
\medskip

\begin{remark}\label{rem:restinduc} Let $T$ be a tubing in a graph $\G$. \begin{enumerate}
\item For any tube $t$ in $\G$, the restriction of $T$ to $\G_t$ is either a tubing of $\G_t$ or the empty tube, we denote it by $T\vert_t$. When $t\in T$, the tubes of $T\vert_t$ are all the tubes $t\rq \in T$ such that
$t\rq \subseteq t$.
\item For any tube $t$ in $\G$ such that $\{T,t\}$ is a tubing in $\G$, $T$ induces a tubing $T_t^*$ in the reconnected complement $\G_t^*$. A tube $t\rq$ of $T_t^*$ is either a tube $t\rq $ of $T$ such that $t\rq \cap t = \emptyset$, or $t\rq$ is the restriction of a tube which contains $t$ to the set of nodes which do not belong to $t$.

\end{enumerate}\end{remark}
\medskip
\begin{remark}\label{nonex}
Note that the requirement for $\{T,t\}$ to be a tubing in $\G$ is necessary for the definition of the tubing $T_t^*$. Consider for instance the graph
\medskip

\begin{figure}[h]
\includegraphics{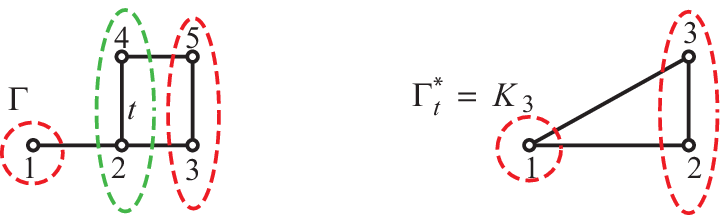}
\end{figure}

with the tube $t=\{ 2,4\}$. The tubing $T=\{ \{1\}, \{3,5\}, \{1,2,3,4,5\}\}$ does not induce a tubing on $\G_t^*$.
\end{remark}

\subsubsection{Topological interpretation}\label{stefan}
We show here that the tubings of connected graphs are precisely topological bases which 1) are comprised of tubes, and 2) generate a topology on nodes which respects the connectivity of the graph, and its reconnected complements. Recall that a space is \emph{topologically connected} if there are no two disjoint open sets which cover the space. Recall that a \emph{subspace topology} is formed by all intersections of any open set with the subspace. Recall that a \emph{topological basis}  on a set $X$ is a collection of subsets of $X$, which covers $X$, and such that for any two basis elements
their intersection $I$ is itself covered by basis elements that are subsets of $I.$  A basis on $X$ generates a unique topology on $X$.

Let $T$ be a collection of tubes on a connected graph $\Gamma.$
 \begin{theorem}\label{topo}
 $T$ is a tubing on $\Gamma$ if, and only if, $T$ is a topological basis on ${\mbox { \emph{Nod}}(\G)}$ such that the
following \emph{connectivity condition} holds: for all pairs $\{v,v'\}$ of nodes, if $\{v,v'\}$ is an edge in $\Gamma$ or in $\Gamma^*_t$ for
 $ t\in T$,  then $\{v,v'\}$ is connected as a subspace in the topology generated by $T.$
\end{theorem}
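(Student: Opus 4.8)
\emph{Plan.} The plan is to prove the two implications separately. ``Tubing $\Rightarrow$ basis with the connectivity condition'' is essentially a bookkeeping exercise with the definitions; the converse has exactly one genuinely hard case, the rest being routine.

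\emph{Tubing $\Rightarrow$ basis with the connectivity condition.} Assume $T$ is a tubing. It covers $\mathrm{Nod}(\Gamma)$ because $t_\Gamma\in T$, and for $t,t'\in T$ compatibility means that $t\cap t'$ is the smaller of the two tubes (if nested) or is empty (if far apart); in either case $t\cap t'$ is a union of members of $T$ contained in it, so $T$ is a basis. In the finite topology generated by $T$, a two-point set $\{v,v'\}$ is disconnected as a subspace precisely when some tube of $T$ contains $v$ but not $v'$ and some tube contains $v'$ but not $v$. If this happened for an edge $\{v,v'\}$ of $\Gamma$, the separating tubes $t_1\ni v$, $v'\notin t_1$ and $t_2\ni v'$, $v\notin t_2$ would be non-nested, hence far apart, hence disjoint with no edge between them, which is impossible since $\{v,v'\}$ joins them. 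If $\{v,v'\}$ is instead an edge of $\Gamma_t^{*}$ for some $t\in T$, then either it is already an edge of $\Gamma$ (previous case), or clause (b) of Definition~\ref{connectedcomplement} provides $u,u'\in t$ with $\{v,u\},\{v',u'\}\in\mathrm{Edg}(\Gamma)$; then $t$ and $t_1$, being compatible and joined by the edge $\{v,u\}$, must be nested, and since $v\in t_1\setminus t$ we get $t\subseteq t_1$, and symmetrically $t\subseteq t_2$, whence $t\subseteq t_1\cap t_2=\emptyset$, absurd. So the connectivity condition holds.

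\emph{Basis with the connectivity condition $\Rightarrow$ tubing.} Now let $T$ be a basis of tubes satisfying the connectivity condition. The key preliminary fact is that, since $T$ is finite, each node $x$ has a smallest open neighbourhood $U_x=\bigcap\{w\in T: x\in w\}$, and $U_x\in T$: being open it is a union of members of $T$, one of which contains $x$ and hence equals $U_x$ by minimality. Consequently $x\in w\in T$ implies $U_x\subseteq w$; moreover $\{v,v'\}$ is connected as a subspace iff $v\in U_{v'}$ or $v'\in U_v$, so every edge of $\Gamma$, and of each $\Gamma_w^{*}$ with $w\in T$, joins two nodes one of whose minimal neighbourhoods contains the other. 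Two consequences dispose of most of the statement. First, if $t,t'\in T$ were disjoint and linked, an edge joining them would produce a disconnected pair, contradicting the connectivity condition, so no incompatible \emph{disjoint} pair occurs. Second, once $T$ is known to be pairwise compatible, its maximal tubes are pairwise far apart, hence disjoint with no edge among them, and they cover $\mathrm{Nod}(\Gamma)$; since $\Gamma$ is connected there is exactly one maximal tube, $t_\Gamma$, so $t_\Gamma\in T$ and $T$ is a tubing.

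\emph{The hard case, and the main obstacle.} What remains is to exclude a pair of tubes of $T$ that \emph{intersect and are not nested}; this is the step I expect to be the crux. I would argue by minimal counterexample: among all such pairs choose $(s,s')$ with $|s\cap s'|$ least, and set $D=s\cap s'$, $P=s\setminus s'$, $R=s'\setminus s$ (all nonempty). For $p\in P$ and $r\in R$ the set $\{p,r\}$ is separated by $s$ and by $s'$, hence is disconnected, hence an edge neither of $\Gamma$ nor of any $\Gamma_w^{*}$ with $w\in T$. Next, every connected component of $\Gamma_D$ is adjacent in $\Gamma$ to $P$ and to $R$: if $Y$ denotes the union of the components not adjacent to $P$, then $Y$ is a union of connected components of $\Gamma_s$ (its nodes have no $\Gamma$-neighbour in $P$, and their $\Gamma$-neighbours in $D$ stay inside $Y$), so $Y=\emptyset$ since $\Gamma_s$ is connected and $P\neq\emptyset$; symmetrically for $R$. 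Pick a component $C$ of $\Gamma_D$ and edges $\{p_0,q\}$, $\{r_0,e\}$ of $\Gamma$ with $p_0\in P$, $r_0\in R$, $q,e\in C$. Since $U_q$ is a tube contained in $D$ (as $q\in D$) and meeting $C$, it lies inside $C$. Now build a strictly increasing chain of tubes of $T$ inside $C$ starting from $W_0=U_q$: if $W_i=C$, stop; otherwise connectedness of $\Gamma_C$ gives an edge $\{a,b\}$ with $a\in W_i$ and $b\in C\setminus W_i$, and applying the connectivity condition to it, together with $U_a\subseteq W_i$ and $b\notin W_i$, forces $a\in U_b$; thus $W_i$ and $U_b$ meet while $U_b\not\subseteq W_i$, so either $W_i\not\subseteq U_b$ — making $(W_i,U_b)$ an intersecting non-nested pair of tubes of $T$ with $|W_i\cap U_b|\le|W_i|<|D|$ (as $W_i\subsetneq C\subseteq D$), contradicting minimality — or $W_i\subsetneq U_b$, and we set $W_{i+1}=U_b$. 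Since $C$ is finite the chain terminates at $W=C\in T$; but then $\{p_0,r_0\}$ is an edge of $\Gamma_C^{*}$ by clause (b) of Definition~\ref{connectedcomplement}, witnessed by $q,e\in C$ (note $p_0,r_0\notin C$ since $C\subseteq D$), contradicting that $\{p_0,r_0\}$ is disconnected. Hence no intersecting non-nested pair exists, $T$ is pairwise compatible, and by the second consequence above $T$ is a tubing.
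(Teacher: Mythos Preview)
Your proof is correct. Both directions are sound, and the chain-building argument for the ``hard case'' is carefully executed: the minimality of $|s\cap s'|$, the reduction to a single connected component $C$ of $\Gamma_D$, the verification that each $W_i$ and each $U_b$ stay inside $C$, and the final appeal to clause~(b) of Definition~\ref{connectedcomplement} with $C\in T$ all check out.

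Your route to the converse differs from the paper's. The paper argues by strong induction on $|\mathrm{Nod}(\Gamma)|$: given an intersecting non-nested pair $s,t$, the basis property forces $u=s\cap t$ to be covered by tubes of $T$ lying inside $u$; the inductive hypothesis applied to the restricted collection on $\Gamma_u$ yields compatibility there, which (the paper claims) forces $u$ itself to be a tube of $T$, after which one finds the disconnected edge of $\Gamma^*_u$. Your argument instead runs a minimal-counterexample on $|s\cap s'|$ and explicitly manufactures a tube of $T$ equal to a component $C$ of the intersection via the chain $W_0\subsetneq W_1\subsetneq\cdots$. What you gain is self-containment and robustness: you never assume $s\cap s'$ is connected (you pass to a component), and you avoid having to check that the restricted collection on a subgraph again satisfies the full connectivity hypothesis, which the paper treats rather briskly. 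What the paper's approach buys is brevity once one is willing to accept the inductive restriction step. Both arguments land on the same endgame: produce some $w\in T$ inside the overlap and exhibit an edge of $\Gamma^*_w$ joining the two sides.
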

\begin{proof}
First we show that if $T$ is a basis obeying our connectivity condition, then $T$ is a tubing. We use strong induction on the number of nodes in $\Gamma$. The case for one node is trivial. We assume the implication holds for $k < n$ nodes, and prove the case of $\Gamma$ with $n$ nodes. To show that the connectivity condition implies compatibility of tubes, we show the contrapositive:  if tubes $s,t\in T$ are incompatible, then there is an edge $\{v,v'\}$ (either in $\Gamma$ or a reconnected complement) that is disconnected as a subspace. There are two cases when $s,t\in T$ are incompatible:
\begin{enumerate}
\item[($i$)]~~$s$ and $t$ are linked. Taking the
nodes that are connected by the linking edge to be $v\in s$ and $v'\in t,$ we see that $\{v,v'\}$ has the sets $\{v\}$ and $\{v'\}$ in its subspace topology, so it is disconnected.
\item[($ii$)]~~$s$ and $t$ are  intersecting (not nested). Then since $T$ is a basis, $ u= s\cap t$
must be covered by subsets of $u$ that are tubes in $T.$  In fact, $u$ is connected and the tubes in $T\vert_u$ form a topological basis for $u$, which inherits the connectivity condition. Thus by the inductive assumption, these tubes in $u$ will be compatible. Therefore $u$ itself must be a tube in $T$ in order to achieve a cover.  Then letting $v\in s-t$ be connected by an edge to $u$ and $v'\in t-s$ be connected by an edge to $u$, we have that $\{v,v'\}$ is an edge in $\Gamma^*_u$ and simultaneously that the subspace topology of $\{v,v'\}$ is disconnected.
\end{enumerate}
Therefore our basis elements of $T$ are tubes which are compatible. Since $T$ is a basis, the tubes cover ${\mbox {Nod}(\G)},$ but since they are compatible this means the universal tube must be included as an element of $T$, so our basis is a tubing.

Finally we show the other implication: if $T$ is a tubing then $T$ is a topological basis that meets our connectivity condition. First, $T$ covers ${\mbox {Nod}(\G)}$ since the universal tube is in $T$ by definition.  Second, pairwise intersections of tubes are covered by the tubes inside them---since the only intersections of tubes are empty or (nested) tubes in $T$. We show that this topological basis obeys the connectivity condition. Again we prove the contrapositive: we show that if the connectivity condition does not hold then there is a pair of incompatible tubes. Again there are two
cases:
\begin{enumerate}
\item[($i$)]~~let
$\{v,v'\}$ be an edge in $\Gamma$ and assume that the subspace of those two nodes is disconnected. Therefore there must be a pair of disjoint tubes in $T$, one containing $v$ and the other $v'$, which are thus linked, so incompatible.
\item[($ii$)]~~assume $\{v,v'\}$ is not an edge in $\Gamma$, but
$\{v,v'\}$ is an edge of $\Gamma^*_t$ for some tube $t.$ Therefore both $v$ and $v'$ are connected by an edge to $t$. Also assume that the subspace of those two nodes is disconnected. Then there must be a pair of tubes in $T$  one containing $v$ and the other $v'$ (but neither containing both nodes). If either tube in the pair is incompatible with $t,$ we are done. If not, then both tubes must contain $t$, and thus are intersecting but not nested, so incompatible with each other.
\end{enumerate}
\end{proof}

For a finite topological space, being connected implies being path-connected. Therefore we have the easy corollary to Theorem~\ref{topo}:
\begin{corollary}\label{paths}
If $T$ is a tubing on $\G,$ then any pair of nodes connected by a path (a sequence of edges in the graph $\G$) are also connected by a topological path (continuous map from [0,1] to ${\mbox {Nod}(\G)}$)  in the topology generated by $T.$ Moreover, given the graphical path, there exists a topological path whose range is precisely the nodes in the graphical path.
\end{corollary}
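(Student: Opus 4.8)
The plan is to reduce the whole statement to the two-node case, which is exactly what the connectivity condition of Theorem~\ref{topo} delivers, and then to glue. Fix a graphical path $v_0,v_1,\dots ,v_m$ in $\Gamma$, so that each $\{v_{i-1},v_i\}$ is an edge of $\Gamma$ and, since $\Gamma$ is simple, $v_{i-1}\neq v_i$. Because $T$ is a tubing, Theorem~\ref{topo} applies and its connectivity condition holds; in particular, for each $i$ the two-point set $\{v_{i-1},v_i\}$, equipped with the subspace topology inherited from the topology generated by $T$ on ${\mbox{Nod}(\Gamma)}$, is topologically connected. (Here I would note that the subspace topology is transitive, so ``connected as a subspace'' is unambiguous whether one computes it inside ${\mbox{Nod}(\Gamma)}$ or inside the finite set of path-nodes.)

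Next I would invoke the quoted fact that a \emph{finite} topological space which is connected is path-connected. Applied to the finite connected space $\{v_{i-1},v_i\}$, this produces a continuous map $\gamma_i\colon [0,1]\to \{v_{i-1},v_i\}\hookrightarrow {\mbox{Nod}(\Gamma)}$ with $\gamma_i(0)=v_{i-1}$ and $\gamma_i(1)=v_i$; composing with the inclusion keeps it continuous into ${\mbox{Nod}(\Gamma)}$. Since $\gamma_i$ attains both values $v_{i-1}$ and $v_i$ and its target contains only these two points, the image of $\gamma_i$ is precisely $\{v_{i-1},v_i\}$.

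Finally I would concatenate. Define $\gamma\colon [0,1]\to {\mbox{Nod}(\Gamma)}$ by $\gamma(t)=\gamma_i\bigl(mt-(i-1)\bigr)$ for $t\in\bigl[\tfrac{i-1}{m},\tfrac{i}{m}\bigr]$. This is consistent at the break points $t=i/m$ (both expressions give $v_i$) and continuous by the pasting lemma for the finite closed cover $\bigl\{[\tfrac{i-1}{m},\tfrac{i}{m}]\bigr\}_{1\le i\le m}$ of $[0,1]$. Then $\gamma(0)=v_0$ and $\gamma(1)=v_m$, so $\gamma$ is a topological path joining the two given nodes, and its range is $\bigcup_{i=1}^m\{v_{i-1},v_i\}=\{v_0,v_1,\dots ,v_m\}$, that is, exactly the set of nodes occurring in the graphical path. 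This settles both assertions at once.

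As for the main obstacle: there really is none of substance, which is why the result is only a corollary. The only points requiring a little care are the transitivity of the subspace topology (used above) and the continuity of a finite concatenation of paths, i.e.\ the pasting lemma. If one wished to avoid the explicit $m$-fold concatenation, an alternative is to observe that the union $\{v_0,\dots ,v_m\}$ is itself connected, being a chain of connected subspaces $\{v_{i-1},v_i\}$ meeting consecutively, and then to apply the finite-connected-implies-path-connected fact a single time to this set; the concatenation route is slightly more hands-on but has the advantage of yielding the sharper ``range is precisely the path-nodes'' clause directly.
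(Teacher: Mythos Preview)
Your proof is correct and is essentially a careful unpacking of the paper's one-line justification, which merely says ``For a finite topological space, being connected implies being path-connected'' and declares the result an easy corollary of Theorem~\ref{topo}. Your edge-by-edge concatenation and the alternative you mention (showing $\{v_0,\dots,v_m\}$ is connected as a chain of overlapping connected pairs and applying the finite-space fact once) are both natural ways to fill in that sketch, and either one yields the sharper ``range is precisely the path-nodes'' clause.
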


When a graph is not connected, but rather contains multiple connected components, there are two existing definitions of tubing. In this paper, we follow \cite{CD} by requiring some of the connected components to not be themselves elements of the tubing:

\begin{definition} \label{tubingdisconnected} Let $\G =\G_1\coprod \dots \coprod \G_k$ be a non-connected simple graph, where $\G_i$ is a connected component, for $1\leq i\leq k$ and $2\leq k$. A tubing $T$ of $\G$ is a list  $(T_1,\dots ,T_k) $, satisfying that:\begin{enumerate}
\item  there exists a non-empty set $i_1<\dots <i_s$ such that $T_{i_j} ={\overline {W}}_{i_j}$, with $W_{i_j} \in {\mbox {Tub}(\G_{i_j})}$,
\item for $i\notin \{i_1, \dots, i_s\}$, $T_i\in  {\mbox {Tub}(\G_i)}$.\end{enumerate} .\end{definition}

\begin{remark} \label{rem:restriction} Let $\G$ be a graph and let $\Omega $ be another graph with the same set of nodes than $\G$ and such that ${\mbox {Edg}(\Omega)}\subseteq  {\mbox {Edg}(\G)}$, there exists a natural surjective map ${\mbox {res}_{\Omega}^{\G}}:{\mbox {Tub}(\G)}\longrightarrow {\mbox {Tub}(\Omega)}$. Given three graphs $\Theta$, $\Omega$ and $\G$ with the same number of nodes and such that
${\mbox {Edg}(\Theta)}\subseteq {\mbox {Edg}(\Omega)}\subseteq  {\mbox {Edg}(\G)}$, it is immediate to verify that the composition ${\mbox {res}_{\Theta}^{\Omega}}\circ {\mbox {res}_{\Omega}^{\G}}$ coincides with the application ${\mbox {res}_{\Theta}^{\G}}$.\end{remark}

For instance, if $\G = K_3$ and $\Omega = L_3$ is the linear graph as follows:
\begin{center}
   \includegraphics{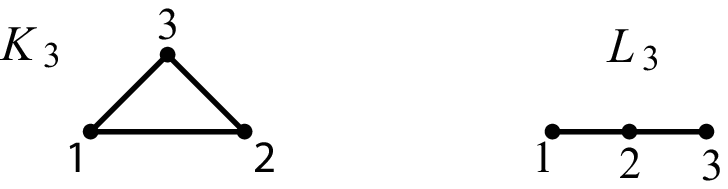}
\end{center}
then the tubings $T= \{ \{1\}, \{ 1,3\}, \{1,2,3\}\}$ and $T\rq = \{ \{3\}, \{ 1,3\}, \{1,2,3\}\}$ on $K_3$  both map to the same tubing: $\{ \{1\}, \{3\}, \{1,2,3\}\}$ on $L_3$.

\begin{definition} \label{ordertubings} For any graph $\G$, the set ${\mbox {Tub}(\G)}$ of tubings of $\G$ is partially ordered by the relation:
\begin{equation*}T\preceq T\rq\ {\rm if}\ T\ {\rm is\ obtained\ from\ } T\rq \ {\rm by\ adding\ compatible\ tubes}.\end{equation*}\end{definition}
\medskip

In \cite{CD}, M. Carr and S. Devadoss proved that, for any simple finite graph $\G$, the geometric realization of the partially ordered set $({\mbox {Tub}(\G)}, \preceq)$ is the barycentric division of a simple, convex polytope ${\mathcal K}\G$ of dimension $n$-$1$, where $n$ is the number of nodes of $\G$, whose faces of dimension $r$ are indexed by the $n$-$r$ tubings of $\G$, for $1\leq r\leq n$.

The vertices of the polytope ${\mathcal K}\G$ coincide with the subset ${\mbox {MTub}(\G)}$ of minimal tubings, while the universal tubing $T_{\G}$ corresponds to the $(n{\it -}1)$-cell of ${\mathcal K}\G$.
\medskip

\begin{example} \label{excomplete graph} For the complete graph $K_n$, with set of nodes $[n]$ and edges ${\mbox {Edg}(K_n)}=\{ (i,j)\mid 1\leq i<j\leq n\}$, any pair of nodes is an edge. Therefore, any tubing $T$ of $K_n$ may be identified with a sequence of subsets $t^1\subsetneq t^2\subsetneq \dots \subsetneq t^r\subsetneq [n]$.

As shown in \cite{Deva}, there exists a natural bijection between tubings of $K_n$ and surjective maps $x: [n]\longrightarrow [r]$, for $1\leq r\leq n$, given by:
\begin{equation*}x_T(i)=j, {\rm for}\ i\in t^j\setminus t^{j-1},\end{equation*}
where $T= \{t^1\subsetneq t^2\subsetneq \dots \subsetneq t^r\}$.

In this case, the polytope ${\mathcal K}K_n$ is the permutohedron of dimension $n{-}1$.\end{example}
\medskip

Any simple graph $\G$ with $n$ nodes, may be obtained from $K_n$ by eliminating some edges.  The surjective map ${\mbox {res} _{\G}^{K_n}}$ describes the way faces of the permutohedron are contracted in order to obtain the polytope ${\mathcal K}\G$.
\medskip

The following result is the original characterization of faces of the graph associahedra from Carr and Devadoss (see Theorem 2.9 of \cite{CD}).

\begin{theorem}\label{faces} For any graph $\G$ and any proper tube $t$ in it, the facet of ${\mathcal K}\G$ corresponding to the tubing $\{t\}$ is isomorphic to the product ${\mathcal K}\G _t \times {\mathcal K}\G _t^*$.\end{theorem}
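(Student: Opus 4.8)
The plan is to reduce the statement to an isomorphism of face posets and then construct that isomorphism explicitly. By the Carr--Devadoss identification of $(\mathrm{Tub}(\G),\preceq)$ with the face poset of ${\mathcal K}\G$, the facet labelled by the $2$-tubing $\{t,t_{\G}\}$ has for its face poset the principal down-set
\[
\mathcal{F}_t:=\{\,T\in \mathrm{Tub}(\G)\ :\ t\in T\,\},
\]
whereas the face poset of a product of polytopes is the product of their face posets, so ${\mathcal K}\G_t\times{\mathcal K}\G_t^*$ has face poset $(\mathrm{Tub}(\G_t),\preceq)\times(\mathrm{Tub}(\G_t^*),\preceq)$. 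Thus it suffices to produce an isomorphism of posets $\mathcal{F}_t\cong\mathrm{Tub}(\G_t)\times\mathrm{Tub}(\G_t^*)$.

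One direction is handed to us by Remark~\ref{rem:restinduc}: for $T\in\mathcal{F}_t$ set $\Phi(T):=(T\vert_t,\,T_t^*)$, which is well defined precisely because $t\in T$, and is visibly order preserving. The content is the inverse. Given $A\in\mathrm{Tub}(\G_t)$ and $B\in\mathrm{Tub}(\G_t^*)$ I would set
\[
\Psi(A,B):=\{t_{\G},\,t\}\ \cup\ \overline{A}\ \cup\ \{\,\lambda(b)\ :\ b\in\overline{B}\,\},
\]
where $\lambda$ lifts a tube $b$ of $\G_t^*$ to a tube of $\G$ by the rule $\lambda(b):=b$ if $b$ is far apart from $t$ in $\G$, and $\lambda(b):=b\cup t$ otherwise.

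The crux is a \emph{reconnection lemma} relating three kinds of tubes: those of $\G$ compatible with $t$, those of $\G_t$, and those of $\G_t^*$. I would establish: (i) a non-empty $b\subseteq\mathrm{Nod}(\G)\setminus t$ is a tube of $\G_t^*$ if and only if $b$ is a tube of $\G$ or $b\cup t$ is a tube of $\G$, and, refining the alternatives to ``$b$ a tube far apart from $t$'' versus ``$b\cup t$ a tube'', these are mutually exclusive --- so $\lambda$ is well defined and $\lambda(b)\setminus t=b$; (ii) conversely every proper tube $s\ne t$ of $\G$ compatible with $t$ is either disjoint from $t$, and hence far apart from it, or contains $t$, and in both cases $s=\lambda(s\setminus t)$ --- so $s\mapsto s\setminus t$ and $\lambda$ are mutually inverse bijections between the proper tubes of a tubing $T\ni t$ that are not contained in $t$ and the proper tubes of $T_t^*$; (iii) compatibility is preserved in both directions. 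All three I would prove by the same device: a walk in $\G$ that dips in and out of $t$ can be short-circuited, using Carr--Devadoss's edge condition (b) of Definition~\ref{connectedcomplement}, to a walk in $\G_t^*$ avoiding $t$, and conversely a type-(b) edge of $\G_t^*$ expands through a path in the connected set $t$. Granting the lemma, the proper tubes of a tubing $T\ni t$ split, by compatibility with $t$, into those inside $t$ (which constitute $\overline{T\vert_t}$, compatibility in $\G$ and in $\G_t$ agreeing on subsets of $t$) and those disjoint from or containing $t$ (which match, via $\lambda$, the proper tubes of $T_t^*$); this yields $\Psi\circ\Phi=\mathrm{id}$ and $\Phi\circ\Psi=\mathrm{id}$, and both maps respect $\preceq$ because the correspondences of proper tubes respect inclusion.

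The main obstacle is part (iii), and inside it the subcase of two tubes $b,b'$ of $\G_t^*$ that both ``pass through $t$'', i.e.\ with $\lambda(b)=b\cup t$ and $\lambda(b')=b'\cup t$: a priori these could meet in $t$ without being nested, which would wreck $\Psi$. Condition (b) of Definition~\ref{connectedcomplement} is exactly what prevents this: if $b$ has a $\G$-edge into $t$ and $b'$ has a $\G$-edge into $t$, then $b$ and $b'$ are linked in $\G_t^*$, so $b,b'$ being far apart in $\G_t^*$ forces at least one of them not to pass through $t$. Once this structural point is pinned down, the remaining checks --- well-definedness of $\Phi$ from Remark~\ref{rem:restinduc}, order-preservation, and the reduction to face posets --- are routine.
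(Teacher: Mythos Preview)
The paper does not prove this theorem: it is stated with the attribution ``the original characterization of faces of the graph associahedra from Carr and Devadoss (see Theorem 2.9 of \cite{CD})'' and no proof is given here. So there is no in-paper argument to compare against.

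Your proposal is correct and is essentially the standard Carr--Devadoss argument: identify the facet with the down-set $\mathcal{F}_t=\{T\in\mathrm{Tub}(\G):t\in T\}$, then build the poset isomorphism $\mathcal{F}_t\cong\mathrm{Tub}(\G_t)\times\mathrm{Tub}(\G_t^*)$ via restriction/reconnected-complement in one direction and the lift $\lambda$ in the other. Your reconnection lemma and its proof sketch are right, including the key observation that condition (b) of Definition~\ref{connectedcomplement} forces any two lifted tubes $b\cup t$ and $b'\cup t$ to come from tubes $b,b'$ that are adjacent in $\G_t^*$, hence nested if compatible. One small point worth making explicit in the mixed case: if $b\subsetneq b'$ in $\G_t^*$ with $\lambda(b)=b\cup t$ but $\lambda(b')=b'$, you would get incompatible lifts; but this cannot happen, since a $\G$-edge from $b$ into $t$ is a fortiori a $\G$-edge from $b'$ into $t$, forcing $\lambda(b')=b'\cup t$ as well. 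With that monotonicity of ``passing through $t$'' noted, all compatibility checks go through.
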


We refer to \cite{CD}, \cite{Deva} and \cite{DeFo} for more descriptions of the polytopes ${\mathcal K}\G$.
 \bigskip

\subsection{The pre-Lie coproduct on tubings}
\medskip

 We denote by ${\mbox {\bf Tub}}$ the $\K$-vector space spanned by all the pairs $(\G, T)$, where $\G$ is a finite connected simple graph, equipped with a total order on the its set of nodes, and $T$ is a tubing of $\G$.

Let us recall the definition of right pre-Lie algebra (also called right symmetric algebra). For a complete description of the properties and examples of this type of algebras we refer to D. Burde\rq s work \cite{Bur}.

\begin{definition} \label{rightpreLie} A {\it right pre-Lie}, or {\it right symmetric}, algebra is a vector space $S$ equipped with a binary product $\bullet: S\otimes S\longrightarrow S$, satisfying that:
\begin{equation*}x\bullet (y\bullet z)\ {\mbox{-}}\ (x\bullet y)\bullet z = y\bullet (x\bullet z)\ {\mbox{-}}\ (y\bullet x)\bullet z,\end{equation*}
for any elements $x,y,z\in S$. \end{definition}

Clearly, any associative algebra is a right pre-Lie algebra. Moreover, the binary operation $[ x, y] := x\bullet y\ {\mbox{-}}\ y\bullet x$ defines a Lie bracket on $S$.
\medskip

Dualizing the notion of right pre-Lie algebra, we get that a right pre-Lie coalgebra is a vector space $C$ equipped with a coproduct $\Delta_{\bullet}: V\longrightarrow V\otimes V$ satisfying the relation:
\begin{equation*}(Id_C\otimes \Delta_{\bullet}\ {\mbox{-}}\ \Delta_{\bullet}\otimes Id_C)\circ \Delta_{\bullet} (x) = (\tau\otimes Id_C)\circ (Id_C\otimes \Delta_{\bullet}\ {\mbox{-}}\ \Delta_{\bullet}\otimes Id_C)\circ \Delta_{\bullet} (x),\end{equation*}
for any $x\in C$, where $\tau$ denotes the twisting $\tau(x\otimes y) := y\otimes x$ on $C\otimes C$.
\medskip

Theorem \ref{faces} induces the following definition.

\begin{definition} \label{coproduct} Let $T$ be a tubing of a connected graph $\G$, define $\Delta_{\bullet} (T)\in {\mbox {\bf Tub}^+}\otimes {\mbox {\bf Tub}^+}$ as follows:
\begin{equation*}\Delta_{\bullet}(T):=\sum _{t\in T}T\vert_t\otimes T_t^*,\end{equation*}
where the sum is taken over all the tubes $t$ in $T$ and on the empty tube, which is identified with the unit $1_{\K}$ of $\K$. We define completely the coproduct  $\Delta_{\bullet}$ on the vector space ${\mbox {\bf Tub}^+}$ by setting that $\Delta_{\bullet}(1_{\K}) = 1_{\K}\otimes 1_{\K}$, and extending it by linearity.\end{definition}

\begin{lemma}\label{preLiecoprod} The coproduct $\Delta_{\bullet}$ defines a right pre-Lie coalgebra structure on ${\mbox {\bf Tub}}$.\end{lemma}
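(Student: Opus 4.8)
The plan is to verify the pre-Lie co-identity directly by computing both sides of the defining relation applied to a tubing $T$, and matching the resulting sums of triple tensors term by term. Write $\Delta_{\bullet}(T) = \sum_{t\in T} T\vert_t \otimes T_t^*$, where the sum also includes the empty tube contributing $1_{\K}\otimes T$. Applying $(Id\otimes\Delta_{\bullet} - \Delta_{\bullet}\otimes Id)\circ\Delta_{\bullet}$ to $T$ produces two families of triple tensors: from $Id\otimes\Delta_{\bullet}$ we get terms indexed by a tube $t\in T$ together with a tube $s'$ of the tubing $T_t^*$ on the reconnected complement $\G_t^*$; from $\Delta_{\bullet}\otimes Id$ we get terms indexed by $t\in T$ together with a tube $s$ of the restricted tubing $T\vert_t$ on $\G_t$. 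The first step is to re-index both families in terms of \emph{pairs of tubes} of $T$ itself. Using Remark~\ref{rem:restinduc}, a tube $s'$ of $T_t^*$ is either a tube of $T$ disjoint from $t$, or the trace on $\mathrm{Nod}(\G)\setminus t$ of a tube of $T$ strictly containing $t$; and a tube $s$ of $T\vert_t$ is exactly a tube of $T$ contained in $t$. So every term on the left-hand side is governed by an ordered pair $(t,u)$ of tubes of $T$ (allowing the empty tube), together with the compatibility relation between them (nested one way, nested the other way, or far apart).

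The key computational step is then a case analysis on the mutual position of the pair $\{t,u\}$ of tubes of $T$ — recall that since $T$ is a tubing, any two of its tubes are compatible, hence either nested or far apart. For a \emph{far apart} pair $\{t,u\}$ with, say, $t$ and $u$ disjoint and non-adjacent: $u$ survives as a tube of $T_t^*$ and $t$ survives as a tube of $T_u^*$, and one checks that $(T_t^*)\vert_{u}$, $(T_t^*)_u^*$ etc. can be rewritten symmetrically in $t$ and $u$. This produces exactly a pair of triple tensors that are swapped by $\tau\otimes Id$ in the first two slots, which is precisely what the right-hand side $(\tau\otimes Id)\circ(Id\otimes\Delta_{\bullet}-\Delta_{\bullet}\otimes Id)\circ\Delta_{\bullet}(T)$ demands. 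For a \emph{nested} pair, say $u\subsetneq t$: such a pair contributes to $\Delta_{\bullet}\otimes Id$ (via $u\in T\vert_t$) but the ``reconnected'' contributions $t\in T_u^*$-type terms have $t$ replaced by its trace $t\setminus u$; one verifies the identifications
\begin{equation*}
(T\vert_t)\vert_u = T\vert_u, \qquad (T\vert_t)_u^* = (T_u^*)\vert_{t\setminus u}, \qquad (T_u^*)_{t\setminus u}^* = T_t^*,
\end{equation*}
so the nested contributions on the two sides of the $Id\otimes\Delta_{\bullet}-\Delta_{\bullet}\otimes Id$ difference cancel against each other, and — being symmetric under neither swap nor fixed by $\tau$ in a way that breaks — they also match up with the right-hand side. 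Finally one handles the degenerate cases where $t$, $u$, or both is the empty tube (these give the $1_{\K}$ terms and are straightforward), and the ``diagonal'' case $t=u$.

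The main obstacle I expect is bookkeeping: correctly identifying, for each ordered pair of tubes of $T$, which of the four summand-producing operations ($Id\otimes\Delta_{\bullet}$ vs. $\Delta_{\bullet}\otimes Id$, on the left vs. after applying $\tau\otimes Id$ on the right) it contributes to, and checking the three displayed restriction/reconnected-complement identities above really hold at the level of tubings (not just node sets) — this is where Remark~\ref{rem:restinduc} and the precise definition of $\G_t^*$ in Definition~\ref{connectedcomplement} do the real work. Once the re-indexing by pairs of tubes is set up cleanly, the pre-Lie relation reduces to the observation that the ``far apart'' pairs are exactly the terms exchanged by $\tau$ while the ``nested'' pairs appear identically on both sides and hence contribute zero to the difference of the two sides; there is no genuine geometric difficulty beyond Theorem~\ref{faces}, which already packages the relevant product decomposition.
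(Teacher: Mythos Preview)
Your proposal is correct and takes essentially the same approach as the paper. The paper's proof simply asserts that
\[
(Id\otimes\Delta_{\bullet}-\Delta_{\bullet}\otimes Id)\circ\Delta_{\bullet}(T)=\sum_{\substack{t_1,t_2\in T\\ t_1\cap t_2=\emptyset}} T\vert_{t_1}\otimes T\vert_{t_2}\otimes T_{\{t_1,t_2\}}^*
\]
and observes the manifest symmetry under $(t_1,t_2)\mapsto(t_2,t_1)$; your case analysis (nested pairs cancel inside the associator, far-apart pairs give the symmetric sum) is exactly the unpacking of that ``easy to see'' step, with Lemma~\ref{separatedtubes} supplying the symmetry of the double reconnected complement.
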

\medskip

\begin{proof} It is easy to see that
\begin{equation*}(Id_C\otimes \Delta_{\bullet}\ {\mbox{-}}\ \Delta_{\bullet}\otimes Id_C)\circ \Delta_{\bullet} (T) =\sum _{{t_1,t_2\in T}\atop{t_1\cap t_2=\emptyset}} T\vert_{t_1}\otimes T\vert_{t_2}\otimes
T_{\{t_1,t_2\}}^*,\end{equation*} where the sum is taken over all pairs of non-empty tubes $(t_1,t_2)$ of $T$ satisfying that $t_1\cap t_2$ is empty.

Clearly, if the pair $(t_1,t_2)$ satisfies that $t_1\cap t_2 = \emptyset$, then the pair $(t_2,t_1)$ also satisfies the condition, which implies the result.\end{proof}
\bigskip

\section{Substitution on tubings}

The definition of operad involves the composition of operations, which may be described by grafting the root of a colored tree in a leaf of another one. In particular, non-symmetric operads (also called Gerstenhaber pre-Lie systems) are based on grafting of plane rooted trees. This notion was generalized to surjective maps in \cite{LR}, and resulted in descriptions of some particular families of shuffle operads, introduced by V. Dotsenko and A. Koroshkin in \cite{DK}, called permutads. Plane binary trees are in bijection with the tubings of linear graphs, and surjective maps with the tubings
of complete graphs, so in these cases we may describe grafting in terms of tubings. In fact, this notion may be  defined on the faces of graph associahedra in a very natural way; we call this operation {\it substitution}.
\medskip

\subsection{Basic definitions}
\medskip

We want to formalize the notion of composing tubes, into a tubing. In order to do that properly we need some previous results.
\medskip

\begin{lemma} \label{separatedtubes} Let $\G$ be a graph. For any pair of tubes $t$ and $t\rq$ in $\G$, we have that $(\G_t^*)_{t\rq}^* = (\G_{t\rq}^*)_t^*$.\end{lemma}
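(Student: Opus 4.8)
The plan is to reduce the identity to a comparison of edge sets and then to give a single, manifestly symmetric description of those edges. First note that $(\Gamma_t^*)_{t'}^*$ only makes sense when $t\cap t'=\emptyset$ (its node set is $\mathrm{Nod}(\Gamma)\setminus(t\cup t')$), and that $t'$ is indeed a tube of $\Gamma_t^*$: the reconnected complement only \emph{adds} edges, so the induced subgraph on $t'$ still contains the connected graph $\Gamma_{t'}$ and hence is connected. Thus $(\Gamma_t^*)_{t'}^*$ is of the form covered by Definition~\ref{connectedcomplement}, and symmetrically so is $(\Gamma_{t'}^*)_t^*$; both have node set $\mathrm{Nod}(\Gamma)\setminus(t\cup t')$. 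So it suffices to show that a pair $\{v,w\}$ of nodes outside $t\cup t'$ is an edge of the first graph if, and only if, it is an edge of the second.

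The key claim I would prove is: \emph{$\{v,w\}$ is an edge of $(\Gamma_t^*)_{t'}^*$ if, and only if, there is a walk in $\Gamma$ from $v$ to $w$ (a sequence of nodes, consecutive ones adjacent in $\Gamma$) all of whose interior nodes lie in $t\cup t'$.} This condition is visibly unchanged when $t$ and $t'$ are interchanged, so once it is established the lemma is immediate. To prove the claim I would first record the one-step version: for a tube $s$ and nodes $v,w\notin s$, the pair $\{v,w\}$ is an edge of $\Gamma_s^*$ iff there is a walk from $v$ to $w$ in $\Gamma$ with all interior nodes in $s$. The backward direction is immediate from Definition~\ref{connectedcomplement}: a length-one walk gives alternative (a), and a longer walk gives alternative (b) by reading off its first and last interior nodes as the required elements of $s$. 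The forward direction uses that $s$, being a tube, induces a connected subgraph, so the contact nodes $u,u'\in s$ appearing in alternative (b) can be joined by a path inside $s$; splicing that path between the edges $\{v,u\}$ and $\{w,u'\}$ produces the walk. This use of connectivity of tubes is exactly the place where the tube hypothesis is needed, and keeping track of it is essentially the only point of care.

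I would then iterate. If $\{v,w\}$ is an edge of $(\Gamma_t^*)_{t'}^*$, then by Definition~\ref{connectedcomplement} either it is already an edge of $\Gamma_t^*$, in which case the one-step result gives a walk through $t\subseteq t\cup t'$; or there are $u,u'\in t'$ with $\{v,u\}$ and $\{w,u'\}$ edges of $\Gamma_t^*$, in which case the one-step result gives walks from $v$ to $u$ and from $w$ to $u'$ through $t$, and concatenating them with a path from $u$ to $u'$ inside the tube $t'$ yields a walk from $v$ to $w$ through $t\cup t'$. Conversely, given such a walk: if no interior node lies in $t'$, then all interior nodes lie in $t$, so $\{v,w\}$ is an edge of $\Gamma_t^*$, hence of $(\Gamma_t^*)_{t'}^*$; otherwise let $u$ and $u'$ be the interior nodes in $t'$ closest to $v$ and to $w$ respectively, and observe that the sub-walks from $v$ to $u$ and from $w$ to $u'$ have all their interior nodes in $t$ (a node of the big walk lying before $u$ or after $u'$ is interior, hence in $t\cup t'$, and cannot be in $t'$ by the choice of $u,u'$). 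By the one-step result $\{v,u\}$ and $\{w,u'\}$ are then edges of $\Gamma_t^*$ with $u,u'\in t'$, so Definition~\ref{connectedcomplement}(b) exhibits $\{v,w\}$ as an edge of $(\Gamma_t^*)_{t'}^*$. This proves the claim and hence the lemma. (A more pedestrian alternative is to unwind the definition of reconnected complement twice by hand, obtaining a disjunction of a handful of adjacency conditions among $v$, $w$, $t$, $t'$, and to check that this disjunction is invariant under $t\leftrightarrow t'$; that works too, but the walk description is cleaner and explains why the identity holds.)
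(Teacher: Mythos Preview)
Your proof is correct and follows the same overall strategy as the paper: both arguments observe that the two graphs have node set ${\mbox{Nod}(\G)}\setminus(t\cup t')$ and then give a description of the edge set that is manifestly symmetric in $t$ and $t'$. The paper carries this out exactly via the ``more pedestrian alternative'' you mention at the end, unwinding Definition~\ref{connectedcomplement} twice into a four-case disjunction and noting its invariance under $t\leftrightarrow t'$; your walk characterization is a cleaner repackaging of the same content. You are also more careful than the paper in two places: you make explicit the implicit hypothesis $t\cap t'=\emptyset$, and you verify that $t'$ is still a tube in $\G_t^*$ before forming the second reconnected complement.
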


\begin{proof} Clearly the sets of nodes of the graphs $(\G_t^*)_{t\rq}^*$ and $(\G_{t\rq}^*)_t^*$ are both equal to ${\mbox {Nod}(\G)}\setminus (t\cup t\rq)$, where $\cup $ denotes the union.

On the other hand, for any pair of nodes $v$ and $w$ in ${\mbox {Nod}(\G)}\setminus (t\cup t\rq)$, we have that the pair $(v, w)$ is an edge of $(\G_t^*)_{t\rq}^*$ if, and only if, at least one of the following situations is fulfilled:\begin{enumerate}
\item $(v, w)$ is an edge in $\G$,
\item there exist edges $(v, v\rq)$ and $(w, w\rq)$ in $\G$ such that $v\rq$ and $w\rq$ belong to $t$,
\item there exist edges $(v, v\rq)$ and $(w, w\rq)$ in $\G$ such that $v\rq$ and $w\rq$ belong to $t\rq$,
\item there exist an edge $(u,z)$ in $\G$ such that $u\in t$ and $z\in t\rq$, and edges $(v, v\rq)$ and $(w, w\rq)$ in $\G$ satisfying that either $v\rq\in t$ and $w\rq\in t\rq$, or
$v\rq \in t\rq$ and $w\rq \in t$.\end{enumerate}

Note that, exchanging $t$ and $t\rq$, we get that the edges of $(\G_{t\rq}^*)_t^*$ are determined in the same way, so both graphs coincide.\end{proof}
\medskip

Lemma \ref{separatedtubes} allows us to introduce the following definition.

\begin{definition}\label{maxtubings} Let ${\mbox{Maxt}(T)}=\{t^1,\dots ,t^m\}$ be the set of maximal proper tubes of a tubing $T$ in $\G$. We denote by $\G_{T}^*$ the graph $(((\dots (\G_{t^1}^*)_{t^2}^*)\dots )_{t^m}^*$ obtained by iterating the construction of the reconnected complement. \end{definition}
\medskip

The proof of the following result is immediate.

\begin{lemma} \label{rem:restrictionreconnected} For any pair of graphs $\G$ and $\Omega$ with the same number of nodes, satisfying that ${\mbox {Edg}(\Omega)}\subseteq {\mbox {Edg}(\G)}$, using the same notations that in Remark \ref{rem:restriction}, we have that:\begin{enumerate}
\item For any tube $t$ in $\G$, the graphs $\G_t^*$ and $\Omega _{\mbox {res}_{\Omega}^{\G}(t)}^*$ have the same number of nodes, and ${\mbox {Edg}(\Omega _{\mbox {res}_{\Omega}^{\G}(t)}^*)}\subseteq {\mbox {Edg}(\G_t^*)}$.
\item For any tube $t$ in $\G$ and any tubing $S$ of $\G_t^*$, $S$ induces a tubing of $\Omega _{\mbox {res}_{\Omega}^{\G}(t)}^*$, denoted ${\mbox {res}_{\Omega}^{\G}}(S)_t^*$.
\end{enumerate}\end{lemma}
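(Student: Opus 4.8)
\textbf{Proof plan for Lemma \ref{rem:restrictionreconnected}.}

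The plan is to unwind the definitions directly, using the explicit description of edges in a reconnected complement from Definition \ref{connectedcomplement}. First I would fix a tube $t$ in $\G$ and write $t' = {\mbox {res}_{\Omega}^{\G}}(t)$, recalling from Remark \ref{rem:restriction} that the restriction map simply forgets the edges that $\G$ has but $\Omega$ lacks, so $t$ and $t'$ have the same underlying set of nodes (and $t'$ is indeed a tube in $\Omega$ since connectivity can only be lost, but $t'$ being the image of a tube is automatically connected in $\Omega$ — actually this needs the observation that ${\mbox {res}}$ sends tubes to tubes, which is part of Remark \ref{rem:restriction}). Consequently $\G_t^*$ and $\Omega_{t'}^*$ both have node set ${\mbox {Nod}(\G)}\setminus t$, giving the cardinality claim in part (1). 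For the edge inclusion in part (1), I would take an edge $\{v,w\}$ of $\Omega_{t'}^*$ and check, via the two conditions (a),(b) of Definition \ref{connectedcomplement}, that it is also an edge of $\G_t^*$: if $\{v,w\}\in {\mbox {Edg}(\Omega)}$ then $\{v,w\}\in {\mbox {Edg}(\G)}$ since ${\mbox {Edg}(\Omega)}\subseteq {\mbox {Edg}(\G)}$; and if there are $u,u'\in t$ with $\{v,u\},\{w,u'\}\in {\mbox {Edg}(\Omega)}$, then again these edges lie in ${\mbox {Edg}(\G)}$, so condition (b) for $\G_t^*$ holds. Hence ${\mbox {Edg}(\Omega_{t'}^*)}\subseteq {\mbox {Edg}(\G_t^*)}$.

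For part (2), having established in part (1) that $\G_t^*$ and $\Omega_{t'}^*$ are two graphs on the same node set with ${\mbox {Edg}(\Omega_{t'}^*)}\subseteq {\mbox {Edg}(\G_t^*)}$, the hypotheses of Remark \ref{rem:restriction} are met for the pair $(\G_t^*, \Omega_{t'}^*)$ in place of $(\G,\Omega)$. I would then simply apply the restriction map ${\mbox {res}}_{\Omega_{t'}^*}^{\G_t^*}$ from that remark to the tubing $S$ of $\G_t^*$, obtaining a tubing of $\Omega_{t'}^*$, which is exactly the claimed ${\mbox {res}_{\Omega}^{\G}}(S)_t^*$. In fact one should note the notation is being \emph{defined} here: ${\mbox {res}_{\Omega}^{\G}}(S)_t^* := {\mbox {res}}_{\Omega_{t'}^*}^{\G_t^*}(S)$.

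There is essentially no obstacle; the only point requiring a moment's care is the claim that ${\mbox {res}_{\Omega}^{\G}}$ carries the tube $t$ to a tube $t'$ of $\Omega$ with the same node set — one must be sure the restriction map is defined node-set-wise on tubes (it is, by Remark \ref{rem:restriction}), so that the node sets of $\G_t^*$ and $\Omega_{t'}^*$ genuinely agree and part (1) makes sense before part (2) can even be stated. Everything else is a direct translation through Definitions \ref{connectedcomplement} and Remark \ref{rem:restriction}, which is why the authors (correctly) call the proof immediate.
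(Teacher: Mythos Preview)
Your overall strategy (unwind Definition \ref{connectedcomplement} and invoke Remark \ref{rem:restriction}) is exactly what the paper has in mind, but there is a real gap in your execution: you assume that $t':={\mbox {res}_{\Omega}^{\G}}(t)$ is a single tube of $\Omega$ with the same node set as $t$. That is false in general. Restriction can destroy connectivity (you even say ``connectivity can only be lost'' and then contradict yourself), so ${\mbox {res}_{\Omega}^{\G}}(t)$ is typically a \emph{tubing} $\{t_1',\dots,t_k'\}$ consisting of the connected components of the subgraph of $\Omega$ induced on the nodes of $t$; compare the example after Remark \ref{rem:restriction}, where the tube $\{1,3\}$ in $K_3$ restricts to the two tubes $\{1\},\{3\}$ in $L_3$, and Proposition \ref{lem:restrictiongamma}, which writes ${\mbox {res}_{\Omega}^{\G}}(t)=\{t_1,\dots,t_k\}$ explicitly. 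Consequently $\Omega_{{\mbox {res}_{\Omega}^{\G}}(t)}^*$ must be read via Definition \ref{maxtubings} as the iterated reconnected complement $(((\Omega_{t_1'}^*)_{t_2'}^*)\cdots)_{t_k'}^*$, not via the single-tube Definition \ref{connectedcomplement}.

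The repair is short. The union of the node sets of the $t_i'$ is the node set of $t$, so the node-count claim in (1) survives. For the edge inclusion, observe that the $t_i'$ are pairwise far apart in $\Omega$ (they are connected components), so taking the iterated complement never creates an edge by chaining through two different $t_i'$'s; hence $\{v,w\}$ is an edge of $\Omega_{\{t_1',\dots,t_k'\}}^*$ iff either $\{v,w\}\in{\mbox {Edg}(\Omega)}$ or there exist $u,u'\in t_i'$ for a single $i$ with $\{v,u\},\{w,u'\}\in{\mbox {Edg}(\Omega)}$. In either case ${\mbox {Edg}(\Omega)}\subseteq{\mbox {Edg}(\G)}$ and $t_i'\subseteq t$ give the corresponding condition (a) or (b) of Definition \ref{connectedcomplement} for $\G_t^*$, so the inclusion holds. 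Part (2) then follows exactly as you wrote, by applying Remark \ref{rem:restriction} to the pair $(\G_t^*,\Omega_{{\mbox {res}_{\Omega}^{\G}}(t)}^*)$.
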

\medskip

\begin{definition}\label{defn:inductions} Let $\G$ be a graph and let $T$ and $T\rq$ be tubings of $\G$, and let $t\in T$ be a tube. \begin{enumerate}
\item Let $S$ be a tubing of $\G_t$ which is compatible with $T\vert _t$. Define $T\circ _tS$ as the set of tubes $s$ satisfying that $s\in T$, or $s\in S$ (considered as a tube in $\G$).
\item Let $S$ be a tubing of $\G_t^*$ which is compatible with $T_t^*$. A tube $s$ in $\G$ belongs to the set of tubes $T\diamond S$ if it satisfies one of the following conditions\begin{enumerate}
\item $s\in T$,
\item $s \in S$ and, as a tube in $\G$, $s$ is not linked to $t$,
\item $s = s\rq \cup t$, for some $s\rq \in S$ such that $s'$ is the union of some tubes all linked to $t$ in $\G$.\end{enumerate}
\end{enumerate} \end{definition}

\begin{lemma} \label{lem:inductubings} Let $T$ be a tubing of a graph $\G$, and let $t$ be a tube of $T$. We have that:\begin{enumerate}
\item if $S$ is a tubing of $\G_t$ compatible with $T\vert _t$, then $T\circ_t S$ is a tubing of $\G$,
\item if $S$ is a tubing of $\G_t^*$ compatible with $T_t^*$, then $T\diamond S$ is a tubing of $\G$.\end{enumerate}\end{lemma}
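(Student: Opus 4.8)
The plan is to verify, for each of the two constructions, that the output family of tubes satisfies the two defining requirements of Definition~\ref{defntubing}: (a) it contains the universal tube $t_\G$, and (b) every pair of tubes in it is compatible. Requirement (a) is immediate in both cases: in $T\circ_t S$ the universal tube comes along with $T$ itself (since $t_\G\in T$), and similarly in $T\diamond S$ we have $t_\G\in T$, and adding $t$ to a tube $s'$ as in case (c) never removes $t_\G$. So the whole content is checking pairwise compatibility, and I would organize it by the origin of the two tubes in the pair.

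For part (1), $T\circ_t S$, take two tubes $s_1,s_2$ in the union. If both lie in $T$ they are compatible since $T$ is a tubing; if both lie in $S$ they are compatible since $S$ is a tubing of $\G_t$ (and a subgraph of $\G$ inherits compatibility, as a tube of $\G_t$ is a tube of $\G$). The only genuine case is $s_1\in T$, $s_2\in S\subseteq$ nodes of $t$. Here I would split on whether $s_1\subseteq t$ or not. If $s_1\subseteq t$, then $s_1$ is (up to the universal tube of $\G_t$) a tube of $T\vert_t$, and compatibility of $s_1$ and $s_2$ follows from the hypothesis that $S$ is compatible with $T\vert_t$, i.e. $T\vert_t\cup S$ is a tubing. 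If $s_1\not\subseteq t$: either $s_1\cap t=\emptyset$, in which case $s_2\subseteq t$ is disjoint from $s_1$, and since $t$ is compatible with $s_1$ (both in $T$) and $s_1\cap t=\emptyset$ forces $t$ and $s_1$ far apart, no node of $t$ is adjacent to a node of $s_1$, so $s_2$ and $s_1$ are far apart, hence compatible; or $s_1\cap t\neq\emptyset$ and $s_1\not\subseteq t$, in which case compatibility of $s_1$ and $t$ in $T$ forces $t\subseteq s_1$, so $s_2\subseteq t\subseteq s_1$ and the two are nested. In all cases the pair is compatible.

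For part (2), $T\diamond S$, the argument is analogous but the reconnected complement bookkeeping is the delicate point. Tubes of $T\diamond S$ come in three flavors as in Definition~\ref{defn:inductions}(2): (a) tubes of $T$; (b) tubes $s\in S$ not linked to $t$; (c) tubes of the form $s'\cup t$ with $s'$ a union of tubes of $S$ each linked to $t$. I would first record the basic fact (from Remark~\ref{rem:restinduc}(2) and Lemma~\ref{separatedtubes}) describing how tubes of $T_t^*$ sit inside $\G$: a tube of $T_t^*$ is either a tube of $T$ disjoint from $t$, or the trace on $\mathrm{Nod}(\G)\setminus t$ of a tube of $T$ containing $t$; and conversely a tube $s$ of $\G_t^*$ lifts to the tube $s$ of $\G$ when $s$ is not linked to $t$, or to $s\cup t$ when $s$ (more precisely its connected pieces) is linked to $t$ — this is exactly what flavors (b) and (c) encode, so that $T\diamond S$ is really ``$T$ together with the lift of $S$.'' Then I check the pairs: two $T$-tubes are fine; a $T$-tube and a lifted $S$-tube are compatible because the corresponding tube of $T_t^*$ and tube of $S$ are compatible in $\G_t^*$ (as $T_t^*\cup S$ is a tubing by hypothesis) and lifting preserves the nested/far-apart dichotomy relative to how each meets $t$ — here one uses that a $T$-tube either is disjoint from $t$, contains $t$, or is contained in $t$, and in each case traces down correctly to $\G_t^*$; finally two lifted $S$-tubes $\hat s_1,\hat s_2$ are compatible because $s_1,s_2$ are compatible in $\G_t^*$ and adding $t$ to those that are linked to it preserves compatibility (if $s_1\subseteq s_2$ and we add $t$ to both, still nested; if we add $t$ only to $s_2$, then $s_1$ not linked to $t$ means $s_1$ and $t$ far apart or $s_1\subseteq t$, so $s_1$ and $s_2\cup t$ are still far apart or nested; if $s_1,s_2$ far apart in $\G_t^*$ and both get $t$ adjoined, they become nested inside... no — rather, if both are linked to $t$ and far apart in $\G_t^*$, then $s_1\cup t$ and $s_2\cup t$ share $t$, and one must check their union $s_1\cup s_2\cup t$ is connected, which it is since each $s_i\cup t$ is connected and they share $t$, so they are not far apart, but they intersect in $t$; I then need $t$ itself, or $s_1\cup t\cap s_2\cup t = t$, to be a tube, which it is — so the pair is ``intersecting with intersection a tube,'' and I must confirm this still counts as compatible, which it does only if one contains the other; so actually I must argue $s_1$ and $s_2$ cannot both be linked to $t$ and far apart unless nested — because in $\G_t^*$ a component of $s_1$ linked to $t$ becomes adjacent, via the reconnected complement edges through $t$, to a component of $s_2$ linked to $t$, contradicting far-apartness in $\G_t^*$). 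This last subcase is the main obstacle: carefully tracking which pairs of $S$-tubes can simultaneously be linked to $t$ and correctly invoking the reconnected-complement edge condition to rule out the problematic configuration. Once that is pinned down, all pairwise compatibilities hold and $T\diamond S$ is a tubing of $\G$.
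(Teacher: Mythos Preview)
Your proof is correct and follows the same strategy as the paper's: check pairwise compatibility by casing on the origin of the two tubes (in $T$ or lifted from $S$, and for part~(2) further on whether the $S$-tube is linked to $t$). Your case analysis is in fact more thorough than the paper's terse sketch --- in particular, the paper does not explicitly treat the pair of two type-(c) tubes $s_1'\cup t$, $s_2'\cup t$, which you correctly dispatch by observing that any two $S$-tubes both linked to $t$ in $\G$ acquire an edge between them in $\G_t^*$ (Definition~\ref{connectedcomplement}(b)) and hence cannot be far apart there, forcing them to be nested. The one slip is the aside that a type-(b) tube $s_1$ might satisfy $s_1\subseteq t$; since $s_1$ lives in $\G_t^*$ its nodes are disjoint from $t$, so that subcase is vacuous and does no harm.
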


\begin{proof} The first point is evident.

For the second one, consider a pair of tubes $s$ and $w$ in $T\diamond S$,\begin{itemize}
\item if $s$ and $w$ are in $T$, then $s$ and $w$ are compatible,
\item if $s$ and $w$ do not belong to $T$ and are not linked to $t$, then both belong to $S$ and they are compatible.
\item if $s= s\rq \cup t \in S$, with $s\rq\in S$ linked to $t$, and $w\in T$, with  $w\cap t = \emptyset$, then $w$ and $s\rq$ are compatible in $\G_t^*$ and $w$ and $t$ are compatible in $\G$. So, $s$ and $w$ are compatible in $\G$.
\item if $s= s\rq \cup t \in S$, with $s\rq\in S$ linked to $t$, and $t\subset w\in T$, then $s\rq \cap (w\setminus t)\neq \emptyset$. As $s\rq$ and $w\setminus t$ are compatible in $\G_t^*$, we get that one of them must be contained
in the other one, and therefore either $s\subset w$ or $w\subset s$ and they are compatible.
\end{itemize}
So, any pair of tubes in $T\diamond S$ is compatible, and the result is proved.\end{proof}

Recall that in Definition \ref{maxtubings}, for any tubing $T$ of $\G$, we have defined $\G_T^*$  as the reconnected complement of $\G$ after taking off all the maximal proper tubes of $T$. The following definition extends the notion of induced tubings to this context.

\begin{definition}\label{defn:substitution} Let $T$ be a tubing of $\G$.\begin{enumerate}
\item Given a tubing $S$ of $\G_T^*$,  define the {\it substitution} of $S$ in $T$ to be the tubing $T\bullet _{{\G}} S$ of $\G$ whose elements are the tubes $t$ satisfying one of the following conditions:\begin{enumerate}
\item $t$ belongs to $T$,
\item $ t = s\coprod t^{i_1}\dots \coprod t^{i_k}$, where $s$ is a tube of $S$ and $\{ t^{i_1},\dots ,t^{i_k}\} $ is the set of maximal proper tubes of $T$ which are linked to at least one of a union of tubes which comprise $s$ in $\G$.\end{enumerate}
\item For any tube $t\in T$ and any tubing $S\in {\mbox {Tub}((\G_t)_{T\vert _{t}}^*)}$, the $t$-{\it substitution} of $S$ in $T$ is the tubing
$T\circ _t (T\vert _t \bullet _{\G_t}S)$ on $\G$. We denote it simply by $\gamma_t(T;S)$. \end{enumerate}\end{definition}

Note that, when $\G$ is connected and $t=t_{\G}$, we get $\gamma_{t_{\G}}(T;S)=T\bullet _{\G}S$. Moreover, if $T=\{ t_{\G}, t\}$ for a proper connected subgraph $t$ of $\G$, we have that $\gamma_t(T;S) = T\diamond S$, for any tubing $S\in \G_t^*$.

Lemma~\ref{separatedtubes} and a recursive argument on Lemma~\ref{lem:inductubings} show that the substitution is well-defined.
\medskip

Before going on, let us give some examples.

\begin{example} $(1)$ Let $\G$ be the linear graph $L_4$.

Consider the tubings $T=\{ \{2\}, \{4\}, \{1,2,3,4\}\}$ of $L_4$ and $S=\{\{1\}, \{1,2\}\}$ of  $(L_4)_T^*$
\medskip

\begin{figure}[h!]
\includegraphics{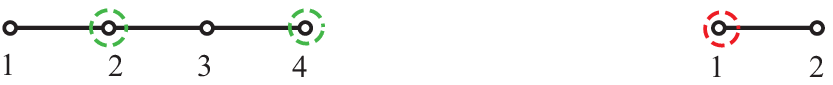}
\end{figure}
\medskip

The tubing $\gamma_{t_{L_4}}(T;S)$ is:
\medskip

\begin{figure}[h]
\includegraphics{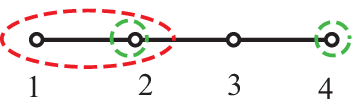}
\end{figure}
\medskip
\newpage
$(2)$ Here are some of the other operations on an arbitrary graph:
\medskip

\includegraphics[width=5.5in]{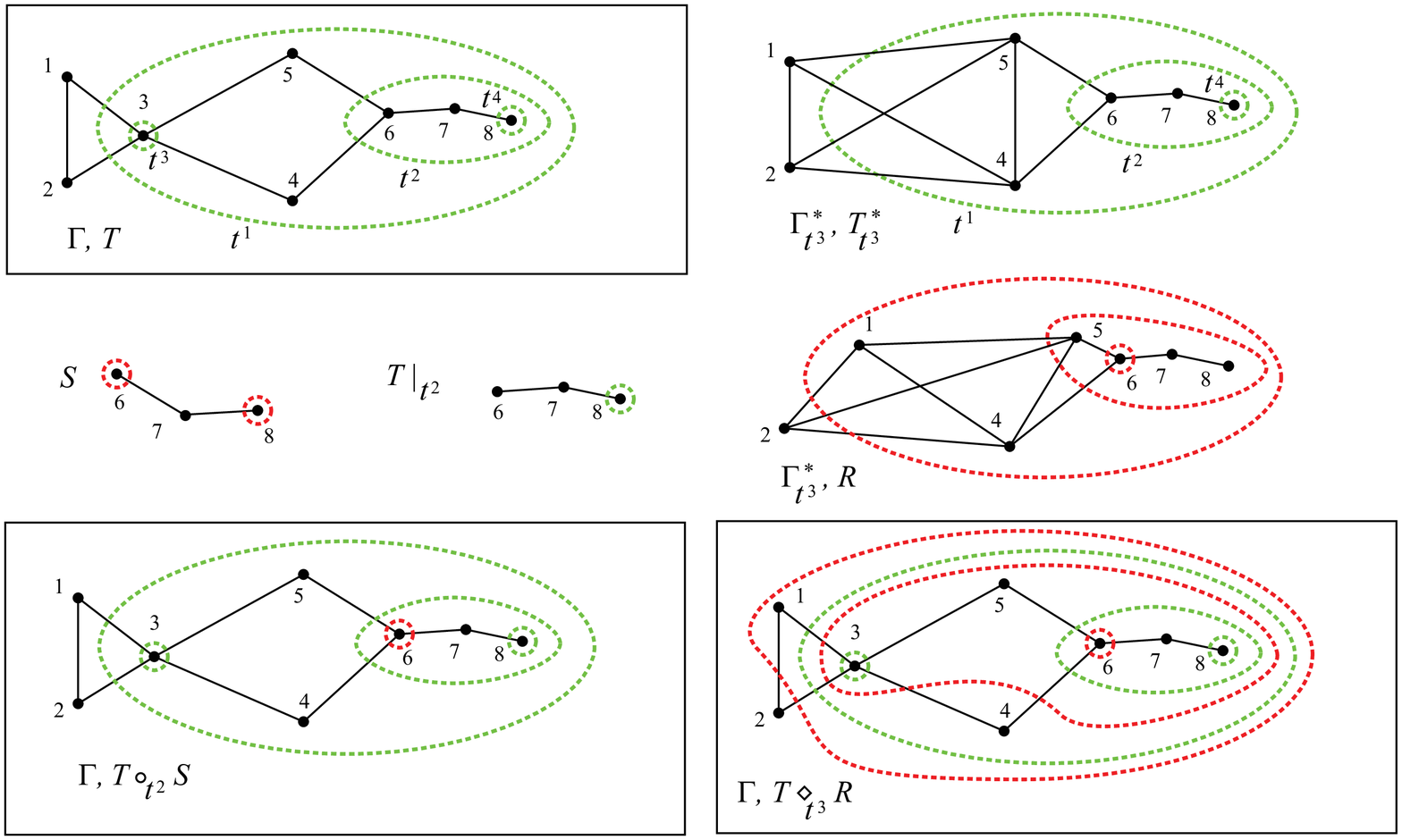}
\medskip

$(3)$ Here is the single substitution on that graph:
\medskip

\includegraphics[width=5in]{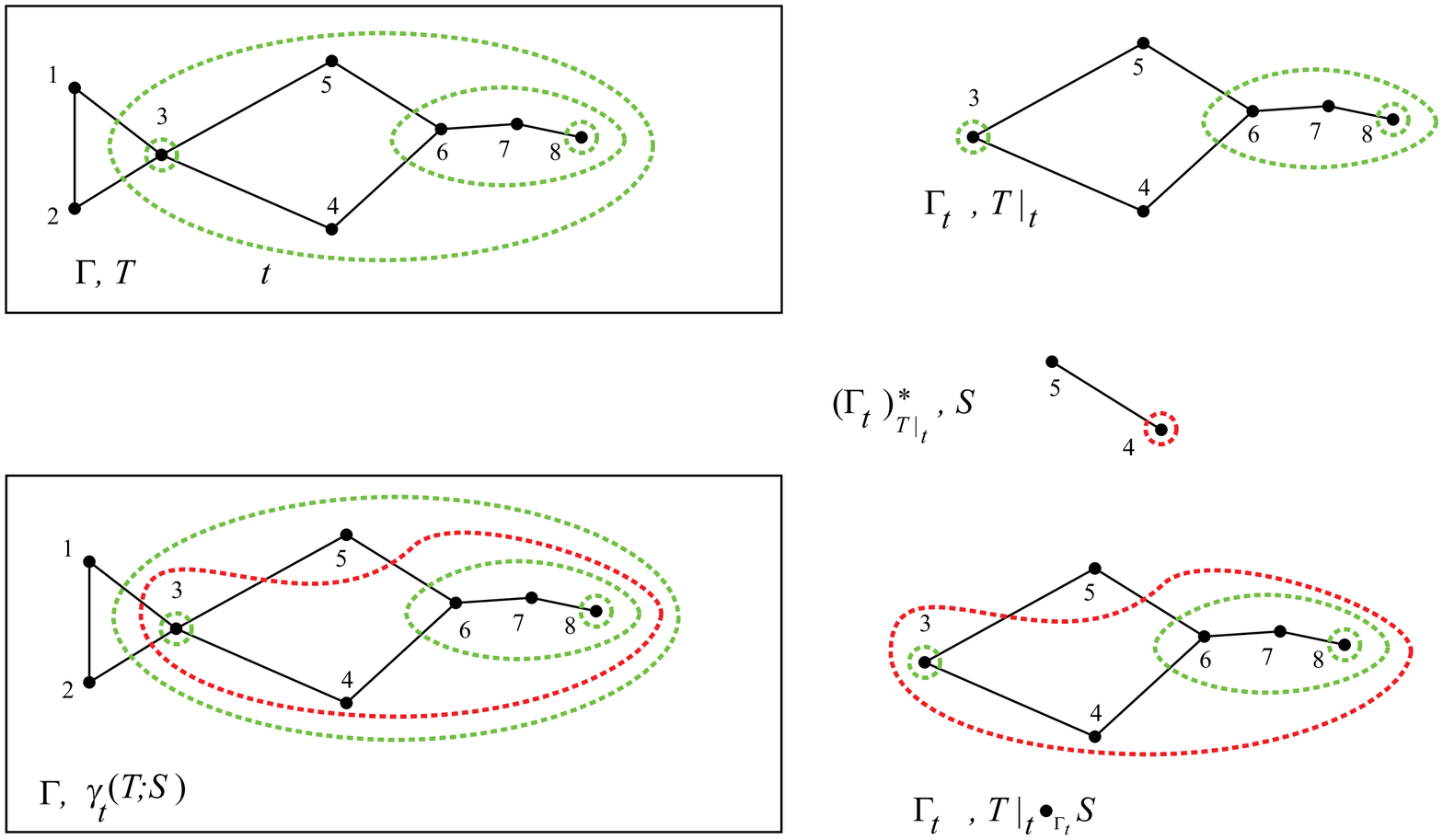}
\medskip

$(3)$ For the complete graph $\Gamma = K_6$. The tubings  $T=\{ \{ 1, 3, 4, 6\},\{ 1, 4\}, t_{K_6} \}$ and $S=\{\{2\}, \{1,2\}\}$ in $(\Gamma _{\{1, 3, 4, 6\}})_{T\vert_{\{1, 3, 4, 6\}}}^* = K_2$, are shown below, followed by the substitution $\gamma _{\{\{1,3,4,6\}\}}(T;S)$:

\begin{table}[h]
\begin{tabular}{ccc}
\includegraphics[width=2.5in]{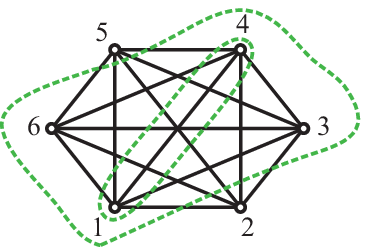}
&&
\includegraphics[width=.75in]{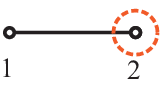}
\end{tabular}
\end{table}
\medskip

\begin{center}
  \includegraphics[width=2.5in]{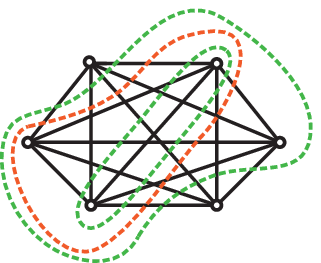}
\end{center}

\bigskip

As there exists a bijection between the set ${\mbox {Tub}(K_6)}$ of tubings of the complete graph $K_6$ and the set
\begin{equation*}{\mbox {\bf ST}_6}:=\{ x:[6]\rightarrow [r]\mid 1\leq r\leq 6\ {\rm and}\ x\ {\rm is\ surjective\}\ },\end{equation*}
we may describe the substitution in terms of surjective maps, as in \cite{LR} .
\medskip

If we draw the surjections determined by $T$ and $S$ as:

\begin{table}[h!]
\begin{tabular}{ccc}
\includegraphics{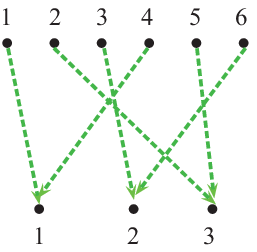}&\qquad\qquad\qquad&
\includegraphics{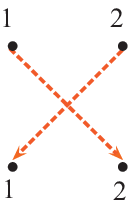}
\end{tabular}
\end{table}
\medskip

then we get that $\gamma_{\{\{1,3,4,6\}\}}(T;S) = (1, 4, 3, 1, 4, 2)$ is given by:
\medskip

\begin{figure}[h!]
\includegraphics{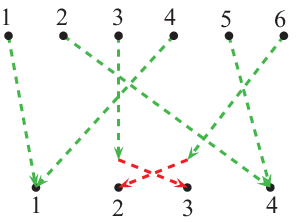}
\end{figure}

\end{example}
\bigskip

\subsection{Associativity of substitution}
\medskip

As in the case of non-symmetric operads, given a tubing $T$ of $\G$, and a family of tubings $S^t\in {\mbox {Tub}((\G_t)_{T\vert_{t}}^*)}$, with $t\in T$, it is possible to define the complete substitution $\gamma (T;\{S^t\}_{t\in T})$.
\medskip

We begin with the following result, whose proof is immediate.
\medskip

\begin{lemma} \label{lemmacomvee} Let $T$ be a tubing in $\G$, and let $t$ and $t\rq $ be a pair of different tubes of $T$. For tubings $S\in {\mbox {Tub}(\G_t)}$ and
$S\rq \in {\mbox {Tub}(\G_{t\rq})}$ such that $S$ is compatible with $T\vert_t$ and $S\rq$ is compatible with $T\circ_tS$, we have that:\begin{enumerate}
\item $S$ is compatible with $T\circ _{t\rq}S\rq$,
\item $(T\circ _t S)\circ _{t\rq }S\rq = (T\circ _{t\rq }S\rq)\circ_t S.$\end{enumerate}\end{lemma}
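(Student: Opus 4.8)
The statement is a commutativity assertion for two \emph{separate} insertions $\circ_t$ and $\circ_{t'}$ into disjoint tubes $t,t'$ of a common tubing $T$. The plan is to verify both items directly from Definition~\ref{defn:inductions}(1), which describes $A\circ_s B$ as a set of tubes: namely $\{u : u \in A\} \cup \{u : u \in B\}$, where the tubes of $B \in {\mbox{Tub}}(\G_s)$ are regarded as tubes of $\G$ via the inclusion $\G_s \hookrightarrow \G$. The key preliminary observation is that, since $t$ and $t'$ are compatible tubes of $T$ and are distinct, the restrictions $T\vert_t$ and $T\vert_{t'}$ live inside the disjoint-or-nested induced subgraphs $\G_t$ and $\G_{t'}$; in either case the tubes of $S \subseteq \G_t$ and of $S' \subseteq \G_{t'}$ only ever interact with $T$ through the part of $T$ sitting inside $t$, respectively $t'$, so adding one family does not disturb the compatibility data needed to add the other.

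First I would handle item (1). The tubes of $T\circ_{t'}S'$ are exactly the tubes of $T$ together with the tubes of $S'$ (viewed in $\G$, hence contained in $t'$). Since $S$ is compatible with $T\vert_t$ by hypothesis, to show $S$ is compatible with $T\circ_{t'}S'$ it suffices to check that each tube $s \in S$ (contained in $t$) is compatible with each tube $s' \in S'$ (contained in $t'$). If $t$ and $t'$ are far apart this is immediate: $s \subseteq t$ and $s' \subseteq t'$ are then far apart as well. If $t \subseteq t'$ (or vice versa), then $S$ is a tubing of $\G_t$, which is an induced subgraph of $\G_{t'}$, and $S$ being compatible with $T\vert_t$ forces compatibility with $S' = $ (the $\G_{t'}$-picture of) $T\circ_t S$ restricted appropriately — here one uses that $S'$ is compatible with $T\circ_t S$, which already contains $S$. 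The bookkeeping is slightly different in the two sub-cases (nested versus far apart), but in each it reduces to the compatibility hypotheses already granted in the lemma's statement together with transitivity of the "contained in" relation.

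For item (2), with item (1) in hand both sides $(T\circ_t S)\circ_{t'}S'$ and $(T\circ_{t'}S')\circ_t S$ are well-defined tubings, and I would simply compute the two sets of tubes. Unwinding the definition, $(T\circ_t S)\circ_{t'}S'$ consists of: the tubes of $T$, plus the tubes of $S$ (inside $t$), plus the tubes of $S'$ (inside $t'$) — because when we perform the second operation $\circ_{t'}$, the tube $t'$ of $T\circ_t S$ is the same tube $t'$ of $T$ (insertion into $t$ does not alter $t'$, as $t'$ is either disjoint from $t$ or contains it), and so $S' \in {\mbox{Tub}}((T\circ_t S)_{t'}) = {\mbox{Tub}}(\G_{t'})$ is inserted identically. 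Symmetrically, $(T\circ_{t'}S')\circ_t S$ is the union of the tubes of $T$, of $S$, and of $S'$. The two sets coincide, proving the equality.

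The main obstacle — really the only point requiring care — is the nested case $t \subseteq t'$ in item (1): here $\G_t$ sits inside $\G_{t'}$, so a tube of $S$ and a tube of $S'$ are both candidates to lie inside $t'$, and one must confirm their compatibility rather than reading it off trivially. The resolution is that the hypothesis "$S'$ is compatible with $T\circ_t S$" already encodes precisely this compatibility, since $T\circ_t S$ contains all tubes of $S$; so one extracts it from there. Everything else is routine unwinding of Definition~\ref{defn:inductions}(1), which is why the lemma is flagged as having an immediate proof.
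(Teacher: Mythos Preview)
Your proposal is correct and matches the paper's treatment: the paper declares the proof ``immediate'' and gives no argument, and what you wrote is exactly the natural unwinding of Definition~\ref{defn:inductions}(1) that makes the immediacy explicit. Your identification of the nested case in item~(1) as the only point requiring the hypothesis on $S'$ (namely, that $S'$ is compatible with $T\circ_t S$, hence already compatible with every tube of $S$) is the right observation, and item~(2) is indeed just the set-theoretic identity $T\cup S\cup S' = T\cup S'\cup S$.
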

\medskip

\begin{proposition} \label{propindepofgammafortubes} Let $T$ be a tubing on $\G$. For any pair of tubes $t,t\rq \in T$ and any pair of tubings
$S\in {\mbox {Tub}((\G_t)_{T\vert _t}^*)}$ and $S\rq\in {\mbox {Tub}((\G_{t\rq})_{T\vert _{t\rq}}^*)}$, we have that:\begin{enumerate}
\item $t\rq$ is a tube of $\gamma_t(T;S)$ and $t$ is a tube of $\gamma_{t\rq}(T;S\rq)$,
\item $\gamma_{t\rq}(\gamma _t(T;S);S\rq)=\gamma _t(\gamma_{t\rq}(T;S\rq);S)$.\end{enumerate}\end{proposition}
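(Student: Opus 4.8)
The plan is to obtain (1) directly from the definition of $\circ_t$, and to reduce (2) to the already-established Lemma~\ref{lemmacomvee} after separating the two possible relative positions of $t$ and $t'$ inside $T$. For (1): by Definition~\ref{defn:substitution}(2) we have $\gamma_t(T;S)=T\circ_t(T\vert_t\bullet_{\G_t}S)$, and by Definition~\ref{defn:inductions}(1) a tubing of the form $T\circ_tW$ contains every tube of $T$ (as a tube of $\G$); hence $t'\in\gamma_t(T;S)$, and symmetrically $t\in\gamma_{t'}(T;S')$. This already makes the two sides of (2) meaningful, once one checks — as the argument for (2) does — that restricting the substituted tubings back to $t'$ (resp.\ $t$) recovers the data over which $\gamma_{t'}$ (resp.\ $\gamma_t$) is taken.

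For (2): as distinct tubes of a single tubing, $t$ and $t'$ are compatible, i.e.\ nested or far apart (Definition~\ref{compattubes}); and since the asserted identity is symmetric under $(t,S)\leftrightarrow(t',S')$, I may assume $t'\subsetneq t$ in the nested case. Write $W:=T\vert_t\bullet_{\G_t}S$ and $W':=T\vert_{t'}\bullet_{\G_{t'}}S'$, so $\gamma_t(T;S)=T\circ_tW$ and $\gamma_{t'}(T;S')=T\circ_{t'}W'$. In the \emph{far apart} case all tubes of $W$ lie in $t$, all tubes of $W'$ lie in $t'$, and $t\cap t'=\emptyset$; so $(T\circ_tW)\vert_{t'}=T\vert_{t'}$ and $(T\circ_{t'}W')\vert_t=T\vert_t$, and — the induced subgraphs $\G_t,\G_{t'}$ and their internal $T$-tubes being untouched, and Lemma~\ref{separatedtubes} making the reconnected complements involved unambiguous — one reads off $\gamma_{t'}(\gamma_t(T;S);S')=(T\circ_tW)\circ_{t'}W'$ and $\gamma_t(\gamma_{t'}(T;S');S)=(T\circ_{t'}W')\circ_tW$, which agree by Lemma~\ref{lemmacomvee} (its hypotheses holding since the new tubes of $W$ and $W'$ are pairwise far apart across the disjoint $t$ and $t'$).

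In the \emph{nested} case $t'\subsetneq t$, I would argue as follows. Each ``type-(b)'' tube of $W=T\vert_t\bullet_{\G_t}S$ contains a node of $(\G_t)^*_{T\vert_t}$, hence a node lying outside every maximal proper tube of $T\vert_t$, in particular outside $t'$ (which, being a tube of $T$ strictly contained in $t$, lies in or equals such a maximal tube); so no new tube of $\gamma_t(T;S)$ sits inside $t'$, whence $(T\circ_tW)\vert_{t'}=T\vert_{t'}$ and the left side of (2) equals $(T\circ_tW)\circ_{t'}W'$. On the other side, the tubes of $T\circ_{t'}W'$ contained in $t$ form the tubing $T\vert_t\circ_{t'}W'$ of $\G_t$, and $\mbox{Maxt}(T\vert_t\circ_{t'}W')=\mbox{Maxt}(T\vert_t)$, since $W'$ only adjoins tubes strictly contained in $t'$ while $t'\in T\vert_t$; as the maximal proper tubes of the $T$-side tubing and their linking data in $\G_t$ are thus unchanged, the type-(b) tubes produced by $(T\vert_t\circ_{t'}W')\bullet_{\G_t}S$ coincide with those produced by $W$. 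Hence $(T\circ_{t'}W')\vert_t\bullet_{\G_t}S$ consists of the tubes of $W$ together with the tubes of $W'$, the latter already lying in $T\circ_{t'}W'$, so the right side of (2) equals $(T\circ_{t'}W')\circ_tW$. A final application of Lemma~\ref{lemmacomvee} to $T,t,t',W,W'$ (whose hypotheses are supplied by the restriction computations just made) then gives $(T\circ_tW)\circ_{t'}W'=(T\circ_{t'}W')\circ_tW$, so the two sides of (2) coincide.

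I expect the real work to be the bookkeeping of the nested case: the identities $(T\circ_tW)\vert_{t'}=T\vert_{t'}$ and $\mbox{Maxt}(T\vert_t\circ_{t'}W')=\mbox{Maxt}(T\vert_t)$ are exactly what force the inner $\bullet$-substitutions on the two sides to produce literally the same new tubes, and verifying the compatibility hypotheses needed to invoke Lemma~\ref{lemmacomvee} in the substituted configurations is a further, smaller, nuisance.
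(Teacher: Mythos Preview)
Your proof is correct and follows essentially the same approach as the paper: both arguments dismiss (1) as immediate from the definition of $\circ_t$, and for (2) both split into the far-apart case $t\cap t'=\emptyset$ and the nested case $t'\subsetneq t$, with the key observations being that substitution at $t$ adds no tubes inside $t'$ (your $(T\circ_tW)\vert_{t'}=T\vert_{t'}$, the paper's $\mbox{Maxt}(\gamma_t(T;S)\vert_{t'})=\mbox{Maxt}(T\vert_{t'})$) and that substitution at $t'$ does not change the maximal proper tubes at the $t$-level (your $\mbox{Maxt}(T\vert_t\circ_{t'}W')=\mbox{Maxt}(T\vert_t)$). Your packaging via Lemma~\ref{lemmacomvee} is a cosmetic difference---that lemma is declared immediate in the paper---and your write-up is in fact more careful than the paper's somewhat telegraphic sketch.
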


\begin{proof} The first assertion is evident.
For the second item, we have to consider the following cases:\begin{enumerate}[(i)]
\item if $t\cap t\rq = \emptyset$, then $S\cap t\rq =\emptyset = S\rq \cap t$. Moreover, as there does not exist an edge $\{ v,w\}$ with $v\in t$ and $w\in t\rq$, neither any node in $t$ is connected with a tube in $S\rq$, nor any node in $t\rq$ is connected to a tube in $S$. The result follows immediately.
\item Suppose that $t\cap t\rq \neq \emptyset$. We may suppose, without loss of generality, that $t\rq \subsetneq t$. In this case, ${\mbox {Maxt}((\gamma_t(T;S)\vert _{t\rq})} = {\mbox {Maxt}(T\vert _{t\rq})}$ and $t\rq $ is contained in some maximal tube of $T\vert _t$ (eventually it is a maximal tube). So, the tubes added by $S\rq$ to $ \gamma _t(T;S)$ are contained in $t\rq$ and appear in $\gamma_{t\rq}(T;S\rq)$. In the same way, $S\rq$ does not add tubes to $\gamma _t(T;S)$ or to $S$, which proves the equality.\end{enumerate}
\end{proof}

\begin{definition} \label{defglobalsubst} Let $T$ be a tubing in $\G$ with $k+1$ tubes, and suppose that the set of tubes of $T$ is given a labelling $T=\{ t^0=t_{\G}, t^1,\dots ,t^k\}$.
For any family of tubings $S^i\in {\mbox {Tub}((\G_{t^i})_{T\vert _{t^i}}^*)}$, $0\leq i\leq k$, define the substitution $\gamma (T;S^0,\dots ,S^k)$ of $S^0,\dots ,S^k$ in $T$ recursively as follows:\begin{enumerate}[(a)]
\item For $T=T_{\G}$ and $S\in {\mbox {Tub}(\G)}$, let $\gamma (T;S):= S$,
\item For $T = \{ t^0=t_{\G}, t^1,\dots ,t^k\}$, define

$\gamma (T;S^0,\dots ,S^k):= \gamma _{t^k}(\gamma (T;S^0,\dots ,S^{k-1});S^k).$\end{enumerate}\end{definition}

 Proposition \ref{propindepofgammafortubes} implies that $\gamma (T; S^0,\dots ,S^k)$ is well defined, for each appropriate labelling of the tubes in $T$.

\begin{remark}
Note that the substitution of tubings $S^i$ into a tubing $T$ of a graph $\G$ results in a tubing $T' = \gamma (T; S^0,\dots ,S^k)$ that contains $T$. This implies two results:
 \begin{enumerate}
     \item The tubing $T'$ resulting from substitution into $T$ corresponds to a sub-face of the face corresponding to $T$ on the graph associahedron ${\mathcal K}\G$. This is implied by Theorem~\ref{faces}.
\medskip

     \item The resulting tubing $T'$ is a basis for a topology which refines the topology generated by $T.$ Moreover, the new topology still has the property that nodes connected by edges are path-connected in the topology, as predicted by Corollary~\ref{paths}. However, there may be pairs of nodes in the topology generated by $T$ that are connected by a path whose range is only those two nodes; but for which, in the topology generated by $T'$, any topological path that connects that pair of nodes has a range that includes additional nodes.  For instance, see the nodes 6 and 8 in the following Example~\ref{genex}.
 \end{enumerate}
\end{remark}

\begin{example}\label{genex}
Here is an example on a given graph $\Gamma$ with labelled tubing $T = \{t_{\G}, t^1,t^2,t^3,t^4\} = \{t_{\G}, \{3,4,5,6,7,8\},\{6,7,8\},\{3\},\{8\}\}$  In this case restricting $\G$ to the tube $t^1$ and then forming the reconnected complement with respect to the tubes inside of $t^1$  results in  the path on two nodes. Restricting $\G$ to the tube $t^2$ and then forming the reconnected complement with respect to the tubes inside of $t^2$ also  results in  the path on two nodes. This highlights the need to have a labelling of the tubes in $T$ so as to match the tubings $S^i$ to their
respective destinations.
\begin{center}
   \includegraphics[width=3.5in]{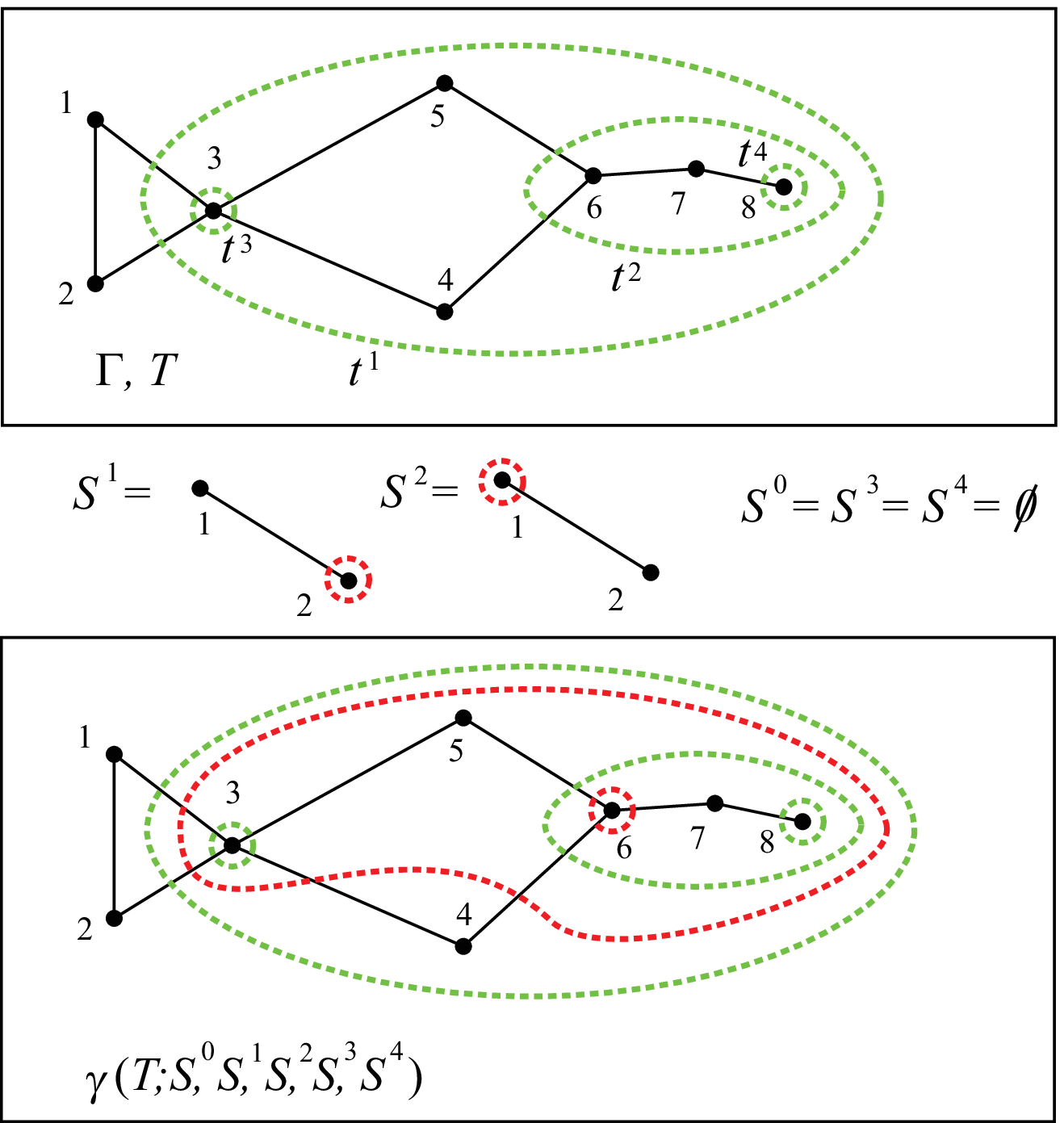}
\end{center}
\end{example}
\newpage

\begin{example}
Consider the complete graph $K_8$ with the tubing as follows:

\begin{table}[h]
\begin{tabular}{ccc}
As tubings in $K_8$&\qquad\qquad\qquad & As surjective maps\\
\includegraphics[width=2in]{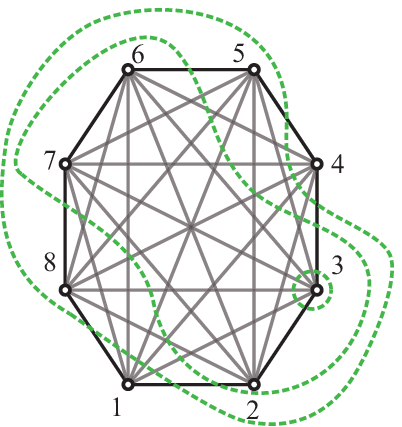}
&\qquad\qquad\qquad&
\includegraphics[width=2in]{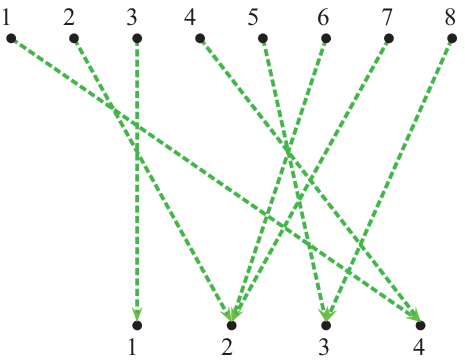}\\
$T=\{t^0,t^1,t^2,t^3\}$\\ $= \{t_{K_8}, \{3\}, \{2,3,6,7\}, \{2,3,5,6,7,8\} \}$&\qquad\qquad\qquad&$x_T=(4,2,1,4,3,2,2,3)$\\
\includegraphics[width=.75in]{subs_p8f2a.eps}
&\qquad\qquad\qquad&
\includegraphics[width=.75in]{subs_p8f2b.eps}\\
$S^0=\{\{2\},\{1,2\}\}$&\qquad\qquad\qquad&$x_{S^0}=(2,1)$\\
\includegraphics[width=1in]{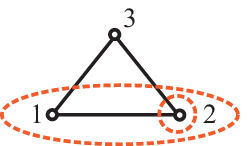}
&\qquad\qquad\qquad&
\includegraphics[width=1in]{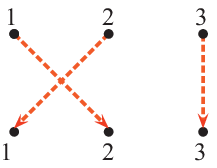}\\
$S^1=T_{\emptyset}$ and $S^2=\{\{2\},\{1,2\},t_{K_3}\}$&\qquad\qquad\qquad&$x_{S^1}=(1)$ and $x_{S^2}=(2,1,3)$\\
\includegraphics[width=.75in]{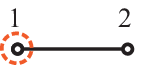}
&\qquad\qquad\qquad&
\includegraphics[width=.75in]{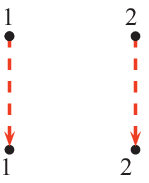}\\
$S^3=\{\{1\},\{1,2\}\}$&\qquad\qquad\qquad&$x_{S^3}=(1,2)$
\end{tabular}
\end{table}

\vfill\eject

the substitution $\gamma (T; S^0,S^1,S^2,S^3)$ is given by:
\medskip

\begin{table}[h]
\begin{tabular}{ccc}
\includegraphics[width=2.5in]{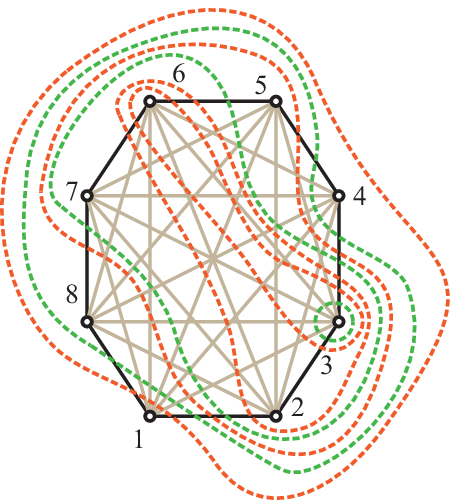}
&
\includegraphics[width=2in]{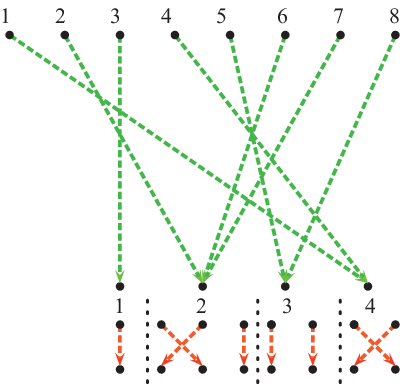}\\
&$x_{\gamma(T;S^0,S^1,S^2,S^3)}= (8,3,1,7,5,2,4,6)$,
\end{tabular}
\end{table}

\begin{align*}
\gamma (&T; S^0, S^1, S^2, S^3)=\\
&\{t_{K_8}, \{3\}, \{3, 6\}, \{2, 3, 6\}, \{2, 3, 6, 7\}, \{2, 3, 5, 6, 7\}, \{2, 3, 5, 6, 7, 8\}, \{2, 3, 4, 5, 6, 7, 8\}\}.\end{align*}
\medskip

\end{example}
\medskip

\begin{notation} \label{stilde} Let $T$ be a tubing of $\G$ and let $s$ be a tube in $\G_T^*$. Let ${\tilde s}$ denotes the tube ${\tilde s} := s\cup \{t^{i_1},\dots ,t^{i_r}\}$, where $t^{i_1},\dots ,t^{i_r}$ are the maximal proper tubes of $T\vert _t$ which are linked to at least one of a union of tubes which comprise $s$ in $\G$. In other words, ${\tilde s}$ is the minimal tube satisfying that $s\subseteq {\tilde s}$ and
$(T, {\tilde s})$ belongs to ${\mbox {Tub}(\G)}$.\end{notation}
\medskip

\begin{theorem} \label{lacompdetubisass} (Associativity of substitution) Let $T$ be a tubing in $\G$ and let $t$ be a tube in $T$. Given a tubing $S\in {\mbox {Tub}((\G_t)_{T\vert _t}^*)}$ and a proper tube $s\in S$, we have that:\begin{enumerate}
\item The graphs $(((\G_t)_{T\vert _t}^*)_s)_{S\vert _s}^*$ and $(\G_{\tilde s})_{\gamma _t(T,S)\vert_{\tilde s}}^*$ are equal.
\item For any tubing $W$ of $(((\G_t)_{T\vert _t}^*)_s)_{S\vert _s}^*$, the tubing $\gamma_s(S; W)$ is a tubing of $(\G_t)_{T\vert _t}^*$, which satisfies that
\begin{equation*}\gamma _t(T,\gamma _s(S,W)) = \gamma _{\tilde s}(\gamma _t(T,S), W).\end{equation*}\end{enumerate}\end{theorem}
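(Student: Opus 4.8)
The plan is to reduce the associativity of the full substitution $\gamma$ to the two-step substitution $\gamma_t$ by repeated use of Lemma~\ref{separatedtubes}, Lemma~\ref{lem:inductubings}, and Proposition~\ref{propindepofgammafortubes}, and then to establish the graph identity in part (1) by the same kind of edge-chasing used in the proof of Lemma~\ref{separatedtubes}. First I would unwind the definitions: $\gamma_t(T;S) = T\circ_t(T\vert_t\bullet_{\G_t}S)$, and by Notation~\ref{stilde} the tube $\tilde s$ is the minimal tube of $\G$ with $s\subseteq\tilde s$ and $(T,\tilde s)\in{\mbox {Tub}(\G)}$; in particular $\tilde s\subseteq t$ and $\tilde s\in\gamma_t(T;S)$ by construction of $\bullet_{\G_t}$. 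The point is that $(\G_t)_{T\vert_t}^*$ is exactly the ambient graph into which $S$ is substituted, and restricting that graph to a tube $s\in S$ and then reconnecting by the tubes of $S$ below $s$ should land on the same graph obtained from $\G$ by restricting to $\tilde s$ and reconnecting by the tubes of $\gamma_t(T;S)$ strictly inside $\tilde s$.

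For part (1), I would argue first that both graphs have the same node set. On the left, the nodes of $(((\G_t)_{T\vert_t}^*)_s)_{S\vert_s}^*$ are the nodes of $s$ minus the nodes lying in tubes of $S$ strictly contained in $s$; translating back through the renumbering that produced $(\G_t)_{T\vert_t}^*$, these are exactly the nodes of $\tilde s$ minus the nodes in the maximal proper tubes of $T\vert_t$ linked to $s$, minus the nodes of the tubes of $S$ inside $s$ — which is precisely the node set of $(\G_{\tilde s})_{\gamma_t(T;S)\vert_{\tilde s}}^*$ since the proper tubes of $\gamma_t(T;S)$ inside $\tilde s$ are, by Definition~\ref{defn:substitution}(1), the tubes $t^i$ of $T\vert_t$ together with the enlarged tubes $s'\cup\{t^{i_1},\dots\}$ coming from $S$. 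For the edges, I would invoke Lemma~\ref{rem:restrictionreconnected} and the edge characterization in Lemma~\ref{separatedtubes}: an edge $\{v,w\}$ appears in the left-hand graph iff $\{v,w\}$ (or a suitable pair related to it through the tubes of $T\vert_t$ linked to $s$, and through the tubes of $S$ inside $s$) is an edge of $\G$; expanding the definition of $(\G_t)_{T\vert_t}^*$ one layer at a time, this is precisely the condition for $\{v,w\}$ to be an edge of $(\G_{\tilde s})_{\gamma_t(T;S)\vert_{\tilde s}}^*$ — the two layers of reconnected complements commute by Lemma~\ref{separatedtubes}, which is the key combinatorial input.

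Having (1), part (2) is comparatively formal. The tubing $\gamma_s(S;W)=S\circ_s(S\vert_s\bullet_{\G_s}W)$ is a tubing of $(\G_t)_{T\vert_t}^*$ by Lemma~\ref{lem:inductubings} applied inside $(\G_t)_{T\vert_t}^*$ (note $W$ lives on the graph identified in (1), so the hypotheses are met). For the displayed equality I would compare the two tubings tube-by-tube. On the right, $\gamma_{\tilde s}(\gamma_t(T;S);W)$ adds to $\gamma_t(T;S)$ exactly the tubes coming from $W$, each enlarged by the maximal proper tubes of $\gamma_t(T;S)\vert_{\tilde s}$ to which it is linked; using (1) these enlargements match, term by term, the enlargements that occur when $W$ is pushed through $\gamma_s(S;W)$ and then through $\gamma_t(T;-)$, because a tube of $W$ that is linked to a tube $t^i$ of $T\vert_t$ inside $\tilde s$ in the graph $\G$ is linked to the corresponding tube in $S$ or in the intermediate graph, again by the edge identity from (1). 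The tubes of $T$ itself, and the tubes added by $S$, are plainly common to both sides. Thus every tube of the left-hand tubing is a tube of the right-hand one and conversely, giving the equality.

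The main obstacle, as usual with reconnected complements, is bookkeeping: making the node-renumbering identifications explicit enough that ``edge of $(\G_t)_{T\vert_t}^*$'' can be rigorously compared with ``edge of $\G_{\tilde s}$'', and checking that the ``linked to a union of tubes comprising $s$'' clause in Definition~\ref{defn:substitution}(1) behaves compatibly under a further round of substitution. I expect the cleanest route is to prove part (1) by induction on the number of maximal proper tubes of $T\vert_t$ (peeling them off one at a time, using Lemma~\ref{separatedtubes} for the commutation at each step and Lemma~\ref{rem:restrictionreconnected} to track edges), reducing to the base case $T\vert_t=\{t_{\G_t}\}$ where $(\G_t)_{T\vert_t}^*=\G_t$ and $\tilde s=s$, so the statement collapses to the plain identity $((\G_t)_s)_{S\vert_s}^* = (\G_t)_s)_{S\vert_s}^*$ together with a single application of Definition~\ref{defn:substitution}.
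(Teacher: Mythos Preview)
Your outline is essentially correct and, for part~(2), follows the same tube-by-tube comparison the paper carries out. The one place where your plan diverges from the paper is in part~(1): you propose an induction on the number of maximal proper tubes of $T\vert_t$, peeling them off with Lemma~\ref{separatedtubes} until $T\vert_t=\{t_{\G_t}\}$. The paper instead makes a single reduction: since everything takes place inside the subgraph on the nodes of $s$ (which sit inside $t$), one may replace $t$ by $t_{\G}$ from the outset and prove directly that
\[
((\G_T^*)_s)_{S\vert_s}^* \;=\; (\G_{\tilde s})_{\gamma_{t_{\G}}(T,S)\vert_{\tilde s}}^*,
\]
comparing edge sets in one pass. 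Your induction would work, but it re-derives this identity one tube at a time rather than all at once; the paper's reduction is shorter and avoids the bookkeeping you flag as the ``main obstacle''.

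Two small cautions. First, your appeal to Lemma~\ref{rem:restrictionreconnected} is misplaced: that lemma concerns the restriction map ${\mbox{res}}_\Omega^\G$ between graphs with the same node set and nested edge sets, which is not the situation here; the edge comparison you need is the direct one in the spirit of Lemma~\ref{separatedtubes}, as you also say. Second, in your edge analysis for part~(1) you should record explicitly that the maximal proper tubes of $\gamma_t(T,S)\vert_{\tilde s}$ are of two kinds --- tubes of $T\vert_t$ not linked to any tube of $S\vert_s$, and tubes of the form $\widetilde{s'}$ for $s'\in{\mbox{Maxt}}(S\vert_s)$ --- since this is exactly what makes the two edge descriptions match and is the same observation that drives the tube comparison in part~(2).
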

\medskip

\begin{proof} For the first assertion, the nodes of the graph $(((\G_t)_{T\vert _t}^*)_s)_{S\vert _s}^*$ are the nodes of $s$ which do not belong to any other tube of $S$, which coincide with the nodes of $(\G_{\tilde s})_{\gamma _t(T,S)\vert_{\tilde s}}^*$. As we work on the subgraph determined by $s$, it suffices to prove the result for $t=t_{\G}$, or equivalently that
\begin{equation*} ((\G_T^*)_s)_{S\vert_s}^*= (\G_{\tilde{s}})_{\gamma_{t_{\G}}(T,S)\vert _{\tilde{s}}}^*,\end{equation*}
for any tubing $S$ compatible with $T$ and $s\in S$.

Clearly, the edges of $((\G_T^*)_s)_{S\vert_s}^*$ are either edges of $(\G_s)$ (which do not belong to any other tube contained in $s$), or the pairs of nodes $u, v$ in $s$ which are linked to a same tube $t\rq$ in $T$.

On the other hand, the edges of $(\G_{\tilde{s}})_{\gamma_{t_{\G}}(T,S)\vert _{\tilde{s}}}^*$, are the edges of $\G$ which have both extremes in $s$ and in no other tube, or the pairs of nodes $u, v$ in $s$ which are linked to a same tube $t\rq$ in $T$ or to a tube $s\rq \in S$, which is contained in $s$. Using that the edges of $((\G_T^*)_s)_{S\vert_s}^*$ are the edges which belong to $s$ (and to no other tube in $S$) or the pairs of nodes in $s$ which are linked to some $s\rq \in S$, with $s\rq \subset s$, we get the result.
\medskip

To prove the second assertion, note that a tube ${\tilde w}$ belongs to $\gamma_s(S; W)$ if either ${\tilde w}\in S$, or it is of the form ${\tilde w} = w \cup \{s^{i_1},\dots ,s^{i_r}\}$, for $w \in W$ and $s^{i_1},\dots ,s^{i_r}$ the maximal tubes in $S\vert _s$ which are linked to $w$.

In a similar way, a tube $t\rq$ in $\G$ belongs to $\gamma_t(T; \gamma_s(S; W))$ whenever it fulfills one of the following conditions:\begin{itemize}
\item $t\rq \in T$,
\item $t\rq = {\tilde w} = w\cup \{t^{j_1},\dots , t^{j_k}\}$ for some $w \in \gamma_s(S; W)$, where $t^{j_1},\dots ,t^{j_k}$ are the maximal tubes of $T\vert _t$ which are linked to $w$.\end{itemize}

So, $\gamma_t(T; \gamma_s(S; W))$ is the collection of tubes $u$ in $\G$ satisfying one of the following conditions:\begin{enumerate}
\item $u\in T$, or
\item $u\subsetneq t$ and $u = {\tilde s\rq}= s\rq \cup \{t^{j_1},\dots , t^{j_k}\}$, for some tube $s\rq \subsetneq s$ in $S$,
\item $u\subset s$ and $u = {\tilde w} = w\cup \{ {\tilde {s}}^{i_1},\dots ,{\tilde {s}}^{i_r}\}\cup \{t^{j_1},\dots ,t^{j_k}\}$ for some tube $w\in W$, where $s^{i_1},\dots , s^{i_r}$ are the tubes
in ${\mbox {Maxt}(S\vert _s)}$ linked to $w$, and $t^{j_1},\dots ,t^{j_k}$ are the maximal tubes of $T\vert _t$ linked to $w$.\end{enumerate}
\medskip

On the other hand, a tube of $\gamma _{\tilde s}(\gamma _t(T,S), W)$ is either a tube of $\gamma _t(T,S)$ or a tube of the form ${\tilde w} = w\cup \{u^{l_1},\dots , u^{l_p}\}$, where $u^{l_1},\dots , u^{l_p}$ are the maximal tubes of $\gamma _t(T,S)$ contained in ${\tilde s}$ which are linked to $w$.

But the elements of ${\mbox {Maxt}(\gamma _t(T,S))}$ contained in ${\tilde s}$ are the maximal tubes of $T\vert _t$ which are not linked to any $s\in S$ and the tubes of the form ${\tilde s\rq}$, for $s\in S$.

Therefore, a tube $u\in \gamma _{\tilde s}(\gamma _t(T,S), W)$ is a tube in $\G$ satisfying one of the following conditions:\begin{enumerate}
\item $u\in T$,
\item $u={\tilde s\rq}$, for some $s\in S$,
\item $u =w\cup \{ {\tilde {s}}^{i_1},\dots ,{\tilde {s}}^{i_r}\}\cup \{t^{j_1},\dots ,t^{j_k}\}$ for some tube $w\in W$, where $s^{i_1},\dots , s^{i_r}$ are the tubes
in ${\mbox {Maxt}(S\vert _s)}$ linked to $w$, and $t^{j_1},\dots ,t^{j_k}$ are maximal tubes of $T\vert _t$ linked to $w$ but not linked to any $s\rq \in S$,\end{enumerate} which ends the proof.
\end{proof}
\medskip

Note that Theorem \ref{lacompdetubisass} implies that the substitution $\gamma$ is associative in the following way:

Let $T = \{ t_{\G}=t^0,t^1,\dots ,t^k\}$ be a tubing in $\G$, and let $S^i=\{ s^{0i},\dots ,s^{l_ii}\}$ be  a family of tubings $S^i\in {\mbox {Tub}((\G_{t^i}})_{T\vert _{t^i}}^*)$, for $0\leq i\leq k$. Suppose that $W^{ji}\in {\mbox {Tub}(\G_{s^{ji}})_{T\vert _{s^{ji}}}^*}$ is a collection of tubings, for each pair $(i,j)$ with $0\leq j\leq i$. We have that:

\begin{align*} \gamma(\gamma (T; &S^0,\dots ,S^k);W^{00},\dots ,W^{l_00},\dots ,W^{l_kk}) = \\
 &\gamma (T; \gamma (\gamma_{t^0}(T,S^0)\vert_{t^0};W^{00},\dots ,W^{l_00}),\dots , \gamma (\gamma_{t^k}(T,S^k)\vert_{t^k};W^{0k},\dots ,W^{l_kk})).\end{align*}
\bigskip

Using the notations introduced in Remarks \ref{rem:restriction} and \ref{rem:restrictionreconnected}, we study the relationship between substitution and restriction.

\begin{lemma} \label{lem:auxrestsubs} Let $\G$ and $\Omega$ be two finite simple connected graphs with the same set of nodes and such that ${\mbox {Edg}(\Omega)} \subseteq {\mbox {Edg}(\G)}$.
For any pair of tubings $T\in {\mbox {Tub}(\G)}$ and $S\in {\mbox {Tub}(\G_T^*)}$, we have that:\begin{enumerate}
\item The graphs $\G_T^*$ and $\Omega _{\hat{T}}^*$ have the same set of nodes and ${\mbox {Edg}(\Omega _{\hat{T}}^*)}\subseteq {\mbox {Edg}(\G_T^*)}$,
\item ${\mbox {rest}_{\Omega}^{\G}(\gamma _{t_{\G}}(T,S))} = \gamma _{t_{\Omega}}({\hat {T}}, {\hat {S}}),$\end{enumerate}
where ${\hat {T}}:={\mbox {rest}_{\Omega}^{\G}(T)}$ and ${\hat {S}}:={\mbox {rest}_{\Omega_{\hat{T}}^{*}}^{\G_T^*}}(S)$.\end{lemma}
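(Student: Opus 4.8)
The plan is to prove both statements by tracking nodes and edges explicitly, exactly as in the proofs of Lemma~\ref{separatedtubes} and Theorem~\ref{lacompdetubisass}. For part (1), the set of nodes of $\G_T^*$ is $\textrm{Nod}(\G)$ minus the maximal proper tubes of $T$, which depends only on $T$ as a set of subsets of the common node set, not on the edges; the same holds for $\Omega_{\hat T}^*$, and since $\textrm{res}_{\Omega}^{\G}$ does not change which subsets of nodes are tubes (it only can turn a tube of $\G$ into a tube of $\Omega$ with the same underlying node set), the maximal proper tubes of $\hat T$ cover the same nodes as those of $T$. Hence $\G_T^*$ and $\Omega_{\hat T}^*$ share a node set. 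For the edge inclusion, I would iterate Lemma~\ref{rem:restrictionreconnected}(1): an edge of $\Omega_{\textrm{res}_{\Omega}^{\G}(t)}^*$ is an edge of $\G_t^*$, and feeding this through the iterated reconnected-complement construction of Definition~\ref{maxtubings} one maximal tube at a time gives $\textrm{Edg}(\Omega_{\hat T}^*)\subseteq\textrm{Edg}(\G_T^*)$. The order-preserving renumbering conventions make the identification of node sets literal rather than just a bijection.

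For part (2), both sides are tubings of $\Omega$ with the same underlying set of nodes, so it suffices to show they have the same set of tubes. I would unwind both descriptions using Definition~\ref{defn:substitution}(1) and Notation~\ref{stilde}. A tube of $\gamma_{t_{\G}}(T,S)$ is either a tube $t\in T$, or of the form $\tilde s = s\cup\{t^{i_1},\dots,t^{i_r}\}$ where the $t^{i_j}$ are the maximal proper tubes of $T$ linked in $\G$ to some constituent tube of $s$; applying $\textrm{res}_{\Omega}^{\G}$ does not change underlying node sets, so the image is: tubes $\hat t$ for $t\in T$ (i.e. tubes of $\hat T$), together with sets $s\cup\{t^{i_1},\dots,t^{i_r}\}$ with the same linking data as measured in $\G$. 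On the other side, a tube of $\gamma_{t_{\Omega}}(\hat T,\hat S)$ is a tube of $\hat T$, or of the form $s\cup\{t^{j_1},\dots,t^{j_k}\}$ where the $t^{j_\ell}$ are the maximal proper tubes of $\hat T$ linked in $\Omega$ to some constituent of $s$. So the whole proof reduces to the claim: for a tube $s$ of $\G_T^*$ (equivalently of $\Omega_{\hat T}^*$) and a maximal proper tube $t^i$ of $T$, $t^i$ is linked in $\G$ to a constituent tube of $s$ if and only if the corresponding maximal proper tube of $\hat T$ is linked in $\Omega$ to a constituent tube of $s$. Here I would use the definition of the reconnected complement together with Lemma~\ref{rem:restrictionreconnected}(2): the decomposition of $s$ into tubes ``all linked to $t$'' that appears in Definition~\ref{defn:inductions}(2b) is governed precisely by which pairs of nodes form edges in the iterated reconnected complement, and the ``only if'' direction is immediate from $\textrm{Edg}(\Omega)\subseteq\textrm{Edg}(\G)$ being reflected upward, while the ``if'' direction needs the observation that a linking edge in $\G$ between $t^i$ and a node in $s$ either survives in $\Omega$, or is replaced by a path through the removed tubes that manifests as an edge in $\Omega_{\hat T}^*$, so the constituents of $s$ regroup correctly.

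I expect the main obstacle to be this last point: controlling how the decomposition of $s$ into ``constituent tubes each linked to a given maximal tube of $T$'' behaves under passing from $\G$ to $\Omega$. Removing edges can both split a constituent tube of $s$ into several pieces and, via the reconnected-complement edges, fail to rejoin them, so one must argue that the union $\tilde s = s\cup\{\text{linked maximal tubes}\}$ is nonetheless the same set of nodes computed either way. The cleanest route is the characterization in Notation~\ref{stilde}: $\tilde s$ is the \emph{minimal} tube with $s\subseteq\tilde s$ and $(T,\tilde s)\in\textrm{Tub}(\G)$. So I would instead prove: $(\hat T, s\cup X)\in\textrm{Tub}(\Omega)$ for a union $X$ of maximal proper tubes of $\hat T$ if and only if $(T, s\cup X')\in\textrm{Tub}(\G)$ where $X'$ is the corresponding union in $T$, and then minimality forces the two $\tilde s$'s to agree. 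This compatibility of the ``tubing-closure'' operation with $\textrm{res}_{\Omega}^{\G}$ is essentially a restatement of the surjectivity and naturality in Remark~\ref{rem:restriction} combined with part (1), and once it is in hand both inclusions of tube sets in (2) follow by matching the two bulleted descriptions termwise. A short induction on $k = |\textrm{Maxt}(T)|$, peeling off one maximal tube at a time and invoking Lemma~\ref{rem:restrictionreconnected} at each step, organizes the bookkeeping.
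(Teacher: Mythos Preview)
Your proposal rests on a false premise about the restriction map. You write that $\textrm{res}_{\Omega}^{\G}$ ``only can turn a tube of $\G$ into a tube of $\Omega$ with the same underlying node set,'' and later that ``applying $\textrm{res}_{\Omega}^{\G}$ does not change underlying node sets.'' This is not how the map works: since $\Omega$ has fewer edges, a tube $t$ of $\G$ need not be connected in $\Omega$, and $\textrm{res}_{\Omega}^{\G}$ sends it to the \emph{collection of its connected components} in $\Omega$. The paper's own example after Remark~\ref{rem:restriction} shows the tube $\{1,3\}$ in $K_3$ becoming the two tubes $\{1\},\{3\}$ in $L_3$. Your argument for part~(1) survives this (the union of the components still covers the same nodes), but your argument for part~(2) does not.

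Concretely, your reduction to the minimality characterisation in Notation~\ref{stilde} fails to typecheck: a tube $s\in S\subseteq\textrm{Tub}(\G_T^*)$ is in general \emph{not} a tube of $\Omega_{\hat T}^*$; under $\textrm{res}_{\Omega_{\hat T}^*}^{\G_T^*}$ it breaks into several tubes $\hat s_1,\dots,\hat s_k$, each of which has its own $\widetilde{\hat s_j}$ on the $\Omega$ side. Likewise $\tilde s$ itself, restricted to $\Omega$, splits into several pieces, some of which may lie entirely inside a maximal tube of $T$ and hence already belong to $\hat T$. So the bijection you are trying to set up between tubes of the two sides is really a correspondence between a single $\tilde s$ and a \emph{family} of tubes on the $\Omega$ side, and your proposed ``if and only if'' statement about $(T,s\cup X')$ versus $(\hat T,s\cup X)$ cannot even be formulated with a single $s$ on both sides. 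The paper's proof faces this head-on: it lists the connected components $s_i$ of $\textrm{res}_{\Omega}^{\G}(\tilde s)$, separates those lying entirely inside a maximal tube of $T$ (which land in $\hat T$) from those meeting $\G_T^*$, and shows each of the latter has the form $\hat s\cup\{\hat t^{j_1},\dots,\hat t^{j_r}\}$ for a unique $\hat s\in\hat S$. That uniqueness argument --- two disjoint pieces of $\hat S$ inside one $s_i$ would have to be linked to the same maximal tube of $T$, contradicting disjointness in $\Omega_{\hat T}^*$ --- is the step your outline is missing.
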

\medskip

\begin{proof} The first assertion is easily verified.

A tube ${\hat{t}}\in \gamma _{t_{\Omega}}({\hat {T}}, {\hat {S}})$ is a tube in $\Omega$ which satisfies one of the following conditions:\begin{enumerate}[(a)]
\item ${\hat{t}}$ is a connected component of ${\mbox {rest}_{\Omega}^{\G}({t})}$, for some ${t}\in T$,
\item ${\hat{t}}$ is of the form ${\hat{s}}\cup \{{\hat{ t}}^1,\dots ,{\hat {t}}^r\}$, where ${\hat{s}}$ is a connected component of ${\mbox {rest}_{\Omega_{\hat{T}}^{*}}^{\G_T^*}}(s)$, for some $s\in S$, and ${\hat{t}}^1,\dots ,{\hat{t}}^r$ are the maximal tubings in
${\mbox {rest}_{\Omega}^{\G}(T)}$ which are linked to ${\hat{s}}$ in $\Omega$.\end{enumerate}

On the other hand, given a tube $s\in S$ the tube induced by $s$ in $\gamma _{t_{\G}}(T,S)$ is ${\tilde s}= s\cup \{t^1,\dots ,t^m\}$, where $t^1,\dots ,t^m$ are the maximal proper tubes of $T$ which are linked to $s$. The restriction of ${\tilde s}= s\cup \{t^1,\dots ,t^m\}$ to $\Omega$ is a union of tubes $\{ s_i\}_{1\leq i\leq k}$. Each tube $s_i$, for $1\leq i\leq k$, is of one of the following types:\begin{enumerate}[(a)]
\item $s_i$ is a tube with all its nodes in a maximal tubing $t^{j_i}$ of $T$, and in this case $s_i\in {\mbox {rest}_{\Omega}^{\G}(T)}$, because the maximal tubings of $T$ are not linked,
\item If $s_i$ contains some node in $\G_{T}^*$, then $s_i$ cannot contain two disjoint tubes ${\hat{s}}$ and ${\hat{s}}\rq $ of ${\hat{S}}$ because in this case both tubes must be linked to the same maximal proper tube of $T$, and therefore they cannot be disjoint in $\Omega_{\hat{T}}^{*}$. So, there exist a unique tube ${\hat{s}}\in {\hat{S}}$ such that $s_i = {\hat{s}}\cup \{{\hat{t}}^{j_1},\dots ,{\hat{t}}^{j_r}\}$, where ${\hat{t}}^{j_l}\in {\hat{T}}$ is a connected component of the restriction of a maximal tube
$t^{j_l}$ of $T$ to $\Omega$, for $1\leq l\leq r$, such that ${\hat{t}}^{j_l}$ is linked to ${\hat{s}}$, and $\{t^{j_1},\dots ,t^{j_r}\} \subseteq \{t^1,\dots ,t^m\}$.\end{enumerate}

The last paragraph shows that the tubes of ${\mbox {rest}_{\Omega}^{\G}(\gamma _{t_{\G}}(T,S))}$ are the same than the tubes of $\gamma _{T_{\Omega}}({\hat{T}}, {\hat{S}})$, which ends the proof.\end{proof}
\medskip

The proof of the following result is easily obtained from Lemma \ref{lem:auxrestsubs}, by applying a recursive argument on $k$.

\begin{proposition} \label{lem:restrictiongamma} Let $\G$ and $\Omega $ be two graphs with the same set of nodes satisfying that ${\mbox {Edg}(\Omega)}\subseteq {\mbox {Edg}(\G)}$. For any $T\in {\mbox {Tub}(\G)}$, any tube $t\in T$ and any tubing $S\in {\mbox {Tub}(\G_{T\vert_t}^*)}$ , we have that:
\begin{equation*}\gamma ({\mbox {res}_{\Omega}^{\G}}(T); {\hat{S}}^0,\dots ,{\hat{S}}^k)= {\mbox {res}_{\Omega}^{\G}}(\gamma _t(T, S)),\end{equation*}
where the tube $t$ induces a tubing ${\mbox {res}_{\Omega}^{\G}}(t)=\{ t_1,\dots ,t_k\}$, with $t_i\cap t_j = \emptyset$ for $i\neq j$, and ${\hat{S}}^i$ is the tubing induced by $S$ on the reconnected complement $(\Omega _{t_i})_{{\mbox {res}_{\Omega}^{\G}}(T)\vert_{t_i}}^*$.\end{proposition}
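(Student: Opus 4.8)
The plan is to prove Proposition~\ref{lem:restrictiongamma} by reducing the general $t$-substitution to the case $t=t_\G$ already handled in Lemma~\ref{lem:auxrestsubs}, and then running an induction on the number $k$ of connected components of $\mathrm{res}_\Omega^\G(t)$. First I would observe that by Definition~\ref{defn:substitution}(2) the $t$-substitution $\gamma_t(T,S)$ equals $T\circ_t(T\vert_t\bullet_{\G_t}S)$, which is built entirely inside the subgraph $\G_t$; similarly, passing to $\Omega$ only affects those tubes of $\mathrm{res}_\Omega^\G(T)$ lying inside the components $t_1,\dots,t_k$ of $\mathrm{res}_\Omega^\G(t)$. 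Thus the identity to be proved is local to $\G_t$, and since the restriction operation commutes with taking induced subgraphs and reconnected complements (Remark~\ref{rem:restriction}, Lemma~\ref{rem:restrictionreconnected}), it suffices to establish the statement for $t=t_\G$ and then reassemble — the tubes of $T$ outside $t$ are untouched on both sides.

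Next I would set up the induction. When $\mathrm{res}_\Omega^\G(t)=\{t_1\}$ is a single component (in particular when $t$ stays connected in $\Omega$), the claim is exactly Lemma~\ref{lem:auxrestsubs}(2) applied to the graphs $\G_t$ and $\Omega_{t_1}$: we get $\mathrm{res}_\Omega^\G(\gamma_{t_\G}(T\vert_t,S)) = \gamma_{t_\Omega}(\widehat{T\vert_t},\widehat S)$, which is the $k=1$ case of the displayed formula. For the inductive step, factor $\mathrm{res}_\Omega^\G$ as a composite of single-edge-deletion restrictions $\mathrm{res}_\Theta^\G$ followed by $\mathrm{res}_\Omega^\Theta$, using the composition law $\mathrm{res}_\Theta^\Omega\circ\mathrm{res}_\Omega^\G=\mathrm{res}_\Theta^\G$ from Remark~\ref{rem:restriction}. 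Choose $\Theta$ so that one more component of $t$ splits off; apply Lemma~\ref{lem:auxrestsubs} to pass from $\G$ to $\Theta$, obtaining a substitution into $\mathrm{res}_\Theta^\G(T)$ over the reconnected complements of the (now more refined) pieces of $t$; then invoke the inductive hypothesis for $\mathrm{res}_\Omega^\Theta$ to split the remaining components. Because Lemma~\ref{lem:auxrestsubs} already verifies that the tube $\tilde s$ induced by $s\in S$ restricts correctly — each restricted piece being of the form $\hat s\cup\{\hat t^{j_1},\dots,\hat t^{j_r}\}$ with $\hat s$ a unique component of $\mathrm{res}(s)$ together with the restricted maximal tubes of $T$ linked to it — the bookkeeping carries through verbatim at each stage.

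The main obstacle, I expect, is not the recursion itself but making precise the claim that the pieces $\hat S^i$ appearing in the statement — the tubings induced by $S$ on $(\Omega_{t_i})^*_{\mathrm{res}_\Omega^\G(T)\vert_{t_i}}$ — are exactly the tubings produced at each step of the recursion, independent of the order in which the components $t_1,\dots,t_k$ are split off. This is where Lemma~\ref{rem:restrictionreconnected}(1)–(2) must be used carefully: one needs that $S$, a tubing of $\G_T^*$, restricts compatibly to each $\Omega^*_{\hat T}$-piece, and that the node sets of the $(\Omega_{t_i})^*_{\cdots}$ partition the node set of $\G_T^*$ with edge sets contained in the corresponding edge sets of $\G_T^*$. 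Granting that, the well-definedness of the iterated reconnected complement (Lemma~\ref{separatedtubes}, Definition~\ref{maxtubings}) guarantees order-independence, and the formula follows by matching tube-by-tube exactly as in the final paragraph of the proof of Lemma~\ref{lem:auxrestsubs}.
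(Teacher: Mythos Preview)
Your proposal is correct and follows essentially the same route as the paper: the paper's own proof is the single sentence ``easily obtained from Lemma~\ref{lem:auxrestsubs}, by applying a recursive argument on $k$'', and that is precisely what you do --- reduce to the universal tube via locality, take Lemma~\ref{lem:auxrestsubs} as the base case, and induct on the number of connected components of $\mathrm{res}_\Omega^\G(t)$. Your factorization through intermediate graphs $\Theta$ (using the composition law of Remark~\ref{rem:restriction}) and your explicit discussion of why the pieces $\hat S^i$ are order-independent (via Lemma~\ref{separatedtubes}) simply flesh out what the paper leaves implicit.
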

\medskip

\begin{proposition}\label{propgenerators} For any connected graph $\G$, a tubing $T\in {\mbox {Tub}(\G)}$ may be obtained from $(\G, \{t_{\G}\})$ applying substitutions of type $\gamma _{t_{\G}}(\ , \{ t\})$, where $\{ t\}$ denotes the tubing whose unique tubes are $t_{\G}$ and $t$.\end{proposition}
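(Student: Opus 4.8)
The plan is to prove this by induction on the number of proper tubes of $T$, i.e.\ on $|{\overline T}|$. The base case $|{\overline T}| = 0$ is trivial, since then $T = \{t_{\G}\} = T_{\G}$ and there is nothing to do. For the inductive step, suppose $T$ has $k \geq 1$ proper tubes. The key idea is to strip off a \emph{minimal} proper tube of $T$: choose a tube $t \in {\overline T}$ that is minimal with respect to inclusion among the proper tubes of $T$ (so $T$ contains no proper tube strictly inside $t$). Let $T' := T \setminus \{t\}$. First I would check that $T'$ is still a tubing of $\G$: removing a tube from a set of pairwise-compatible tubes containing $t_{\G}$ leaves a set of pairwise-compatible tubes containing $t_{\G}$, so this is immediate from Definition~\ref{defntubing}. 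By the inductive hypothesis, $T'$ can be obtained from $(\G, \{t_{\G}\})$ by a sequence of substitutions of the form $\gamma_{t_{\G}}(\ , \{t\})$.

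Next I would recover $T$ from $T'$ by a single further substitution $\gamma_{t_{\G}}(T', \{s\})$ for an appropriate tube $s$ of $\G_{T'}^*$. Here is where minimality of $t$ is used. Because $t$ is minimal in $T$, it contains no other proper tube of $T = T' \cup \{t\}$, hence no proper tube of $T'$; and because $t$ is compatible with every tube of $T'$, each proper tube of $T'$ is either disjoint from $t$ or contains $t$. Thus $t$ meets $\G_{T'}^*$ (the reconnected complement of $\G$ after removing all maximal proper tubes of $T'$) in a nonempty set of nodes: namely, $t$ restricted to those nodes of $\G$ not covered by any maximal proper tube of $T'$ that is disjoint from $t$. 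Call this set $s$; I claim $s$ is a tube of $\G_{T'}^*$ and that $\gamma_{t_{\G}}(T', \{s\}) = T$. The inclusion $s \subseteq {\tilde s}$ together with minimality of ${\tilde s}$ (Notation~\ref{stilde}) identifies ${\tilde s}$ with $t$ itself: ${\tilde s}$ is obtained from $s$ by adjoining exactly those maximal proper tubes of $T'$ linked to $s$, and these are precisely the maximal proper tubes of $T'$ contained in $t$ — so ${\tilde s} = s \cup (\text{tubes of } T' \text{ inside } t) = t$. By the definition of $\bullet_{\G}$ (Definition~\ref{defn:substitution}(1)) and the fact that $\gamma_{t_{\G}}(T', \{s\}) = T' \bullet_{\G} \{s\}$, the resulting tubing consists of all tubes of $T'$ together with ${\tilde s} = t$, which is exactly $T$.

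The main obstacle I anticipate is the careful bookkeeping in identifying $s$ and verifying that it is genuinely a tube of the iterated reconnected complement $\G_{T'}^*$, rather than, say, a disconnected set of nodes there. This requires checking that after deleting the nodes of the maximal proper tubes of $T'$ that lie outside $t$, the remaining nodes of $t$ still induce a connected subgraph of $\G_{T'}^*$; Theorem~\ref{topo} and Corollary~\ref{paths} give a clean way to see this — $t$ was connected in $\G$, hence topologically connected (indeed path-connected) in the topology generated by $T'$, and connectivity of a tube is exactly what survives passing to the reconnected complements. One also needs that the maximal proper tubes of $T'$ that \emph{are} contained in $t$ do not get absorbed or altered when passing to $\G_{T'}^*$ — but these are exactly the tubes ``inside'' $t$, which the reconnected-complement construction leaves untouched as nodes (they are collapsed only as maximal tubes disjoint from, or equal to the ambient tube, not when nested strictly inside a tube being kept). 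Once these connectivity checks are in place, the identification ${\tilde s} = t$ and the equality $\gamma_{t_{\G}}(T', \{s\}) = T$ follow directly from the definitions, completing the induction.
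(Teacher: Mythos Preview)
Your induction is set up on the right quantity, but the choice to strip off a \emph{minimal} tube is fatal. Suppose $t$ is a minimal proper tube of $T$ that happens to be strictly contained in some other proper tube $t'\in T$. Then $t'\in T'=T\setminus\{t\}$, so $t$ lies inside some maximal proper tube of $T'$, and hence every node of $t$ is deleted when forming the iterated reconnected complement $\G_{T'}^*$. There is then no tube $s$ of $\G_{T'}^*$ with ${\tilde s}=t$: by Definition~\ref{defn:substitution}(1), every new tube produced by $\gamma_{t_{\G}}(T',\{s\})$ contains at least one node of $\G_{T'}^*$, whereas $t$ contains none. A concrete counterexample is $\G=L_3$ with $T=\{t_{\G},\{1,2\},\{1\}\}$: removing the minimal tube $\{1\}$ leaves $T'=\{t_{\G},\{1,2\}\}$, and $\G_{T'}^*$ has the single node $3$, so no substitution of type $\gamma_{t_{\G}}(T',\{s\})$ can reinsert $\{1\}$.

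Your write-up betrays the confusion: you speak of ``maximal proper tubes of $T'$ that are contained in $t$'', but by minimality of $t$ there are none; the issue is rather the maximal proper tubes of $T'$ that \emph{contain} $t$. The paper's proof avoids this by removing a \emph{maximal} proper tube $t^k$ instead. Then $t^k$ is not contained in any other proper tube, so its nodes outside the remaining maximal tubes of $T'$ form a nonempty set, and this set is a single tube $s$ of $\G_{T'}^*$ with ${\tilde s}=t^k$. The fix to your argument is simply to swap ``minimal'' for ``maximal''; the rest of your outline then goes through essentially as the paper's does.
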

\medskip

\begin{proof} Let $T$ be a tubing of $\G$ with ${\overline T} =\{ t^1,\dots , t^k\}$. We use a recursive argument on $k$.

If $k=1$, then $T = \gamma_{t_{\G}} (t_{\G}, \{ t^1\})$.

If $k\geq 2$, we may suppose that $t^k$ is a maximal tube in $T$ (we may change the order of the $t^j$\rq s to get it). By the recursive argument, $T\rq :=\{t^1,\dots ,t^{k-1}\}$ is obtained as $\gamma_{t_{\G}} (\dots (\gamma_{t_{\G}} (t_{\G}, \{t^{i_1}\}),\dots , \{t^{i_{k-1}}\}))$.

As $t^k$ is maximal in $T$, then $\{ t^k\}$ restricts to a tubing $S^k$ with a unique tube in $\G_{T\rq}^*$, and it is not difficult to see that $\gamma (T\rq , S^k)= T$.\end{proof}
\medskip

\begin{example}The tubing $T$
\begin{figure}[h!]
\includegraphics[width=50mm,scale=0.8]{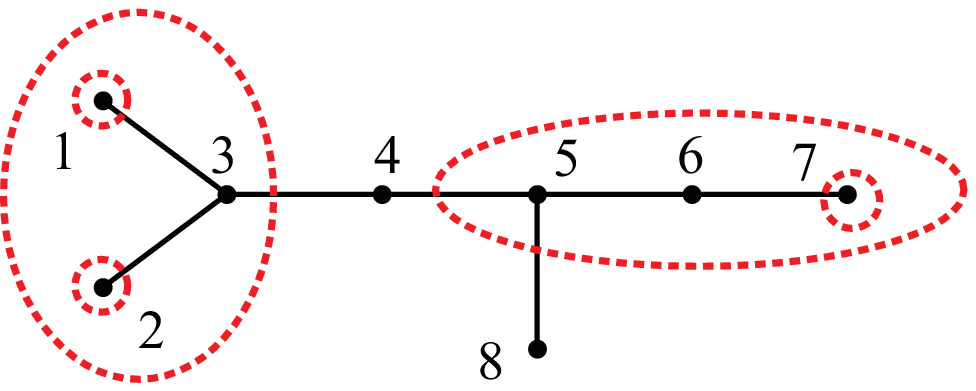}
\end{figure}

is obtained as
\begin{equation*}T = \gamma_{t_{\G}}(\gamma_{t_{\G}}(\gamma _{t_{\G}}(\gamma _{t_{\G}}(\gamma _{t_{\G}}(t_{\G}, \{1\}), \{2\}),\{7\}),\{3\}), \{5,6\}).\end{equation*}
\end{example}
\bigskip

\section{Algebraic description of graph associahedra and relation with non-symmetric operads}
\bigskip

We give a complete description of ${\mbox {\bf Tub}}$ as freely spanned by the set of all finite connected simple graphs and partial operations $\circ _{\G, t}$, where $t$ is a connected subgraph of $\G$, defined by substitution. The boundary map of M. Carr and S. Devadoss graph associahedra is also defined as a natural derivation for this structure.

Finally, we show that non-symmetric operads (see \cite{MSS} or \cite{LV}) and permutads (see \cite{LR}) are particular examples of substitution of graph associahedra.
\medskip

\subsection{Graph associahedra described by operations and relations}
\medskip

 Let us define a partial composition on ${\mbox {\bf Tub}}$.
 \medskip
\begin{definition}\label{partialcomp} Let $\G$ be a graph with ${\mbox {Nod}(\G)} =[n]$, and let ${\mbox{Nod}(t)} = \{ i_1<\dots <i_{\vert t\vert}\}$ be a tube in $\G$, where $\vert t\vert$ denotes the number of nodes in $t$.
\begin{enumerate}[(a)]
\item The element $\sigma_t\in \Sigma _n$ is the permutation whose image is
\begin{equation*}\sigma_t:=(i_1,\dots ,i_{\vert t\vert}, j_1,\dots ,j_{n-\vert t\vert}),\end{equation*}
where ${\mbox {Nod}(\G_t^*)}= [n]\setminus {\mbox {Nod}(t)}=\{ j_1<\dots <j_{n-\vert t\vert}\}$.
\item For any permutation $\Sigma\in \Sigma_n$, let ${inv(\sigma)}$ denote the number of pairs $1\leq i < j\leq n$ satisfying that $\sigma^{-1}(j) <\sigma^{-1}(i)$ and that the pair $(i,j)$ is an edge in $\G$.
The {\it signature} of a graph $\G$ is the map which assigns to any permutation $\sigma\in \Sigma_n$ the integer $(-1)^{\vert {inv}(\sigma)\vert}$, we denote it by ${\mbox {sgn}^{\G}(\sigma)}$.
\item Let $S$ be a tubing of $\G_t^*$ such that $t$ is not linked to any proper tube $s$ of $S$ in $\G$. Define the integer $\alpha (t,S)$ as follows:\begin{itemize}
\item $\alpha(t,S):=-1$, when the minimal node of $t$ is smaller than the minimal node of the disjoint union $\displaystyle{\bigcup_{s\in \text{Maxt}(S)}s}$,
\item $\alpha(t,S):= 1$, when the minimal node of $t$ is larger than the minimal node of some maximal tube $s$ of $S$.\end{itemize}
\item The binary operation $\circ _{(\G, t)}$ is partially defined on ${\mbox {\bf Tub}}$, as follows:
\begin{equation*} S\circ _{(\G, t)} W:= \begin{cases} \alpha(t,W)\cdot (-1)^{\vert t\vert} \gamma _{t_{\G}}(T_{\G}\circ _tS, W),&{\rm for}\ t\ {\rm not\ linked\ to\ any\ proper\ tube\ of}\ W,\\
\gamma _{t_{\G}}(T_{\G}\circ _tS, W),&{\rm for}\ t\ {\rm linked\ to\ a\ proper\ tube\ of}\ W,\end{cases}\end{equation*} for any pair of tubings $S\in {\mbox{Tub}(\G_t)}$ and $W\in {\mbox {Tub}(\G_t^*)}$, where $T_{\G}\circ _tS$ is the tubing of $\G$ induced by $S\in {\mbox {Tub}(\G_t)}$ introduced in Definition \ref{defn:inductions}. That is, a tube $w\in T_{\G}\circ _tS$ is either $t_{\G}$ or a tube in $S$.\end{enumerate}\end{definition}

\vfill\eject

\begin{example} For

\begin{figure}[h!]
\includegraphics[scale=0.5]{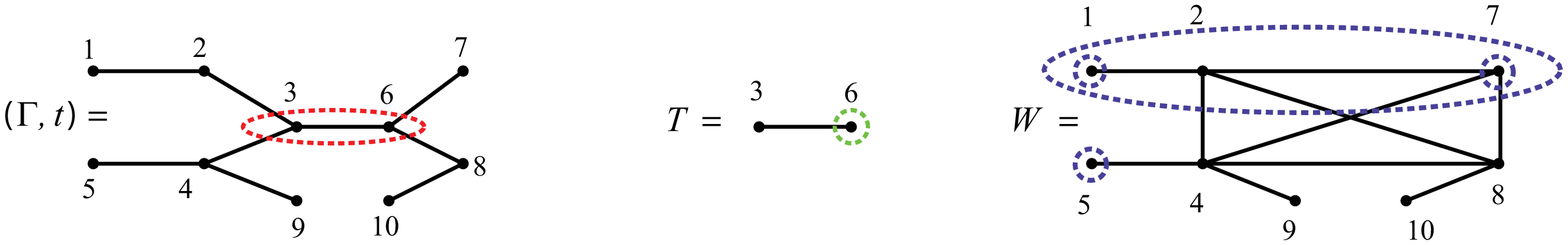}
\end{figure}

since $t$ is linked to a proper tube of $W$, we get that $T\circ_{(\G,t)}W$ is

\begin{figure}[h!]
\includegraphics[scale=0.5]{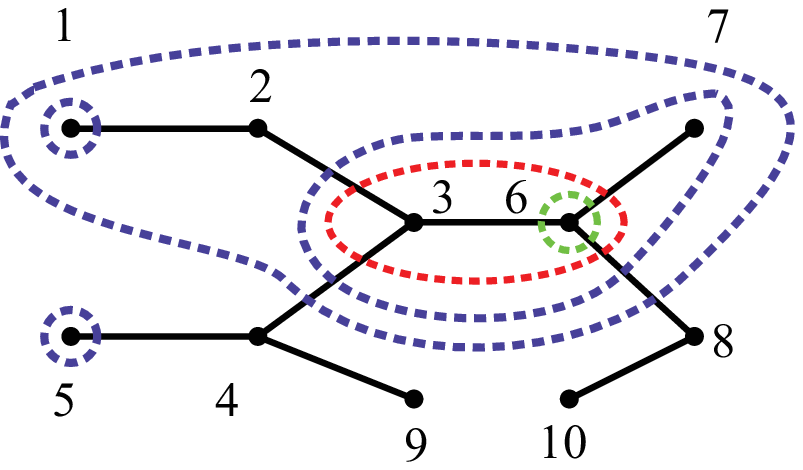}
\end{figure}

\end{example}
\medskip

\begin{proposition}\label{propscirc} The operations $\circ _{(\G, t)}$ satisfy the following relations:\begin{enumerate}
\item For two tubes $t$ and $t\rq $ in a graph $\G$, which are not linked, we get that:
\begin{equation*}T_2\circ _{(\G,t\rq)}(T_1\circ _{(\G_{t\rq}^*, t)} S) = {\mbox {max}\{\alpha (t,S); \alpha(t\rq, S)\}} T_1\circ_{(\G,t)}(T_2\circ_{(\G_t^*, t\rq)} S),\end{equation*}
for $T_1\in {\mbox {Tub}(\G_t)}$, $T_2\in {\mbox {Tub}(\G_{t\rq})}$ and $S\in {\mbox {Tub}(\G_{t, t\rq}^*)}$, where ${\mbox {max}\{\alpha (t,S); \alpha(t\rq, S)\}}$ denotes the maximal integer between $\alpha (t,S)$ and $\alpha (t\rq ,S)$.
\item For two tubes $t\rq \subsetneq t$ in a graph $\G$,
\begin{equation*}(T_2\circ_{(\G_t,t\rq)} T_1)\circ_{(\G,t)} S = T_2\circ _{(\G,t\rq)} (T_1\circ _{(\G_{t\rq}^*,{\tilde t})} S),\end{equation*}
for $T_1\in {\mbox {Tub}((\G_{t})_{t\rq}^*)}$, $T_2\in {\mbox {Tub}(\G_{t\rq})}$ and $S\in {\mbox {Tub}(\G_{t}^*)}$, where $\tilde t$ denotes the tube induced by $t$ in $\G_{t\rq}^*$ .\end{enumerate}
\end{proposition}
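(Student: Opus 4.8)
The plan is to prove both identities by unwinding the definition of $\circ_{(\G,t)}$ into the language of the substitution $\gamma$, reducing each statement to a combination of Theorem~\ref{lacompdetubisass} (associativity of $\gamma$), Proposition~\ref{propindepofgammafortubes} (commutation of $\gamma$ in disjoint or nested tubes), and a bookkeeping of the scalar prefactors $\alpha$, $(-1)^{|t|}$ and the signature. So the work splits into a ``tubing part'' (the sets of tubes on both sides agree) and a ``sign part'' (the scalar coefficients agree).

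\textbf{Part (1): two non-linked tubes.} First I would observe that when $t$ and $t\rq$ are not linked in $\G$, Lemma~\ref{separatedtubes} gives $(\G_t^*)_{t\rq}^* = (\G_{t\rq}^*)_t^* = \G_{t,t\rq}^*$, so both composites produce tubings on $\G$ built from $T_1$ (inside the tube $t$), $T_2$ (inside the tube $t\rq$), and $S$ (on the doubly-reconnected complement). On the tubing level, the left side builds $T_{\G}\circ_t (T_1)$ first, then substitutes along $t\rq$; the right side builds $T_{\G}\circ_{t\rq}(T_2)$ first, then substitutes along $t$. Because $t$ and $t\rq$ are disjoint and non-linked, case~(i) of the proof of Proposition~\ref{propindepofgammafortubes} applies essentially verbatim: no tube of $S$ gets attached to $t$ through $t\rq$ or vice versa, so the two orders of substitution give the same tubing of $\G$. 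For the scalar part, each side carries a factor $(-1)^{|t|}$ (resp.\ $(-1)^{|t\rq|}$) from the inner $\circ$ and an $\alpha$-factor from the outer $\circ$; one checks that $\alpha(t, T_1\circ_{(\G_{t\rq}^*,t)}S)$ equals $\alpha(t,S)$ (adding tubes inside $t$ does not change the comparison of minimal nodes relevant to $\alpha$), and similarly on the other side, and that the signatures cancel because transposing the blocks for $t$ and $t\rq$ past each other involves no edge of $\G$ (they are non-linked). This forces the correcting factor $\mathrm{max}\{\alpha(t,S),\alpha(t\rq,S)\}$: the only discrepancy is which of $t$, $t\rq$ is compared first to the maximal tubes of $S$, and the $\mathrm{max}$ records the sign needed to reconcile the two conventions. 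I would verify the four cases according to the signs of $\alpha(t,S)$ and $\alpha(t\rq,S)$ to pin this down.

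\textbf{Part (2): nested tubes $t\rq\subsetneq t$.} Here the tubing-level identity is exactly a special case of the associativity Theorem~\ref{lacompdetubisass}: on the left we first substitute $T_1\in\mathrm{Tub}((\G_t)_{t\rq}^*)$ into the nested position (giving $T_2\circ_{(\G_t,t\rq)}T_1$, a tubing of $\G_t$), then substitute the result along $t$ into $T_{\G}$; on the right we substitute along $t\rq$ outermost. Matching the notation, one takes ``$T$''$=T_{\G}$, the tube ``$t$'' of the theorem to be our $t\rq$, ``$S$''$=T_{\G}\circ_{t\rq}T_2$, the proper tube ``$s$''$=t\rq$, and the tubing ``$W$''$=$ the relevant piece; then $\widetilde s = \tilde t$ in the notation of the statement, and the theorem's equation $\gamma_t(T,\gamma_s(S,W))=\gamma_{\tilde s}(\gamma_t(T,S),W)$ becomes our identity. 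The sign bookkeeping here is lighter: both sides have inner and outer $\circ$-operations, but since $t\rq$ and $t$ are \emph{linked-or-nested}, the $\alpha$-prefactors are absent on some of the operations (the linked branch of Definition~\ref{partialcomp}(d)) or cancel, and $(-1)^{|t|}$ versus $(-1)^{|t\rq|}(-1)^{|t\setminus t\rq|}$ match, while the signatures agree because $\tilde t$ occupies, as a block, the same node positions as $t$ did. I would make these cancellations explicit.

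\textbf{Main obstacle.} The genuinely delicate point is the sign/scalar bookkeeping in part~(1)---in particular verifying that $\alpha$ of the outer operation, computed with respect to the \emph{larger} tubing produced by the inner operation, reduces to $\alpha(\cdot,S)$, and then carefully checking that across the four sign-cases of $(\alpha(t,S),\alpha(t\rq,S))$ the two composites differ exactly by $\mathrm{max}\{\alpha(t,S),\alpha(t\rq,S)\}$ and by no residual signature sign (this uses crucially that $t,t\rq$ are non-linked, so no edge of $\G$ straddles the two blocks). The tubing-theoretic content, by contrast, is essentially immediate from Theorem~\ref{lacompdetubisass} and Proposition~\ref{propindepofgammafortubes}.
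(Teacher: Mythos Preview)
Your overall strategy matches the paper's: unwind $\circ_{(\G,t)}$ in terms of $\gamma$, check that the underlying tubings coincide, then reconcile the scalar prefactors by a case analysis on $\alpha(t,S)$ and $\alpha(t',S)$. Two points need correction, however.

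First, your key reduction in part~(1) is not right. You claim that the outer $\alpha$---for instance $\alpha\bigl(t',\,\gamma_{t_{\G_{t'}^*}}(\tilde T_1,S)\bigr)$ on the left---``reduces to $\alpha(t',S)$'' because adding tubes inside $t$ does not change the minimal-node comparison. But the inner operation does more than add tubes inside $t$: it adds $t$ itself, which becomes a \emph{new maximal proper tube} of the tubing fed to the outer operation. This can flip the outer $\alpha$. Concretely, if $\alpha(t,S)=\alpha(t',S)=-1$ (both $t$ and $t'$ have minimal node below every maximal tube of $S$) and the minimal node of $t$ is smaller than that of $t'$, then $\alpha\bigl(t',\gamma(\tilde T_1,S)\bigr)=-1$ while $\alpha\bigl(t,\gamma(\tilde T_2,S)\bigr)=+1$; they are \emph{not} both equal to $-1$. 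The paper handles exactly this by writing both sides as $\alpha(\text{outer})\cdot\alpha(\text{inner})$ and then checking, case by case in $(\alpha(t,S),\alpha(t',S))$, that these products differ by $\max\{\alpha(t,S),\alpha(t',S)\}$. Your four-case plan will work, but only once you drop the false reduction and compare the products directly. Also: there is no signature factor in the definition of $\circ_{(\G,t)}$ (the $\mathrm{sgn}^\G(\sigma_t)$ enters only in $\partial$), so the passage about ``signatures cancel because $t,t'$ are non-linked'' is a red herring and should be removed.

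Second, for part~(2) your attempt to instantiate Theorem~\ref{lacompdetubisass} is more convoluted than necessary, and the matching of variables you give is garbled (you set both ``$t$'' and ``$s$'' to $t'$, and ``$W$'' is left unspecified). The paper does not route through the associativity theorem at all: it simply lists the tubes that appear in $\gamma_{t_\G}\bigl(\gamma_{t_{\G_t}}(T_2,T_1),S\bigr)$ and in $\gamma_{t_\G}\bigl(T_2,\gamma_{t_{\G_{t'}^*}}(T_1,S)\bigr)$---in each case these are the tubes of $T_2$, the tubes of $T_1$ (possibly merged with $t'$), and the tubes of $S$ (possibly merged with $t$)---and observes that the two lists coincide. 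This direct enumeration is both shorter and cleaner than pattern-matching against the associativity theorem.
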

\medskip

\begin{proof} For the first point, recall that $t$ and $t\rq $ are not connected, so $\G_{\{t,t\rq\}}^* = (\G_t^*)_{t\rq}^*=(\G_{t\rq}^*)_t^*$. It is immediate to verify that:
\begin{equation*}\gamma_{t_{\G}}({\tilde {T}}_2, \gamma _{t_{\G_{t\rq}^*}}({\tilde {T}}_1, S)) = \gamma_{t_{\G}}({\tilde {T}}_1, \gamma _{t_{\G_{t}^*}}({\tilde {T}}_2, S)).\end{equation*}

Note that:
\begin{align*}
T_2\circ _{(\G,t\rq)}(T_1\circ _{(\G_{t\rq}^*, t)} S) =& \alpha(t\rq, \gamma_{t_{\G_{t\rq}^*}}(T_1,S))\cdot \alpha(t,S) \gamma_{t_{\G}}({\tilde {T}}_2, \gamma _{t_{\G_{t\rq}^*}}({\tilde {T}}_1, S)),\\
T_1\circ _{(\G,t)}(T_2\circ _{(\G_{t}^*, t\rq)} S) = &\alpha(t, \gamma_{t_{\G_{t}^*}}(T_2,S))\cdot \alpha(t\rq,S) \gamma_{t_{\G}}({\tilde {T}}_1, \gamma _{t_{\G_{t}^*}}({\tilde {T}}_2, S)).\end{align*} But, we have that: \begin{itemize}
\item if $\alpha(t,S)= -1=\alpha(t\rq, S)$, then $\alpha (t\rq, \gamma_{t_{\G_{t\rq}^*}}(T_1,S))\cdot \alpha (t, \gamma_{t_{\G_{t}^*}}(T_2,S)) =1$,
\item if $\alpha (t,S) = 1 = \alpha(t\rq, S)$, then $\alpha (t\rq, \gamma_{t_{\G_{t\rq}^*}}(T_1,S)) =1= \alpha (t, \gamma_{t_{\G_{t}^*}}(T_2,S))$,
\item If $\alpha (t,S) \neq \alpha(t\rq, S)$, then $\alpha (t, \gamma_{t_{\G_{t}^*}}(T_2,S))=\alpha (t,S)$ and $\alpha (t\rq, \gamma_{t_{\G_{t\rq}^*}}(T_1,S)) = \alpha(t\rq, S)$,\end{itemize}
which implies that
\begin{equation*}\alpha(t\rq, \gamma_{t_{\G_{t\rq}^*}}(T_1,S))\cdot \alpha(t,S) =  {\mbox {max}\{\alpha (t,S); \alpha(t\rq, S)\}}\cdot \alpha(t, \gamma_{t_{\G_{t}^*}}(T_2,S))\cdot \alpha(t\rq ,S),\end{equation*}
and the first equality of the proposition follows immediately.
\bigskip

Suppose now that $t\rq \subsetneq t$. Note that a connected subgraph $w$ in $\G$ is a tube of $\gamma_{t_{\G}}(T_2,\gamma_{t_{\G_{t\rq}^*}}(T_1, S)$ if, and only if, it fulfills one of the following conditions :\begin{itemize}
\item $w$ is a tube of $T_2$,
\item $w$ a tube of $T_1$ which has no node connected to $t\rq$,
\item $w= {\tilde w}\cup t\rq$, with ${\tilde w}\in T_1$ is a tube linked to $t\rq$,
\item $w$ is a tube of $S$ which is not linked to $t$,
\item $w={\tilde w}\cup t$, with ${\tilde w}\in S$ is a tube linked to $t$.\end{itemize}

It is not difficult to see that the tubes of $\gamma_{t_G}(\gamma_{t_{\G_t}}(T_2, T_1), S)$ are exactly the connected subgraphs of $\G$ which satisfy the same conditions. So, we get the second equality.
\end{proof}
\medskip

Propositions \ref{propgenerators} and \ref{propscirc} imply the following result.

\begin{theorem}\label{th:generators} Let ${\mbox {CGraph}}$ be the set of all graded simple connected finite graphs. The vector space ${\mbox {\bf Tub}}$, equipped with the partially defined binary operations $\circ _{(\G, t)}$, for any $\G\in {\mbox {CGraph}}$ and any tube $t$ in $\G$, is the free object spanned by the set ${\mbox {CGraph}}$ and the products $\circ _{(\G, t)}$, under the relations described in Proposition \ref{propscirc}.\end{theorem}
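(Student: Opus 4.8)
The plan is to prove Theorem~\ref{th:generators} by exhibiting a mutually inverse pair of maps between ${\mbox {\bf Tub}}$ and the free algebra $\FFF$ generated by ${\mbox {CGraph}}$ under the operations $\circ_{(\G,t)}$ modulo the relations of Proposition~\ref{propscirc}. Writing $\FFF$ for that quotient of the absolutely free algebra on ${\mbox {CGraph}}$, there is a tautological morphism $\Phi\colon \FFF\to {\mbox {\bf Tub}}$ sending each generator $\G\in{\mbox {CGraph}}$ to $(\G,\{t_{\G}\})$ and each formal product $\circ_{(\G,t)}$ to the actual partial operation of Definition~\ref{partialcomp}; this is well defined on the quotient precisely because Proposition~\ref{propscirc} says the relations hold in ${\mbox {\bf Tub}}$. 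The content of the theorem is that $\Phi$ is a bijection, i.e.\ surjective and injective.

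Surjectivity is the easy half and is essentially Proposition~\ref{propgenerators}: every tubing $T$ of a connected graph $\G$ is obtained from $(\G,\{t_{\G}\})$ by iterated substitutions $\gamma_{t_{\G}}(\,\cdot\,,\{t\})$, and each such elementary substitution is, up to the nonzero scalar $\alpha(t,W)(-1)^{|t|}$ (or no scalar, in the linked case), one of the partial operations $\circ_{(\G',t')}$ applied to generators; so the image of $\Phi$ contains every $(\G,T)$ and hence all of ${\mbox {\bf Tub}}$. I would spell out the bookkeeping that relates the recursion of Proposition~\ref{propgenerators} to a well-formed parenthesization of $\circ$'s: at each stage one peels off a maximal tube $t^k$, which after forming reconnected complements becomes a tube in a smaller graph $\G_{T'}^*$, and the step ``add $t^k$'' is exactly a $\circ_{(\G_{T'}^*,\,-)}$-product with a one-tube generator on the right and the already-constructed element on the left.

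For injectivity I would set up the inverse map $\Psi\colon{\mbox {\bf Tub}}\to\FFF$ by induction on the number of proper tubes of $T$, using Theorem~\ref{faces}/Theorem~\ref{lacompdetubisass} to guarantee the construction is independent of choices. Concretely, given $(\G,T)$ with $\overline{T}\neq\emptyset$, pick a maximal proper tube $t\in\overline T$; then $T$ decomposes canonically as a substitution built from $T\vert_t\in{\mbox {Tub}(\G_t)}$ and the induced tubing $T_t^*\in{\mbox {Tub}(\G_t^*)}$ (which has strictly fewer proper tubes, since $t$ was maximal), and one sets $\Psi(\G,T)$ to be the corresponding formal product of $\Psi(\G_t,T\vert_t)$ and $\Psi(\G_t^*,T_t^*)$, divided by the scalar appearing in Definition~\ref{partialcomp}(d). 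The key point is that $\Psi$ is well defined: two different choices of maximal tube are reconciled by relation~(1) of Proposition~\ref{propscirc} (disjoint maximal tubes) together with relation~(2) and Theorem~\ref{lacompdetubisass} (nested configurations that arise in the recursion), and the scalars are tracked by exactly the identities on $\alpha$, ${\mbox {sgn}}^{\G}$ and $(-1)^{|t|}$ established in the proof of Proposition~\ref{propscirc}. One then checks $\Phi\circ\Psi=\mathrm{id}$ by unwinding the definition of the partial operations, and $\Psi\circ\Phi=\mathrm{id}$ by induction on the word length in $\FFF$, using the relations to bring any word into the canonical ``peel off a maximal tube'' normal form; this last step is really a normal-form/confluence argument showing that Proposition~\ref{propscirc} generates all coincidences.

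The main obstacle is the well-definedness of $\Psi$ — equivalently, proving that the relations of Proposition~\ref{propscirc} are \emph{complete}, not merely valid. This is where one must show that any two parenthesizations of substitution operations producing the same tubing are connected by a sequence of moves of types~(1) and~(2), and that the induced sign/scalar discrepancies cancel. The cleanest route is to fix, for each tubing, the canonical ``iteratively extract the maximal tube whose minimal node is largest'' bracketing (this is the choice that makes the $\alpha$-scalars bookkeeping deterministic), prove every bracketing reduces to it using only relations (1)–(2), and verify at each application of a relation that the two sides' scalars agree — which is exactly the scalar computation already done inside the proof of Proposition~\ref{propscirc}. Once well-definedness of $\Psi$ is in hand, the remaining verifications $\Phi\Psi=\mathrm{id}$ and $\Psi\Phi=\mathrm{id}$ are routine inductions.
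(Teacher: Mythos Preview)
Your proposal is considerably more detailed than the paper's own argument, which consists of the single sentence ``Propositions~\ref{propgenerators} and~\ref{propscirc} imply the following result'' placed immediately before the theorem statement. In other words, the paper treats freeness as essentially formal once generation (Proposition~\ref{propgenerators}) and a list of relations (Proposition~\ref{propscirc}) are in hand, and does not spell out the inverse map or the confluence/normal-form argument you outline.

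Your approach is the right way to make that implication rigorous, and it is essentially the same in spirit: surjectivity of $\Phi$ is Proposition~\ref{propgenerators}, and the relations of Proposition~\ref{propscirc} are what one must show are \emph{complete}. Your identification of the main obstacle---well-definedness of $\Psi$, i.e.\ that any two bracketings producing the same tubing are connected by moves of types~(1) and~(2) with matching scalars---is exactly the point the paper glosses over. The concrete strategy you propose (peel off maximal tubes in a fixed order determined by the minimal node, use relation~(1) to commute disjoint maximal tubes and relation~(2) to reassociate nested ones, and track the scalar identities already verified inside the proof of Proposition~\ref{propscirc}) is sound; two distinct maximal tubes are far apart, hence not linked, so relation~(1) applies to them, and the recursion is well-founded because $\vert\overline{T\vert_t}\vert+\vert\overline{T_t^*}\vert = \vert\overline{T}\vert-1$ when $t$ is maximal. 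So your proposal is correct and simply supplies the argument the paper leaves implicit.
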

\bigskip

Let us introduce a differential structure on ${\mbox {\bf Tub}}$.

\begin{notation} Let $T$ be a tubing of a graph $\G$, we denote by $\vert\vert T\vert\vert$ the dimension of the face of the polytope ${\mathcal K}\G$ determined by $T$. If $T$ has $k$ tubes and $\G$ has $n$ nodes, then
$\vert\vert T\vert\vert = n{-}k$.\end{notation}

\begin{definition} \label{defn:boundarymap} Let $\G$ be a finite connected simple graph, define the map $\partial:\K[{\mbox {Tub}(\G)}]\longrightarrow \K[{\mbox {Tub}(\G)}]$, where
$\K[{\mbox {Tub}(\G)}]$ denotes the vector space spanned by the set of tubings of $\G$, as the unique $\K$-linear endomorphism satisfying:\begin{enumerate}
\item $\partial (T_{\G})=\sum _{t\subsetneq \G} (-1)^{\vert t\vert} {\mbox {sgn}^{\G}(\sigma _t)} \{t_{\G},t\},$

where the sum is taken over all the tubes $t$ in $\G$ different from the universal tube $t_{\G}$, while the permutation $\sigma _t$ and the map ${\mbox {sgn}^{\G}}$ are the ones introduced at Definition \ref{partialcomp}.
\item  Define, for any tube $t$ in $\G$ and any pair of tubings $T\in {\mbox {Tub}(\G_t)}$ and $S\in {\mbox{Tub}(\G_t^*)}$,
\begin{equation*}\partial (T\circ_{(\G,t)}S) = \partial(T)\circ_{(\G,t)}S + (-1)^{\vert\vert T\vert\vert} T\circ_{(\G,t)}\partial (S).\end{equation*}\end{enumerate}\end{definition}

Theorem \ref{th:generators} shows that there exists a unique linear map $\partial$ satisfying both conditions. Note that $\vert\vert \partial (T)\vert\vert = \vert\vert T\vert\vert {- 1}$. Also note that if $T$ has the maximal number of tubes (so $T$ has dimension 0) then $\partial (T) =0$.

Here is an example of calculating the boundary recursively via Definition~\ref{defn:boundarymap}.

\includegraphics[width=5in]{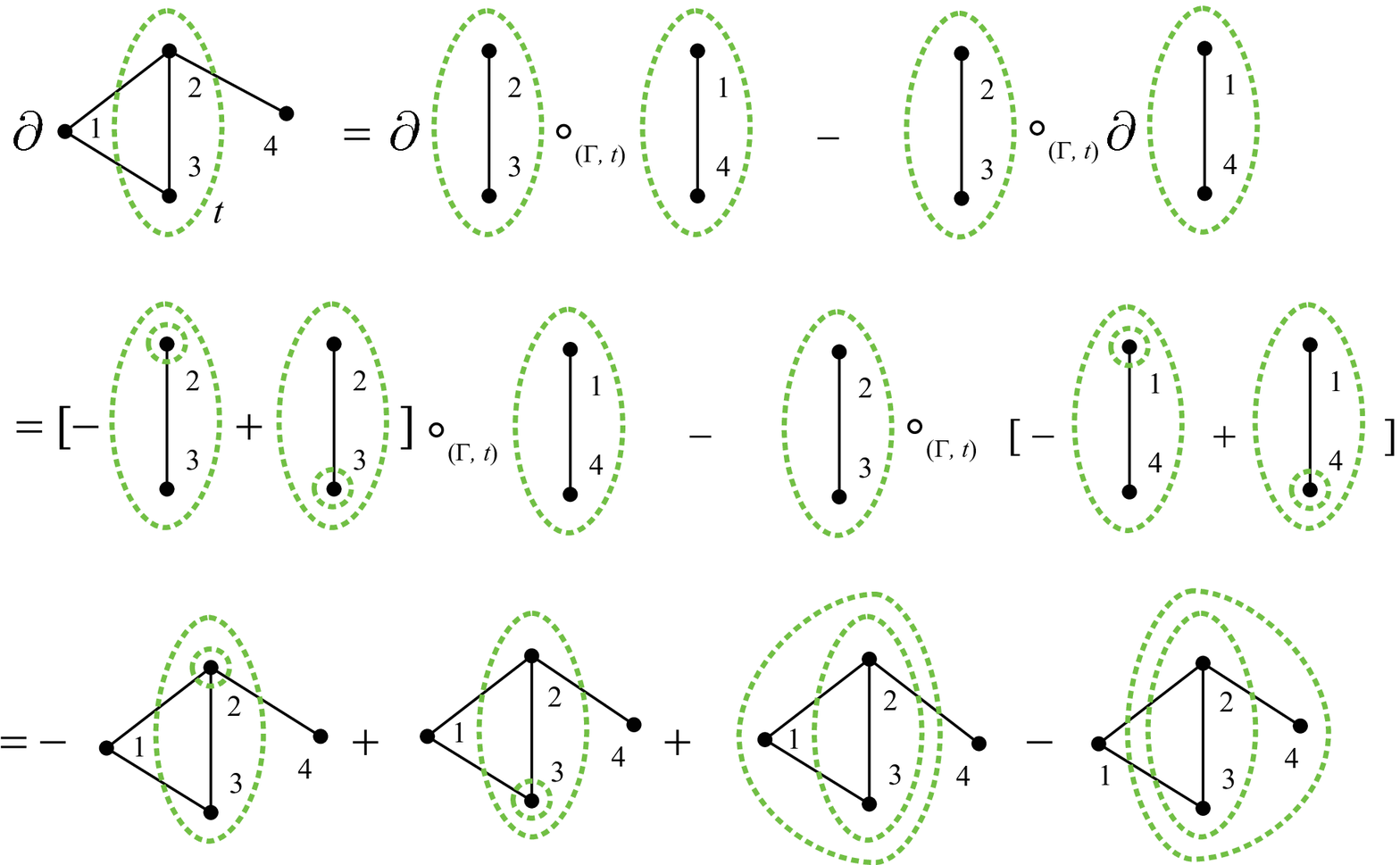}

\begin{proposition} \label{prop:boundary} The homomorphism $\partial $ satisfies that the composition $\partial \circ \partial = 0$.\end{proposition}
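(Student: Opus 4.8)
The plan is to prove that $\partial \circ \partial = 0$ by reducing it to a purely combinatorial statement about pairs of tubes, exploiting the fact (Theorem~\ref{th:generators}) that $\mathbf{Tub}$ is freely generated under the operations $\circ_{(\G,t)}$, together with the Leibniz rule of Definition~\ref{defn:boundarymap}. Concretely, I would argue by induction on $\|T\|$, the dimension of the face. The base case $\|T\| = 0$ is immediate since $\partial(T) = 0$ there. For the inductive step, I write $T = S \circ_{(\G,t)} W$ for an appropriate tube $t$ (using Proposition~\ref{propgenerators}, one can always peel off a maximal tube so that $T = \gamma_{t_\G}(T',\{t\})$-style decomposition applies), apply the Leibniz rule twice, and collect terms. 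The cross terms of the form $\partial(S)\circ_{(\G,t)}\partial(W)$ cancel in pairs because of the sign $(-1)^{\|S\|}$ versus $(-1)^{\|\partial(S)\|} = (-1)^{\|S\|-1}$; the remaining terms are $\partial\partial(S)\circ_{(\G,t)}W$ and $\pm S\circ_{(\G,t)}\partial\partial(W)$, which vanish by the inductive hypothesis applied to the strictly-smaller-dimension graphs $\G_t$ and $\G_t^*$.

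The genuinely substantive part, and the step I expect to be the main obstacle, is the base of this induction dressed up differently: one must check $\partial\partial(T_\G) = 0$ directly for the universal tubing, and more generally that the recursive definition is consistent, i.e. that computing $\partial\partial$ via the Leibniz rule is independent of how $T$ is decomposed into $\circ_{(\G,t)}$-operations. For $\partial\partial(T_\G)$, expanding the definition gives
\begin{equation*}
\partial\partial(T_\G) = \sum_{t\subsetneq \G}\, \sum \; (-1)^{\vert t\vert}(-1)^{\vert t'\vert}\,\mathrm{sgn}^{\G}(\sigma_t)\,\mathrm{sgn}(\ldots)\,\{t_\G, t, t'\},
\end{equation*}
where the inner sum runs over tubes compatible with $t$ (either nested inside $t$, contributing through $\partial$ on the $\G_t$-factor, or living in $\G_t^*$). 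The key point is that every $2$-tubing $\{t_\G, t, t'\}$ with $t' \subsetneq t$ arises in exactly two ways — once by first removing $t$ then $t'$ and once with $t$ as the ``inside'' tube — and the signs $(-1)^{\vert t\vert}\mathrm{sgn}^\G(\sigma_t)$ against $(-1)^{\vert t'\vert}\mathrm{sgn}^{\G_t}(\sigma_{t'})$ and their counterparts must be shown to be opposite. Similarly, a $2$-tubing with $t,t'$ far apart (disjoint and unlinked) arises twice, from the two orders of removal, and here Lemma~\ref{separatedtubes} guarantees $(\G_t^*)_{t'}^* = (\G_{t'}^*)_t^*$ so that the two contributions land on the same tubing; one checks the product of signatures picks up a factor $-1$ under the swap because of how $\sigma_t$ and $\sigma_{t'}$ compose and how the inversion count $\mathrm{inv}$ behaves when two blocks of nodes are transposed. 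This sign bookkeeping — verifying $\mathrm{sgn}^{\G}(\sigma_t)\,\mathrm{sgn}^{\G_t^*}(\sigma_{t'}) = -\,\mathrm{sgn}^{\G}(\sigma_{t'})\,\mathrm{sgn}^{\G_{t'}^*}(\sigma_t)$ in the far-apart case, and the analogous identity in the nested case including the $\alpha(t,S)$ factors hidden in $\circ_{(\G,t)}$ — is where all the work lies.

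I would organize the write-up as follows. First, state and prove an auxiliary sign lemma: for compatible tubes $t, t'$ of $\G$, the two composite signs obtained by removing them in either order differ by exactly $-1$ (splitting into the nested and far-apart sub-cases, and noting the $\alpha$-factors from Definition~\ref{partialcomp}(c) combine to the required sign, using the same case analysis as in the proof of Proposition~\ref{propscirc}). Second, observe that by Theorem~\ref{th:generators} it suffices to verify $\partial\partial = 0$ on the generators $T_\G$ and to verify that $\partial\partial$ respects each relation — but in fact the cleaner route is the dimension induction above, which only needs the Leibniz rule plus $\partial\partial(T_\G) = 0$, since any $T$ with $\|T\| \geq 1$ decomposes as $S\circ_{(\G,t)} W$ with $S, W$ tubings of strictly smaller graphs. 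Third, carry out the two-term cancellation in $\partial\partial(S\circ_{(\G,t)} W)$ using the Leibniz rule and the inductive hypothesis. The only place one cannot dodge computation is the sign lemma; everything else is formal. One should also double-check that $\partial$ as defined really does land in $\mathbf{Tub}$ (it does, since adding one compatible tube to a tubing yields a tubing, which is built into the definition) and that the recursion in Definition~\ref{defn:boundarymap} terminates, which it does because $\|S\|, \|W\| < \|T\|$.
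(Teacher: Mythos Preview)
Your approach is essentially the paper's: reduce via the Leibniz rule and an induction to the direct computation of $\partial^2(T_\G)=0$, split that computation into nested and far-apart pairs $(t,t')$, and verify that each pair cancels. The paper phrases the induction on the number $k$ of tubes in $T$ (with $k=1$ the substantive base case, and $k\geq 2$ handled by decomposing $T=\pm T\vert_t\circ_{(\G,t)}T_t^*$), which is cleaner than your dimension framing: $T_\G$ sits at the \emph{top} of the $\vert\vert T\vert\vert$ range and cannot itself be written as $S\circ_{(\G,t)}W$, so it does not fit as a base case of a $\vert\vert T\vert\vert$-induction. What you are really running is an induction on the number of nodes of $\G$ (or equivalently on $k$), with $T_\G$ checked separately at each stage; once reframed that way your argument and the paper's coincide.

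One correction to your sign heuristics. In the far-apart case the signature products are \emph{equal}, not opposite:
\begin{equation*}
{\mbox {sgn}^{\G}(\sigma_t)}\,{\mbox {sgn}^{\G_t^*}(\sigma_{t'})} \;=\; {\mbox {sgn}^{\G}(\sigma_{t'})}\,{\mbox {sgn}^{\G_{t'}^*}(\sigma_t)},
\end{equation*}
and the cancellation comes instead from $\alpha(t,\{t'\})+\alpha(t',\{t\})=0$ (exactly one of the two tubes has the smaller minimal node). In the nested case $t'\subsetneq t$ no $\alpha$-factor enters at all; again the signature products agree, and the extra $(-1)$ is supplied by the Leibniz sign $(-1)^{\vert\vert T_{\G_{t'}}\vert\vert}=(-1)^{\vert t'\vert-1}$ coming from the second summand in the derivation rule. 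So your sign lemma is right in spirit but the mechanism is swapped between the two cases.
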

\medskip

\begin{proof} Let $T\in {\mbox {Tub}(\G)}$ be a $k$-tubing. We proceed by recursion on $k$.

For $k=1$, we have that $T = T_{\G} $ is the tubing whose unique tube is the universal one. We have that:
\begin{equation*}\partial ^2(T_{\G}) = \sum _{t\subsetneq \G} (-1)^{\vert t\vert} {\mbox {sgn}^{\G}(\sigma _t)} \partial (\{t\}).\end{equation*}
But $\{t\} = T_{\G_t}\circ_{(\G,t)} T_{\G_t^*}$, so
\begin{align*}
\partial (\{t\}) = &\sum _{t\rq\subsetneq t} (-1)^{\vert t\rq\vert} {\mbox {sgn}^{\G_t}(\sigma_{t\rq})} \{t\rq\} \circ_{(\G, t)} T_{\G_t^*} +\\
\qquad & (-1)^{\vert t\vert -1} \sum _{t\rq \subsetneq \G_t^*} (-1)^{\vert t\rq\vert } {\mbox {sgn}^{\G_t^*}(\sigma _{t\rq})} \{t\}\circ_{(\G,t)} \{t\rq\}.\end{align*}

So, in $\partial ^2(T_{\G})$ we get two different class of $3$-tubings\begin{enumerate}
\item the tubings $\{t, t\rq\}$, where $t$ and $t\rq$ are not linked,
\item the tubings $\{t, t\rq\}$, where $t\rq \subsetneq t\}$.\end{enumerate}

Let us compute the coefficient of the tubing $\{ t, t\rq\}$ in $\partial ^2(T_{\G})$ for both cases:

$(1)$ In the first case, we get that the coefficient of $\{ t, t\rq\} = \gamma_{t_{\G}}(T_{\G_t}, \{t\rq\})= \gamma_{t_{\G}}(T_{\G_{t\rq }}, \{t\})$ is:\begin{enumerate}[(i)]
\item $(-1)^{\vert t\vert}{\mbox {sgn}^{\G}(\sigma _t)} (-1)^{\vert t\vert -1} (-1)^{\vert t\rq\vert} {\mbox {sgn}^{\G_t^*}(\sigma _{t\rq})}(-1)^{\vert t\vert}\alpha(t, \{t\rq\})$, coming from $\partial (\{t\})$,
\item $(-1)^{\vert t\rq\vert}{\mbox {sgn}^{\G}(\sigma _{t\rq})} (-1)^{\vert t\rq \vert -1} (-1)^{\vert t\vert} {\mbox {sgn}^{\G_{t\rq}^*}(\sigma _{t})}(-1)^{\vert t\rq\vert}\alpha(t\rq, \{t\})$, coming from $\partial (\{t\rq\})$.\end{enumerate}

Is is immediate to verify that, when $t$ and $t\rq$ are not linked in $\G$, we get that ${\mbox {sgn}^{\G}(\sigma _t)} {\mbox {sgn}^{\G_t^*}(\sigma _{t\rq})}={\mbox {sgn}^{\G}(\sigma _{t\rq})} {\mbox {sgn}^{\G_{t\rq}^*}(\sigma _{t})}$.
\medskip

So, the coefficient of $\{ t, t\rq\} $ in $\partial ^2(T_{\G})$ is:
\begin{align*}
{\mbox {sgn}^{\G}(\sigma _t)}{\mbox {sgn}^{\G_t^*}(\sigma _{t\rq})}&[(-1)^{3\vert t\vert+ \vert t\rq\vert-1}\alpha(t, \{t\rq\}) + (-1)^{3\vert t\rq\vert+ \vert t\vert-1}\alpha(t\rq, \{t\})]= \\
\qquad &{\mbox {sgn}^{\G}(\sigma _t)}{\mbox {sgn}^{\G_t^*}(\sigma _{t\rq})} (-1)^{\vert t\vert+ \vert t\rq\vert-1}[\alpha(t, \{t\rq\})+\alpha(t\rq, \{t\})] = 0,\end{align*} because $\alpha(t, \{t\rq\}) = - \alpha(t\rq, \{t\})$.
\medskip

$(2)$ When $t\rq \subsetneq t$, the tube $\{ t, t\rq\}$ appears twice in the computation of $\partial ^2(T_{\G})$:\begin{enumerate}[(i)]
\item in $\partial (\{ t\}) =\partial ( \gamma _{t_{\G}}(T_{\G_t}, t_{\G_t^*})$, its coefficient is
\begin{equation*}(-1)^{\vert t\vert}{\mbox {sgn}^{\G}(\sigma _t)}(-1)^{\vert t\rq\vert}{\mbox {sgn}^{\G_t}(\sigma _{t\rq})},\end{equation*}
\item in $\partial (\{ t\rq\}) =\partial ( \gamma _{t_{\G}}(T_{\G_{t\rq}}, t_{\G_{t\rq}^*})$, its coefficient is
\begin{equation*}(-1)^{\vert t\rq\vert}{\mbox {sgn}^{\G}(\sigma _{t\rq})}(-1)^{\vert t\rq\vert-1}(-1)^{\vert t\vert -\vert t\rq\vert}{\mbox {sgn}^{\G_{t\rq}^*}(\sigma _{t-t\rq})}.\end{equation*}\end{enumerate}
Again, it is easily seen that ${\mbox {sgn}^{\G}(\sigma _t)}{\mbox {sgn}^{\G_t}(\sigma _{t\rq})} = {\mbox {sgn}^{\G}(\sigma _{t\rq})}{\mbox {sgn}^{\G_{t\rq}^*}(\sigma _{t-t\rq})}$.

\noindent So, the coefficient of $\{ t, t\rq\}$ in $\partial ^2(T_{\G})$ is
\begin{equation*}{\mbox {sgn}^{\G}(\sigma _t)}{\mbox {sgn}^{\G_t}(\sigma _{t\rq})} [(-1)^{\vert t\vert + \vert t\rq\vert}+ (-1)^{\vert t\rq\vert+ \vert t\vert -1}] = 0,\end{equation*}
and we get that $\partial ^2(T_{\G}) = 0$.

When $k\geq 2$, we get that $T = (-1)^{\vert t\vert }\alpha(t,S) T\vert _t\circ _{\G, t} T_t^*$, for some tube $t\in T$, $t\neq t_{\G}$.

The number of tubes of $T\vert_t$ and the number of tubes of $T_t^*$ are both smaller than $k$, so a recursive argument on $\vert\vert T\vert\vert$ and the second condition of Definition \ref{defn:boundarymap} imply that $\partial^2(T)=0$.\end{proof}
\bigskip

\subsection{Non-symmetric operads and permutads}
\medskip

We begin by showing that non-symmetric operads (see \cite{DK} and \cite{LV}) and the permutads, defined in \cite{LR}) are described by the restriction of the data $({\mbox {\bf Tub}}, \circ _{(\G,t)})$ to the vector space spanned by the tubings $T\in {\mbox {Tub}(L_n)}$ of the linear graphs, respectively by the tubings $T\in {\mbox {Tub}(K_n)}$ of the complete graphs, for $n\geq 1$.
\medskip

\begin{notation}\label{permutations} We denote by $1_n$ the identity in the group $\Sigma _n$ of permutations of $n$ elements and by $\cdot $ the product in $\Sigma _n$. Given permutations $\sigma\in \Sigma _n$ and $\tau \in \Sigma _m$, the permutation $\sigma \times \tau \in \Sigma _{n+m}$ is the concatenation of permutations, defined by:
\begin{equation*}\sigma \times \tau (i):=\begin{cases} \sigma(i),&{\rm for}\ 1\leq i\leq n,\\
\tau(i-n)+n,&{\rm for}\ n+1\leq i\leq n+m.\end{cases}\end{equation*}.\end{notation}
\medskip

There exists a simple way to define non-symmetric operads and permutads, by generators and relations.

\begin{definition}\label{defnnonsymperm} \begin{enumerate}\item A {\it non symmetric operad} is a graded vector space $V=\bigoplus _{n\geq 0}V_n$ equipped with linear maps $\circ _i: V_n\otimes V_m\longrightarrow V_{n+m-1}$ satisfying the following conditions:\begin{enumerate}[(a)]
\item $(x\circ _i y)\circ _j z = (x\circ _{j-\vert y\vert +1} z)\circ_{i}y$, for $i < j{-}\vert y\vert +1$,
\item $x\circ _i (y\circ _j z) = (x\circ _i y)\circ _{j+i} z$,\end{enumerate}
for $x,y,z$ in $V$.
\item A {\it permutad} is a graded vector space $V=\bigoplus _{n\geq 0}V_n$ equipped with linear maps $\circ _{\sigma}: V_n\otimes V_m\longrightarrow V_{n+m-1}$, for $\sigma$ an $(n,m)$ shuffle, satisfying the following condition:\begin{enumerate}[(a)]
\item $(x\circ _{\sigma} y)\circ _{\tau} z = x\circ _{\delta} (y\circ_{\gamma}z)$,\end{enumerate}
whenever we have $\tau \cdot (\sigma \times 1_{\vert z\vert} )=  \delta \cdot (1_{\vert x\vert}\times \gamma )$.\end{enumerate} for $x,y,z$ in $V$.\end{definition}
\medskip

\begin{theorem}\label{teo:permutads} The restriction of the products $\circ _{\G,t}$ to graphs of type $\G = L_n$, for $n\geq 1$, define non symmetric operads. In a similar way, the restriction of the products $\circ _{\G,t}$ to graphs of type $\G = K_n$, for $n\geq 1$, define permutads.\end{theorem}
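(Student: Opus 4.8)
The statement is a compatibility result: it asserts that the relations of Proposition~\ref{propscirc}, when specialized to linear graphs $L_n$, become exactly the non-symmetric operad relations of Definition~\ref{defnnonsymperm}(1), and when specialized to complete graphs $K_n$, become the permutad relations of Definition~\ref{defnnonsymperm}(2). So the proof is essentially a ``dictionary'' argument, and it has three parts: (i) identify the generators and degrees; (ii) translate the partial compositions $\circ_{(\G,t)}$ into the familiar $\circ_i$ (resp. $\circ_\sigma$); (iii) check that the two relations in Proposition~\ref{propscirc} go over, under this translation, to relations (a)--(b) of Definition~\ref{defnnonsymperm}(1) (resp. to the single relation of Definition~\ref{defnnonsymperm}(2)). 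I would also need to observe that the families $\{L_n\}_{n\ge 1}$ and $\{K_n\}_{n\ge 1}$ are closed under the two operations $\G\mapsto\G_t$ and $\G\mapsto\G_t^*$ that appear in Theorem~\ref{th:generators}, so that the restricted structure is genuinely self-contained; for $L_n$ an induced connected subgraph is a shorter path and a reconnected complement of a path along a sub-path is again a path, and likewise for $K_n$ everything stays complete, which is already remarked in the Introduction.

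\textbf{The linear case.} For $\G = L_n$, a tube $t$ is a connected sub-path, i.e. an interval $\{i, i+1,\dots, i+m-1\}\subseteq[n]$; here $\G_t = L_m$ and $\G_t^* = L_{n-m+1}$, where the new node replacing $t$ sits in position $i$. Thus $(S\circ_{(L_n,t)}W)$ with $S\in\mathrm{Tub}(L_m)$, $W\in\mathrm{Tub}(L_{n-m+1})$ has the shape of a classical partial composition $W\circ_i S$ of planar rooted trees; I would make this identification explicit via the bijection (recalled in the Introduction, item~(1)) between $\mathrm{Tub}(L_k)$ and planar rooted trees with $k$ leaves, under which substitution $\gamma_{t_{\G}}(T_{\G}\circ_t S, W)$ is precisely grafting. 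Since the linking condition is vacuous for intervals that are far apart in a path only when they are genuinely disjoint non-adjacent intervals, one must track the sign factor $\alpha(t,W)(-1)^{|t|}$: I expect it to disappear (up to the conventions already fixed in Definition~\ref{defn:boundarymap}) or to be absorbable into a rescaling of generators, because for the linear graph the relevant intervals, when ``not linked,'' are of the far-apart type, and $\alpha$ is then determined by the relative position of minimal nodes exactly as in the classical commuting-compositions relation. With this dictionary, relation~(2) of Proposition~\ref{propscirc} (nested tubes $t'\subsetneq t$) becomes the ``sequential'' associativity $x\circ_i(y\circ_j z) = (x\circ_i y)\circ_{i+j}z$, and relation~(1) (unlinked tubes) becomes the ``parallel'' relation $(x\circ_i y)\circ_j z = (x\circ_{j-|y|+1}z)\circ_i y$; the index shifts $j-|y|+1$ and $j+i$ are exactly what the renumbering of nodes of $\G_t$ and $\G_t^*$ produces.

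\textbf{The complete case.} For $\G = K_n$, by Example~\ref{excomplete graph} a tubing is a surjection $[n]\to[r]$ and a tube $t$ of size $m$ gives $\G_t = K_m$, $\G_t^* = K_{n-m+1}$; the permutation $\sigma_t$ of Definition~\ref{partialcomp}(a) records which shuffle the tube $t$ determines, and one sees that $\circ_{(K_n,t)}$ is exactly the product $\circ_\sigma$ of a permutad, where $\sigma$ is the $(m, n-m)$-shuffle $\sigma_t$. This is precisely the correspondence established in \cite{LR}, which I would cite, and then the task reduces to matching Proposition~\ref{propscirc}'s two relations with the single permutad relation $(x\circ_\sigma y)\circ_\tau z = x\circ_\delta(y\circ_\gamma z)$ under the shuffle-composition constraint $\tau\cdot(\sigma\times 1_{|z|}) = \delta\cdot(1_{|x|}\times\gamma)$. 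Here the main subtlety—and the step I expect to be the real obstacle—is bookkeeping: in $K_n$ \emph{every} pair of distinct tubes that meet is nested, and disjoint tubes of $K_n$ are always far apart but never linked, so relation~(1) of Proposition~\ref{propscirc} with the factor $\max\{\alpha(t,S),\alpha(t',S)\}$ must be shown to coincide with the permutad relation for the shuffles that commute, while relation~(2) for $t'\subsetneq t$ must give the permutad relation for the shuffles that compose sequentially. I would verify that the two possible configurations of shuffles $\tau\cdot(\sigma\times 1)=\delta\cdot(1\times\gamma)$ are exactly the ``two tubes nested'' and ``two tubes far apart'' cases, so that Proposition~\ref{propscirc} supplies precisely one permutad axiom in each case; the sign $\alpha$ is then checked to match the sign convention of \cite{LR}, or to be trivial in the non-differential setting. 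Thus the whole theorem follows from Theorem~\ref{th:generators} and Proposition~\ref{propscirc} by restriction, once these identifications are in place; I do not expect any genuinely new combinatorial input beyond careful index- and sign-tracking.
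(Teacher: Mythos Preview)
Your linear-graph argument is essentially the paper's own: identify tubes in $L_n$ with intervals, match $\circ_{(L_{n+m},t)}$ with $\circ_i$, and read off the two non-symmetric operad axioms from the two relations of Proposition~\ref{propscirc}. That part is fine.

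The complete-graph case, however, contains a genuine error. You write that ``disjoint tubes of $K_n$ are always far apart but never linked,'' and you then plan to match relation~(1) of Proposition~\ref{propscirc} (the ``unlinked'' relation) to a commuting-shuffles permutad relation. The combinatorics is the opposite: in $K_n$ every pair of nodes is joined by an edge, so any two disjoint nonempty subsets have an edge between them and are therefore \emph{linked}, never far apart. Consequently no tubing of $K_n$ contains two disjoint tubes (this is exactly why Example~\ref{excomplete graph} describes tubings of $K_n$ as \emph{chains} $t^1\subsetneq\cdots\subsetneq t^r$), and relation~(1) of Proposition~\ref{propscirc} is \emph{vacuous} for $\G=K_n$. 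The single permutad axiom of Definition~\ref{defnnonsymperm}(2) must therefore come entirely from relation~(2), the nested case $t'\subsetneq t$. Your plan to split the permutad axiom into a ``nested'' piece and a ``far apart'' piece cannot work, and your proposed sign analysis of $\max\{\alpha(t,S),\alpha(t',S)\}$ in this setting has no content. Once you correct this --- drop relation~(1) for $K_n$ and show that relation~(2) alone encodes the shuffle constraint $\tau\cdot(\sigma\times 1_{|z|})=\delta\cdot(1_{|x|}\times\gamma)$ --- the argument goes through as in the paper.
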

\medskip

\begin{proof} Note that a tube $t$ on a linear graph $L_n$ is of type $t_{i,r}= \{i+1,\dots ,i+r\}$. So, when the graph $\G$ is $L_n$ for some $n\geq 1$, the unique products $\circ _{\G, t}$ are of the form $\circ _{L_n, t_{i,r}}$. If we fix $n$ and $m$, then the restriction of the $\circ _{\G,t}$ to the linear graphs is given by:
\begin{equation*}\circ _{(L_{n+m}, t_{i,m})}: V_n\otimes V_m\longrightarrow V_{n+m}.\end{equation*}
So, for $n$ and $m$ fixed, we get that $\circ _i = \circ _{(L_{n+m}, t_{i,m})}$.

The first relation of non-symmetric operads in Definition \ref{defnnonsymperm} is just the translation to this context of relation $(1)$ of Proposition \ref{propscirc}, while the second relation corresponds to relation $(2)$ of the same Proposition.
\medskip

For permutads, note that for $\G = K_n$, we have that:\begin{enumerate}[(i)]
\item A tube $t$ in $K_n$ is defined by a sequence $1\leq i_1<\dots <i_r \leq n$ of $r$ integers. So, a tube $t=\{ i_1<\dots <i_r \}$ of $K_n$ is identified with the unique $(r, n{-}r)$-shuffle $\sigma _t$ such that $\sigma _t(j)= i_j$, for $1\leq j\leq r$. This point gives the identification of the product $\circ _{K_n,t}$ with $\circ _{\sigma _t}$.
\item As any pair of nodes in $K_n$ is connected, there do not exist tubes $t$ and $t\rq$ in any tubing $T$ of $K_n$ such that $t\cap t\rq = \emptyset $. So, the relation $(1)$ of Proposition \ref{propscirc}, does not apply when $\G = K_n$. Relation $(2)$ of Proposition \ref{propscirc} gives the relations of the products $\circ _{\sigma}$ of Definition \ref{defnnonsymperm}.\end{enumerate}
\end{proof}

\bigskip

\bigskip

\section{Algebraic constructions on ${\mbox {\bf DTub}}$}
\bigskip

Let ${\mbox {\bf DTub}}$ denote the $\K$-vector space spanned by the tubings on all simple finite graphs (not necessarily connected), equipped with a total order on the set of nodes. We have studied in the first sections a way to define M. Carr and S. Devadoss graph associahedra as a free differential object spanned by the set of all finite connected simple graphs, for some algebraic structure whose operations are given by connected subgraphs of a graph with the natural relations satisfied by these subgraphs. In the present section we want to show that ${\mbox {\bf DTub}}$ is the free associative trialgebra spanned by ${\mbox {\bf Tub}}$,
which completes the algebraic description of graph associahedra.
\bigskip

\subsection{${\mbox {\bf DTub}}$ as the free ${\mbox {Trias}\rq}$ algebra spanned by ${\mbox {\bf Tub}}$}
\medskip

J.-L. Loday introduced (see \cite{Lod}) associative dialgebras, as the enveloping algebras of Leibniz algebras. The non-symmetric operad of associative dialgebras is completely described on the space spanned by the vertices of the standard simplexes. We know, see \cite{CD}, that the standard simplex of dimension $n$ is the graph associahedra ${\mathcal K}C_{n+1}$ of the simple graph with $n+1$ nodes and no edge. In \cite{Cha}, F. Chapoton introduced the differential operad ${\mbox {Trias}\rq}$, whose free object on one element is the vector space spanned by all the faces of standard
simplexes, equipped with the usual boundary map, and three binary products.

\begin{definition} \label{triass} (F. Chapoton) A {\it ${\mbox {Trias}\rq}$ algebra} is a graded differential vector space $(V, d )$ equipped with three associative products $\vdash$, $\times$ and $\dashv$, which satisfy the following relations:\begin{enumerate}[(i)]
\item $(x\vdash y)\dashv z =  x\vdash (y\dashv z)$,
\item $(x\dashv y)\dashv z= x\dashv (y\vdash z)$,
\item $(x\dashv y)\vdash z = x\vdash (y\vdash z)$,
\item $(x\vdash y)\times z= x\vdash (y\times z)$,
\item $(x\times y)\dashv z= x\times (y\dashv z)$,
\item $(x\dashv y)\times z = x\times (y\vdash z)$,
\item $(x\times y)\vdash z = 0 = x\dashv (y\times z)$,
\item $d(x\times y) = d(x)\times y + (-1)^{\vert x\vert +1} x\times d(y) + (-1)^{\vert x\vert} (x\dashv y - x\vdash y)$,
\item $d(x\vdash y) = d(x)\vdash y + (-1)^{\vert x\vert} x\vdash d(y)$,
\item $d(x\dashv y) = d(x)\dashv y + (-1)^{\vert x\vert} x\dashv d(y)$,\end{enumerate}
for $x,y,z\in D$, where $\vert x\vert$ is the degree of an element $x\in V$, the degree of the products $\vdash$ and $\dashv$ is $0$, the product $\times $ is of degree $1$ and the differential $d$ is of degree $-1$.\end{definition}
\medskip

Our definition simplifies the signs of F. Chapoton\rq s original one, but it is easy to see that the definitions are equivalent.
\medskip

Let $\G= \G_1\coprod \dots \coprod \G_r$ be a graph, where $\G_i$ is connected for $1\leq i\leq r$. In Definition \ref{tubingdisconnected} we described the tubings of $\G$.
\medskip

\begin{notation}\label{previousdiass} Let $\G= \G_1\coprod \dots \coprod \G_r$ be a graph, where $\G_i$ is connected for $1\leq i\leq r$, and let $T$ be a tubing of $\G$. Recall that, when $r=1$, we denote by ${\overline T}$ the set of all proper tubes of $T$, that is ${\overline T}=T\setminus \{ t_{\G}\}$. For $r > 1$, let $T^c$ denote the union of $T$ and the set $\{ t_{\G_i}\mid 1\leq i\leq r\}$ of all universal tubes of the graphs $\G_i$, for $1\leq i\leq r$.
\medskip

For $r> 1$, and any tubing $T= T_1\coprod \dots \coprod T_r \in {\mbox {Tub}(\G)}$, there exists at least one integer $1\leq k\leq r$ such that $T_k$ is not a tubing of $\G_k$ (that is $T_k\cup \{t_{\G_k}\} \in {\mbox {Tub}(\G_k)}$. Denote by ${\mbox {def}(T)}$ the number of positive integers $1\leq k\leq r$ such that $T_k$ does not contain the universal tube $t_{\G_k}$.

Let $\Omega = \Omega _1\coprod \dots \coprod \Omega _s$ be another graph, with $\Omega _j$ connected for $1\leq j\leq s$. For $T$ be a collection of compatible tubes in $\G$ and $S$ a collection of compatible tubes in $\Omega$, we denote by $T\coprod S$ the collection $T\bigcup (S+n)$ of compatible tubes in $\G\coprod \Omega$, where $n$ is the number of nodes of $\G$.
\end{notation}
\medskip

\begin{definition} \label{defn:triass} Let $\G = \G_1\coprod \dots \coprod \G_r$ and $\Omega = \Omega _1\coprod \dots \coprod \Omega _s$ be two graphs such that $\G_i$ and $\Omega_j$ are connected, for $1\leq i\leq r$ and $1\leq j\leq s$. For any pair of tubings $T\in {\mbox{Tub}(\G)}$ and $S\in {\mbox {Tub}(\Omega)}$, define in ${\mbox {Tub}(\G\coprod \Omega)}$ the tubings:\begin{enumerate}[(a)]
\item $T\vdash S:=\begin{cases} T^c\coprod {\overline S}&, {\rm for}\ s=1\ {\rm and}\ {\mbox {def}(T)}\leq 1,\\
T^c\coprod S&, {\rm for}\ s>1\ {\rm and}\ {\mbox {def}(T)}\leq 1,\\
0 & \ {\rm otherwise},\end{cases}$
\item $T\dashv S:=\begin{cases} {\overline T}\coprod S^c&, {\rm for}\ r=1\ {\rm and}\ {\mbox {def}(S)}\leq 1,\\
T\coprod S^c&, {\rm for}\ r>1\  {\rm and}\ {\mbox {def}(S)}\leq 1,\\
0 & \ {\rm otherwise},\end{cases}$
\item $T\times S := \begin{cases}({\overline T}\coprod {\overline S})&, {\rm for}\ r=s=1,\\
{\overline T}\coprod S&, {\rm for}\ r=1\ {\rm and}\  s>1,\\
T\coprod {\overline S}&, {\rm for}\ r>1\ {\rm and}\  s=1,\\
T\coprod S&, {\rm for}\ r>1\ {\rm and}\  s>1.\end{cases}$\end{enumerate}

The products are extended to all the vector space ${\mbox {\bf DTub}}$ by linearity.\end{definition}
\medskip

The proof of the following result is immediate.

\begin{lemma} \label{diasstruct} The vector space ${\mbox {\bf DTub}}$, equipped with the products $\vdash$, $\dashv$ and $\times$, satisfies conditions $i)$--$vii)$ of Definition \ref{triass}. Moreover, the vector space ${\mbox {\bf DTub}}$ is freely generated on ${\mbox {\bf Tub}}$ by the products $\vdash$, $\dashv$ and $\times$, under the relations $i)$--$vii)$. \end{lemma}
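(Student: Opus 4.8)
The plan is to verify each of the relations (i)--(vii) by a direct case analysis on the number of connected components and the ``defect'' statistic ${\mbox {def}(\cdot)}$, and then to establish freeness by exhibiting a normal form for parenthesized words in the three products. First I would observe that all three products act on the underlying collections of tubes simply by disjoint union, $T\coprod S$ (with a shift of node labels), the only subtlety being \emph{which} universal tubes $t_{\G_i}$, $t_{\Omega_j}$ get included or deleted, and when the result is forced to be $0$. So I would set up the bookkeeping once: for a tubing $T$ on $\G=\G_1\coprod\cdots\coprod\G_r$, record the pair $(r,{\mbox {def}(T)})$ together with the raw tube-set, and note that $T^c$ adds all missing universal tubes (so ${\mbox {def}(T^c)}=0$) while ${\overline T}$ deletes $t_{\G}$ in the connected case. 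Each product then has a predictable effect on this data, e.g. $T\vdash S$ is defined (nonzero) exactly when $s=1,\,{\mbox {def}(T)}\le 1$ or $s>1,\,{\mbox {def}(T)}\le 1$, and in all surviving cases its underlying tube-set is $T^c\coprod({\overline S}\text{ or }S)$.

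Next I would run through relations (i)--(vii). For (i)--(iii), which only involve $\vdash$ and $\dashv$, both sides live on $\G\coprod\Omega\coprod\Xi$ and the verification reduces to checking that (a) the two sides are simultaneously zero, and (b) when nonzero they have the same underlying tube-set. For (a) the key point is that $(x\vdash y)$ and $(x\dashv y)$ always have ${\mbox {def}}\le$ the corresponding bound, so the outer product's definedness condition translates into a condition on the inner operands that is symmetric between the two sides; I would tabulate the four or so subcases according to whether $\Xi$ (resp.\ $\G$) is connected. For (b) both sides produce $\G^c\coprod(\text{middle})\coprod(\text{last})^{(c\text{ or }\overline{\ })}$ with the same recipe. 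Relations (iv)--(vi) are the analogous statements mixing in $\times$, and (vii) ($ (x\times y)\vdash z = 0 = x\dashv(y\times z)$) is the cleanest: $x\times y$ never contains the universal tube of its first component when that component is connected, hence ${\mbox {def}(x\times y)}\ge 1$ and can equal $2$, forcing the $\vdash$ to land in the ``otherwise'' branch; dually for $x\dashv(y\times z)$. I expect this is genuinely immediate once the defect bookkeeping is in place, which is presumably why the authors call the proof immediate.

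For the freeness statement I would argue as follows. By Definition \ref{tubingdisconnected} a tubing on a disconnected graph $\G=\G_1\coprod\cdots\coprod\G_r$ is a list $(T_1,\dots,T_r)$ in which a nonempty prefix-selected subset $\{i_1<\dots<i_s\}$ of the components carries $\overline{W_{i_j}}$ (universal tube deleted) and the rest carry honest tubings $T_i\in{\mbox {Tub}(\G_i)}$. Each $\G_i$ with its tubing is, by definition, an element of ${\mbox {\bf Tub}}$, and I would show by induction on $r$ that any such list is obtained from the generators in ${\mbox {\bf Tub}}$ by a unique reduced word in $\vdash,\dashv,\times$: the leftmost component is attached by $\vdash$ (if it is the first ``defective'' one), the last defective component triggers a $\dashv$, intermediate ones a $\times$, and the defining relations (i)--(vii) are exactly the confluence relations that collapse all parenthesizations of such a word to a canonical left-combed form while prohibiting the illegal patterns (two $\times$'s straddling a $\vdash$, etc.). Concretely I would define a candidate linear map from the free ${\mbox {Trias}\rq}$-algebra (without differential) on ${\mbox {\bf Tub}}$ to ${\mbox {\bf DTub}}$, check it is surjective using the above decomposition, and check injectivity by showing the relations (i)--(vii) already impose a normal form (a word $g_1 \star_1 g_2 \star_2 \cdots \star_{r-1} g_r$ associated left-to-right, with each $\star_k\in\{\vdash,\dashv,\times\}$ subject to: at most one $\dashv$, and it is the last operation; $\vdash$ only as the first; $\times$ in between) of exactly the right cardinality, indexed by disconnected graphs together with a choice of defective-component subset.

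The main obstacle I anticipate is not relations (i)--(vii) themselves — those are a finite mechanical check — but pinning down the normal-form/spanning bijection precisely enough that injectivity is genuine and not circular: one must show that the relations (i)--(vii) are \emph{sufficient} to reduce an arbitrary parenthesization to the canonical form (a small rewriting-confluence argument), and that two distinct disconnected graphs-with-defect-data never coincide as elements of ${\mbox {\bf DTub}}$, which follows because the underlying tube-sets already remember $\G$ and the list of deleted universal tubes remembers the defective subset. Once that correspondence is nailed down, comparing dimensions in each degree finishes the freeness claim.
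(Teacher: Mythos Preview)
The paper gives no proof of this lemma; it simply declares ``the proof of the following result is immediate.'' So there is nothing to compare against directly, and your plan of a case-by-case verification of (i)--(vii) followed by a freeness argument is exactly the content that ``immediate'' is hiding. Your bookkeeping via the pair $(r,{\mbox{def}}(T))$ and the observation that all three products are just $T\coprod S$ with different rules for inserting or deleting universal tubes is the right organizing principle.

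There is, however, a genuine error in your freeness argument. Your proposed normal form ``left-combed, with $\vdash$ only as the first operation, at most one $\dashv$ and only as the last operation, $\times$ in between'' gives the wrong count. For three generators you would get only the four words $L_{\vdash\dashv},\,L_{\vdash\times},\,L_{\times\dashv},\,L_{\times\times}$, whereas the free ${\mbox{Trias}'}$-algebra on one generator has dimension $2^3-1=7$ in arity~$3$, matching the $7$ nonempty subsets of $[3]$ (the choice of ``defective'' components). Concretely, your normal form cannot produce the defect pattern $\{1,3\}$ (first and third components missing their universal tube, middle one complete), which is realized by $(x\dashv y)\times z = x\times (y\vdash z)$; nor can it produce $\{1\}$ alone or $\{3\}$ alone. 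The relations (ii), (iii), (vi) are not mere reparenthesizations: they change which operation symbols appear, so a left-combed word does \emph{not} retain its sequence of $\star_k$'s under rewriting. The correct bijection is between tubings on $\G_1\coprod\cdots\coprod\G_r$ and pairs (a nonempty subset $D\subseteq[r]$, a tuple $(T_1,\dots,T_r)\in\prod_i{\mbox{Tub}}(\G_i)$); to get a normal-form word you must allow all operation sequences that correspond to an arbitrary nonempty $D$, not just to a contiguous block. One clean fix is simply to bypass the normal form and compare dimensions directly: ${\mbox{Trias}'}(r)$ has dimension $2^r-1$ by Chapoton, and your own description of tubings on $r$-component graphs via Definition~\ref{tubingdisconnected} gives the same $2^r-1$ choices of defect set, so the evident map from the free algebra onto ${\mbox{\bf DTub}}$ is bijective on bases. (This is in fact what the paper does in the proof of Theorem~\ref{th:DTub}.)

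A minor slip in your treatment of (vii): you write ``${\mbox{def}}(x\times y)\ge 1$ and can equal $2$,'' but in fact ${\mbox{def}}(x\times y)\ge 2$ \emph{always} (each factor contributes at least one defective component in every case of Definition~\ref{defn:triass}(c)), and it is this, not merely ``can equal $2$,'' that forces $(x\times y)\vdash z$ into the zero branch.
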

\medskip

As ${\mbox {\bf DTub}}$ is freely generated by the vector space ${\mbox {\bf Tub}}$, and ${\mbox {\bf Tub}}$ is a differential graded vector space, with the differential map $\partial$  described in Definition \ref{defn:boundarymap}, there exists a unique way to extend $\partial $ to a differential map $d$ on ${\mbox {\bf DTub}}$ satisfying conditions $viii)$--$x)$ of Definition \ref{triass}.
\medskip

In Theorem \ref{th:generators}, we proved that ${\mbox {\bf Tub}}$ is a free object spanned by the set ${\mbox {CGraph}}$ of all graded simple connected finite graphs, for an algebraic structure whose operations are given by binary products $\circ _{(\G, t)}$, for any $\G\in {\mbox {CGraph}}$ and any tube $t$ in $\G$, under the relations given in Proposition \ref{propscirc}. For ${\mbox {\bf DTub}}$, we have the following result.
\medskip

\begin{theorem} \label{th:DTub} The vector space ${\mbox {\bf DTub}}$, equipped with the products $\vdash$, $\dashv$ and $\times$ and the differential map $d$, is the free ${\mbox {Trias}\rq}$ algebra generated by the differential graded space $({\mbox {\bf Tub}}, \partial)$ .\end{theorem}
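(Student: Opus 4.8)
The plan is to bootstrap from Lemma~\ref{diasstruct}, which already identifies ${\mbox {\bf DTub}}$, equipped with the three products $\vdash,\dashv,\times$ alone, with the \emph{free} graded algebra generated by ${\mbox {\bf Tub}}$ for the relations $i)$--$vii)$ of Definition~\ref{triass}. It therefore remains to install the differential and to check the universal property, which I would do in three steps: (a) the map $\partial$ on ${\mbox {\bf Tub}}$ extends to a \emph{unique} degree $-1$ operator $d$ on ${\mbox {\bf DTub}}$ obeying the Leibniz-type rules $viii)$--$x)$; (b) $d\circ d=0$; (c) the datum $({\mbox {\bf DTub}},d,\vdash,\dashv,\times)$ has the universal property of the free ${\mbox {Trias}\rq}$ algebra on the chain complex $({\mbox {\bf Tub}},\partial)$.

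For (a): since, by Lemma~\ref{diasstruct}, every element of ${\mbox {\bf DTub}}$ is a linear combination of iterated $\vdash,\dashv,\times$-products of elements of ${\mbox {\bf Tub}}$ subject only to $i)$--$vii)$, one defines $d$ by induction on the number of product symbols occurring in a word, using $d|_{{\mbox {\bf Tub}}}=\partial$ together with $viii)$--$x)$ as the inductive clauses. Well-definedness then amounts to checking that the two sides of each relation $i)$--$vii)$ receive equal values under these clauses. For the rules involving only $\vdash$ and $\dashv$ this is the usual computation for a derivation; the delicate point is the interaction of the correction term $(-1)^{|x|}(x\dashv y-x\vdash y)$ from $viii)$ with $iv)$--$vii)$. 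For example, applying $d$ to $vii)$, $(x\times y)\vdash z=0$, and expanding $d(x\times y)$, one kills the two $\times$-terms again by $vii)$ and is left with $(-1)^{|x|}\bigl((x\dashv y)\vdash z-(x\vdash y)\vdash z\bigr)$, which vanishes by $iii)$ together with associativity of $\vdash$; the analogous cancellations for $iv)$, $v)$, $vi)$ are precisely what dictate the signs in $viii)$. Uniqueness of $d$ is immediate, its value being forced on generators and on products. Using the sign equivalence between Definition~\ref{triass} and F. Chapoton's original axioms noted above, one could alternatively invoke directly that ${\mbox {Trias}\rq}$ is a differential operad, which already guarantees the existence of the free differential ${\mbox {Trias}\rq}$-algebra functor.

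For (b), $d\circ d$ is built up recursively through $viii)$--$x)$: on a product $u\star v$ with $\star\in\{\vdash,\dashv,\times\}$, expanding $d^2(u\star v)$ by the Leibniz clauses, the mixed terms cancel in pairs and the correction term of $viii)$ squares away, so that only $d^2u$ and $d^2v$ survive in the appropriate slots. Hence $d^2=0$ on ${\mbox {\bf DTub}}$ as soon as $d^2=\partial^2=0$ on ${\mbox {\bf Tub}}$, which is Proposition~\ref{prop:boundary}.

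For (c), let $(A,d_A,\vdash,\dashv,\times)$ be any ${\mbox {Trias}\rq}$ algebra and $f\colon({\mbox {\bf Tub}},\partial)\to(A,d_A)$ a morphism of differential graded vector spaces. By Lemma~\ref{diasstruct} there is a unique morphism $F\colon{\mbox {\bf DTub}}\to A$ of algebras for $\vdash,\dashv,\times$ extending $f$. One then checks $F\circ d=d_A\circ F$ by induction on word length: on ${\mbox {\bf Tub}}$ this is the hypothesis that $f$ is a chain map, and on a product $u\star v$ both $F(d(u\star v))$ and $d_A(F(u\star v))$ expand through the \emph{same} clauses $viii)$--$x)$ into expressions in $F(du)$, $F(dv)$, $F(u)$, $F(v)$ and the products of $A$, which agree by the inductive hypothesis since $F$ intertwines $\vdash,\dashv,\times$. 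Thus $F$ is the unique morphism of ${\mbox {Trias}\rq}$ algebras extending $f$, and ${\mbox {\bf DTub}}$ is the free ${\mbox {Trias}\rq}$ algebra on $({\mbox {\bf Tub}},\partial)$. I expect the main obstacle to be the consistency verification in step (a), namely reconciling the sign bookkeeping of the correction term in $viii)$ with the mixed relations $iv)$--$vii)$.
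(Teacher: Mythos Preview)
Your argument is correct, and it is genuinely different from the route taken in the paper.

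The paper does not verify the universal property directly. Instead, it invokes Chapoton's result that the free ${\mbox{Trias}\rq}$ algebra on one generator is spanned by the faces of the standard simplices, so that the free ${\mbox{Trias}\rq}$ algebra on a differential graded space $V$ has underlying space $\bigoplus_{n\geq 1}{\mbox{Trias}\rq}(\bullet)_n\otimes V^{\otimes n}$. The proof then identifies ${\mbox{Trias}\rq}(\bullet)_n$ with $\K[{\mbox{Tub}(C_n)}]$, writes down an explicit basis $\{{\mathcal T}^n_{i_1\dots i_p}(T_1,\dots,T_n)\}$ of the free object on ${\mbox{\bf Tub}}$, observes that this same set is a basis of ${\mbox{\bf DTub}}$, and finally checks that the products and the differential agree under this identification.

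Your approach bypasses Chapoton's explicit model entirely: you start from the purely algebraic freeness statement of Lemma~\ref{diasstruct}, install the differential by the Leibniz clauses $viii)$--$x)$, and then prove the universal property by an induction on word length. This is more self-contained and more categorical; the paper's proof, by contrast, yields as a byproduct an explicit combinatorial basis of ${\mbox{\bf DTub}}$ indexed by tubings on disconnected graphs. Note also that the paper already grants your step~(a) in the paragraph immediately preceding the theorem (``there exists a unique way to extend $\partial$ to a differential map $d$ \dots''), so your consistency check fills in something the paper leaves implicit. Your sample verification for relation~$vii)$ is correct; the remaining cases $iv)$--$vi)$ reduce in the same way to combinations of $i)$--$iii)$ together with the associativity of $\vdash$ and $\dashv$.
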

\medskip

\begin{proof} The operad ${\mbox {Trias}\rq}$ is non-symmetric, and the free object spanned by one element is the space ${\mbox {Trias}\rq(\bullet)}$ spanned by the faces of standard simplexes, as shown in \cite{Cha}. So, the underlying vector space of the free ${\mbox {Trias}\rq}$ algebra spanned by a differential graded vector space $(V, \partial)$ is the vector space $\bigoplus_{n\geq 1} {\mbox {Trias}\rq(\bullet)}_n\otimes V^{\otimes n}$.
\medskip

For $n\geq 1$, let $C_n$ be the graph with $n$ nodes and no edge. For $n=1$, let $\circledcirc$ denotes the unique tubing of $C_1$, whose tube is the universal one, and let $\circ$ denotes the empty tubing . The elements of ${\mbox{Tub}(C_n)}$ are identified with the set ${\mbox{}{\mathbb 2}^n-1}$ of proper subsets of $[n]$.

The faces of the $n$ standard simplex ${\mathbb{\Delta}}_n$, of dimension $n{\mbox {-}1}$, are identified with the tubings on $C_n$ in the following way: \begin{enumerate}
\item the face of dimension $n{\mbox {-}1}$ of ${\mathbb{\Delta}}_n$ is the empty tubing on $C_n$, that is the tubing $\circ\coprod \dots \coprod \circ$,
\item the face of dimension $j{\mbox {-}1}$ given by
\begin{align*} \{ (t_1,\dots ,t_n)\in [0; 1]^n\mid t_{i_1}+\dots +t_{i_p} =1,\ {\rm for}&\ 1\leq i_1<\dots <i_p<n,\ {\rm and}\\
 &t_k=0,\ {\rm for}\ k\notin \{i_1,\dots ,i_p\}\},\end{align*}
 is identified with the tubing $T_1\coprod \dots \coprod T_n$ such that $T_{i_l} = \circ$, for $1\leq l\leq p$, and $T_k=\circledcirc$, for $k\notin \{i_1,\dots ,i_p\}$. Denote ${\mathcal T}_{i_1\dots i_j}^n$ the tubing of $C_n$ identified with this face.\end{enumerate}

The set $\{ {\mathcal T}_{i_1\dots i_j}^n\mid \ 1\leq p\leq n\ {\rm and}\ 0\leq i_1<\dots <i_p\}$ is a basis of ${\mbox{Tub}(C_n)}$
\medskip

So, the underlying vector space of ${\mbox {Trias}\rq(\bullet)}$ is spanned, in degree $n$, by the set ${\mbox {Tub}(C_n)}$ of tubings of $C_n$, for $n\geq 1$. It is immediate to see that the products $\vdash $, $\dashv$ and $\times$ are given by the restrictions, to the graphs $C_n\rq $s, of the formulas described in Definition \ref{defn:triass}. The differential $d$ is uniquely defined by the conditions of Definition \ref{triass}.
\medskip

On the other hand, the vector space ${\mbox {\bf Tub}}_m$ has a canonical basis given by $\{ (\G, T)\mid \G \ {\rm is\ a\ connected\ simple\ graph\ with}\ m\ {\rm nodes,\ and}\ T\in {\mbox {Tub}(\G)}\}$.

For a family of elements $(\G_1,T_1),\dots ,(\G_n,T_n)$, let ${\mathcal T}_{i_1\dots i_p}^n(T_1,\dots,T_n)$ be the tubing of $\G_1\coprod\dots \coprod G_n$ defined by:
\begin{equation*}{\mathcal T}_{i_1\dots i_p}^n(T_1,\dots,T_n):=R_1\coprod \dots \coprod R_n,\end{equation*}
where $R_k=\begin{cases} T_k,& {\rm for}\ k\notin\{i_1,\dots ,i_p\},\\
{\overline {T_k}},& {\rm for}\ k\in\{i_1,\dots ,i_p\}.\end{cases}$

By \cite{Cha}, we know that the set
\begin{equation*} {\bold T} := \{{\mathcal T}_{i_1\dots i_p}^n(T_1,\dots,T_n)\mid 1\leq p\leq n,\ 1\leq i_1<\dots <i_p\leq n\},\end{equation*}
for all simple connected graphs  $\G_i$ and all $T_i\in {\mbox {Tub}(\G_i)}$, with $1\leq i\leq p$, gives a basis of the free ${\mbox {Trias}\rq}$ algebra spanned by ${\mbox {\bf Tub}}$, where the degree of ${\mathcal T}_{i_1\dots i_j}^m(T_1,\dots,T_m)$ is the sum of the degrees of the $T_i\rq $s. But clearly, the set ${\bold T}$ is also a basis of ${\mbox {\bf DTub}}$.

Moreover, the products $\vdash$, $\dashv$ and $\times$ on the free algebra ${\mbox {Trias}\rq({\mbox {\bf Tub}})}$ are induced by the ones on ${\mbox {Trias}\rq(\bullet)}$, so they coincide with the products on ${\mbox {\bf DTub}}$, described in Definition \ref{triass}. Finally, as the differential map on ${\mbox {Trias}\rq({\mbox {\bf Tub}})}$ is completely determined by the products, it also coincides with the differential of ${\mbox {\bf DTub}}$, which ends the proof.\end{proof}

\bigskip

\subsection{The $L$- algebra structure of ${\mbox {\bf Tub}}$}
\medskip

There exist many ways to define products on the vector space ${\mbox {\bf Tub}}$, induced by the associative trialgebra structure of ${\mbox {\bf DTub}}$. However, they do not give an associative trialgebra, but an $L$-algebra in the sense of P. Leroux (see \cite{Ler}).

\begin{definition} \label{LerouxL} An {\it L-algebra} is a vector space $L$, equipped with two binary operations $\triangleright$ and $\triangleleft$ satisfying that:
\begin{equation*}x\triangleright (y\triangleleft z) = (x\triangleright y)\triangleleft z,\end{equation*}
for any elements $x, y, z\in L$.\end{definition}

Let $\G$ and $\Omega$ be two connected graphs, with $\vert{\mbox{Nod}(\G)}\vert=n$ and $\vert{\mbox{Nod}(\Omega)}\vert=m$. For any pair of tubings $T\in {\mbox {Tub}(\G)}$ and $S\in {\mbox {Tub}(\Omega)}$, consider the sets $X_T:=\{1\leq i_1<\dots <i_k\leq n\}$ of nodes of $\G$ which do not belong to any proper tube of $T$ and $X_S+n:= \{ n+1\leq j_1<\dots <j_h\leq m\}$ of nodes of $\Omega +n$ which do not belong to any proper tube of $S$. The connected graph $\G\cup_{T,S}\Omega$ is the graph obtained by adding to the graph $\G\coprod \Omega$ the edge $(i_k,j_1)$.
\medskip

The operations $\vdash$, $\dashv$ and $\times$ of ${\mbox {\bf DTub}}$ induce operations on ${\mbox {\bf Tub}}$, described by:\begin{enumerate}[(a)]
\item $T\triangleright S:= \{ t\in T\} \bigcup \{s+n\mid s\in {\overline S}\}\bigcup \{t_{\G\cup_{T,S}\Omega}\}$,
\item $T\triangleleft S:= \{t\mid t\in {\overline T}\}\bigcup \{s+n\in S+n\}\bigcup \{t_{\G\cup_{T,S}\Omega}\}$,
\item $T\perp S:=\{t\mid t\in {\overline T}\}\bigcup \{s+n\mid s\in {\overline S}\}\bigcup \{t_{\G\cup_{T,S}\Omega}\}$,\end{enumerate}
in ${\mbox {Tub}(\G\cup_{T,S}\Omega)}$.
\medskip

Clearly, the products $\triangleright$ and $\triangleleft$ defined on ${\mbox {\bf Tub}}$ are not associative, and do not satisfy the first two conditions of Definition \ref{triass}. However, we get the following result, whose proof follows from a straightforward calculation.

\begin{lemma}\label{Lalgrs} The vector space ${\mbox {\bf Tub}}$, equipped with the binary operations $\triangleright$ and $\triangleleft$ is an L-algebra. The product $\perp$ on ${\mbox {\bf Tub}}$ is associative, and it satisfies the following relations:\begin{enumerate}[(i)]
\item $T\triangleright(S\perp W) = (T\triangleright S)\perp W$,
\item $T\perp (S\triangleright W) = (T\triangleleft S)\perp W$,
\item $T\perp (S\triangleleft W) = (T\perp S)\triangleleft W$,\end{enumerate}
for elements $T,S$ and $W$ in ${\mbox {\bf Tub}}$.\end{lemma}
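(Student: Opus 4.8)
The plan is to verify each of the stated identities by unwinding the combinatorial definitions of the three operations $\triangleright$, $\triangleleft$ and $\perp$ given just before the lemma, and checking that both sides produce literally the same set of tubes in the same graph $\G\cup_{?,?}\Omega'$. The key observation that makes everything go through is that all three operations have the same underlying "shape": given tubings $T$ on $\G$ and $S$ on $\Omega$, they produce a tubing on the graph obtained from $\G\coprod\Omega$ by adding a single edge joining the last free node of the first factor to the first free node of the second factor, and then they either keep or discard the two universal tubes $t_{\G}$, $t_{\Omega}$ according to a fixed rule ($\triangleright$ keeps the left one, $\triangleleft$ keeps the right one, $\perp$ keeps neither). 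So for a ternary composite the combined graph is always $\G\coprod\Omega\coprod\Xi$ with two extra edges inserted, and the only thing to track is (a) which universal tubes $t_{\G}, t_{\Omega}, t_{\Xi}$ survive, and (b) that the two inserted edges, and hence the resulting connected graph, are the same on both sides.

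First I would record the "free node" bookkeeping: if $T$ is a tubing on a connected graph, its set of nodes not covered by a proper tube is determined by $T$ alone, and—crucially—adding or deleting the universal tube $t_{\G}$ does not change this set, nor does it change which node is the minimal or maximal free node. This is exactly why associativity of $\perp$ and the three mixed relations can even be stated: the edge that $\perp$, $\triangleright$, $\triangleleft$ insert between $\G$ and $\Omega$ depends only on $X_T$ and $X_S+n$, which are insensitive to whether universal tubes are present. I would state this as a one-line preliminary remark (it is implicit in Notation~\ref{previousdiass} and the paragraph defining $\cup_{T,S}$).

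Next, for associativity of $\perp$: on both $(T\perp S)\perp W$ and $T\perp(S\perp W)$ the tube set is $\overline{T}\cup(\overline{S}+n)\cup(\overline{W}+n+m)\cup\{t_{\text{big}}\}$, and the underlying graph in both cases is $\G\coprod\Omega\coprod\Xi$ with the edges $\{i_k, j_1\}$ (top free node of $\G$ to bottom free node of $\Omega$) and $\{j_h, l_1\}$ (top free node of $\Omega$ to bottom free node of $\Xi$) inserted; since $\overline{S\perp W}=\overline{S}\cup(\overline{W}+m)$ has the same free nodes as $S$ at the bottom and the same as $W$ at the top, the first insertion sees the same $j_1$ and the second the same $l_1$ on both sides. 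Then I would treat (i)–(iii) the same way, the only subtlety being to check that when one side uses $\triangleright$ or $\triangleleft$, the universal tube of the corresponding factor is retained on exactly one side of each equation so that the two tube sets literally coincide: e.g. in (ii), $T\perp(S\triangleright W)$ discards $t_{\G}$ and keeps $t_{\Omega}$ (from $S\triangleright W = \{t\in S\}\cup(\overline{W}+m)\cup\{t_{\Omega'}\}$, whose proper tubes include $S$'s proper tubes \emph{and} $t_{\Omega}$ itself), while $(T\triangleleft S)\perp W$ discards $t_{\G}$ (via $\triangleleft$) and keeps $S$'s universal tube inside $T\triangleleft S$ and then discards... — so I would carefully match: the point is that "$\triangleright$ on the right" and "$\triangleleft$ then $\perp$" both leave precisely $t_{\Omega}$ among the three universal tubes, and similarly for (i) and (iii). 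The $L$-algebra identity $T\triangleright(S\triangleleft W)=(T\triangleright S)\triangleleft W$ is the cleanest: both sides keep only $t_{\G}$, both kill $t_{\Omega}$ and $t_{\Xi}$, and both insert the same two edges.

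The main obstacle is purely notational rather than conceptual: one must be scrupulous about the renumbering shifts ($+n$, $+m$, $+n+m$) and about which of $t_{\G}$, $t_{\Omega}$, $t_{\Xi}$ is the universal tube of a \emph{composite} versus a \emph{factor}, since the definitions fold an iterated universal tube into the symbol $t_{\G\cup_{T,S}\Omega}$. Once the free-node invariance remark is in place, there is no real content beyond careful case-checking, which is why the lemma is stated with "proof follows from a straightforward calculation"; I would present the $\perp$-associativity and identity (ii) in full as representative, and leave (i), (iii) and the $L$-algebra axiom to the reader with the remark that they are verified identically.
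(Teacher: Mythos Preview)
Your approach---unfold the definitions, track the surviving universal tubes, and verify that the two bridging edges coincide---is exactly the ``straightforward calculation'' the paper invokes, and your verification plan for the $L$-algebra axiom, the associativity of $\perp$, and identities (i) and (iii) is correct as sketched.

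There is, however, a genuine gap in your treatment of identity~(ii), and your write-up trails off precisely at the point where it arises. You correctly observe that in both $T\perp(S\triangleright W)$ and $(T\triangleleft S)\perp W$ the middle universal tube $t_\Omega$ becomes a \emph{proper} tube of the inner composite, so that the outer $\perp$ sees no free nodes in $\Omega$ at all. Consequently the outer bridging edge on both sides joins $\max X_T$ directly to $\min X_W + n + m$, bypassing $\Omega$. But now compute the \emph{inner} bridging edge: on the left it comes from $S\triangleright W$ and joins $\max X_S + n$ to $\min X_W + m + n$ (an $\Omega$--$\Xi$ edge), while on the right it comes from $T\triangleleft S$ and joins $\max X_T$ to $\min X_S + n$ (a $\Gamma$--$\Omega$ edge). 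These are different edges, so the two underlying graphs $\Gamma\cup\Omega\cup\Xi$ with their respective pairs of added edges are \emph{not} equal, even though the two tubings consist of the same tubes $\overline T\cup(S+n)\cup(\overline W+n+m)\cup\{t_{\text{big}}\}$.

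Since elements of ${\mbox{\bf Tub}}$ are pairs $(\Gamma,T)$, this means (ii) as written does not follow from the mechanism you describe. Your plan to ``present identity (ii) in full as representative'' would therefore run into exactly this obstruction. The paper's proof is a one-line appeal to calculation and does not address this; so either the statement of (ii) contains a typo, or there is an additional identification being made that neither you nor the paper makes explicit. In any case your sketch needs to confront this rather than absorb (ii) into the pattern of the other identities.
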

\bigskip

\section{An operadic category associated to graph associahedra}
\medskip

Let ${\mbox {gVect}_{\K}}$ denotes the category of graded vector spaces over $\K$.

Given a plane rooted tree $Y$ and a graded vector space $V$, the vector space $Y(V)$ is
\begin{equation*} Y(V):=\bigotimes_{v\in {\mbox{Vert}(Y)}}V_{\vert v\vert},\end{equation*}
where ${\mbox{Vert}(Y)}$ denotes the set of internal vertices of the tree $Y$ and $\vert v\vert$ is the number of inputs of the vertex $v$.

The previous construction defines an functor ${\mathcal Y}: {\mbox {gVect}_{\K}}\longrightarrow {\mbox {gVect}_{\K}}$, given by:
\begin{equation*} {\mathcal Y}(V)=\bigoplus _{n\geq 0} \bigoplus_{\vert {\mbox{Lea}(Y)}\vert=n}Y(V),\end{equation*}
where ${\mbox{Lea}(Y)}$ denotes the number of leaves of a plane rooted tree $Y$.

The functor ${\mathcal Y}$ is a monad in the category of endofunctors of ${\mbox {gVect}_{\K}}$, equipped with the composition (see \cite{Ma}, \cite{GiKa}, \cite{Le}, \cite{MSS}, \cite{LV}) which defines non-symmetric operads.

In \cite{LR}, J.-L. Loday and the second author constructed another monad in the category of endofunctors of ${\mbox {gVect}_{\K}}$, where plane rooted trees were replaced by surjective maps, that is the set $\bigcup_{n\geq 1}{\mbox{Tub}(K_n)}$ of tubings of all complete graphs.
\medskip

Following the previous construction, we also may define a functor ${\mathbb P}:{\mbox {gVect}_{\K}}\longrightarrow {\mbox {gVect}_{\K}}$ as follows:\begin{enumerate}
\item for any tube $t$ is a tubing $T$ in $\G$, define the {\it arity} of $t$ as the number of nodes in $t$ which do not belong to any other tube $t\rq \in T\vert_t$ plus one. We denote the arity of $t$ by ${\mbox{ar}(t)}$.
\item for any tubing $T$ of $\G$ and any graded vector space $V$, let
$V_T:= \bigotimes _{t\in T}V_{\mbox{ar}(t)}.$
\item define
\begin{equation*} {\mathbb P}(V)_{n+1} := \bigoplus _{\vert {\mbox {Nod}(\G)}\vert = n}\left (\bigoplus _{T\in {\mbox {Tub}(\G)}} V_T\right ).\end{equation*}\end{enumerate}
However,  we do not get a monad structure ${\mathbb P}\circ {\mathbb P}\longrightarrow {\mathbb P}$, due to:\begin{enumerate}
\item ${\mathbb P}$ is not unital, because there exist many finite simple connected graphs with $n$ vertices, for any fixed $n\geq 3$,
\item the composition ${\mathbb P}\circ {\mathbb P}$ is not always defined. For a graph $\G$ and a tubing $T\in {\mbox{Tub}(\G)}$, we get that
\begin{equation*} {\mathbb P}({\mathbb P}(V))_T=\bigotimes_{t\in T}\bigl( \bigoplus _{\vert {\mbox {Nod}(\Omega_t)}\vert = {\mbox{ar}(t)}-1}\left (\bigoplus _{S_t\in {\mbox {Tub}(\Omega_t)}} V_{S_t}\right )\bigr).\end{equation*}
But, in order to apply substitution and get an element in ${\mathbb P}(V)$, we need that $\Omega_t=(\G_t)_{\mbox{Maxt}(T\vert_t)}^*$, for any $t\in T$.\end{enumerate}
\medskip

In \cite{BaMa}, M. Batanin and M. Markl introduced the notion of operadic category, which allows them to provide a large generalization of operads (see \cite{MSS}, \cite{Ma}, \cite{Le} and \cite{LV}). In \cite{Mark}, M. Markl defined an operadic category whose operads are precisely permutads.

The aim of the present section is to introduce a category, whose objects are tubings on finite graphs, and to show that substitution on graph associahedra provides an example of Batanin and Markl\rq s strict operadic category.
\medskip

\subsection{Operadic categories}
\medskip

We follow the definition of operadic category given by M. Batanin and M. Markl in \cite{BaMa}, for a more detailed description, examples and applications we refer to their publication. In all the section, ${\mathcal C}$ denotes a complete and cocomplete closed symmetric monoidal category, whose monoidal product is denoted by $\otimes$ and whose unit is denoted $I$.
\medskip

\begin{definition} \label{sFSet} The operad ${\mbox {sFSet}}$ is the operads whose objects are the linearly ordered sets $[n]=\{1<2<\dots <n\}$, for $n\geq 1$, and whose homomorphism are map between finite sets, which do not necessarily preserve the order.\end{definition}

The terminal object of ${\mbox {sFSet}}$ is $[1]$.

\begin{definition}\label{fiber} Given two maps $f\in {\mbox {sFSet}([n], [m])}$ and $i\in {\mbox {sFSet}([1], [m])}$ the {\it $i^{\text{th}}$ fiber} of $f$ on $i$ is the pull back $f^{-1}(i)$ of the diagram:
\begin{equation*} \begin{matrix}
f^{-1}(i)&\longrightarrow&[n]\\
\downarrow &\qquad &\downarrow\\
[1]&\longrightarrow &[m]\end{matrix}
\end{equation*}
where $f^{-1}(i)$ is identified with the finite set $\{j\mid 1\leq j\leq n,\ f(j)=i\}$ renumbered using the order induced by the linear order of $[f^{-1}(i)]$.\end{definition}

Strict operadic categories are categories with a functor to ${\mbox {sFSet}}$, having a family of terminal objects (one for each connected component of the category) and certain inverse images, which behave like pull-backs.

\begin{definition} \label{strictoperadiccat} A {\it strict operadic category} is a category ${\mathcal O}$ together with:\begin{enumerate}[(a)]
\item a fixed family of terminal objects $U_c$, for each connected component $c\in \pi_0( {\mathcal O})$,
\item a {\it cardinality} functor $\vert -\vert : {\mathcal O}\longrightarrow {\mbox {sFSet}}$,
\item an object $f^{-1}(i)$ in ${\mathcal O}$ such that $\vert f^{-1}(i)\vert =\vert f\vert^{-1}(i)$, for every pair of homomorphisms $f\in {\mathcal O}(T,S)$ and every element $i\in \vert S\vert$,\end{enumerate}
satisfying that \begin{enumerate}[(i)]
\item $\pi_0({\mathcal O})$ is small.
\item $\vert U_c\vert =[1]$, for any $c\in \pi_0({\mathcal O})$.
\item for any object $T$ in ${\mathcal O}$, the identity $1_T\in {\mathcal O}(T,T)$ satisfies that $1_T^{-1}(i)= U_{c_i}$, for all $i\in \vert T\vert$ and some $c_i\in \pi_0({\mathcal O})$,
\item for any pair of morphisms $f\in {\mathcal O}(T,S)$ and $g\in {\mathcal O}(S,R)$, and every $i\in \vert R\vert$, there exists $f_i: (g\circ f)^{-1}(i)\longrightarrow g^{-1}(i)$, such that
$\vert f_i\vert$ is the natural map $\vert g\circ f\vert ^{-1}(i)\longrightarrow \vert g\vert ^{-1}(i)$ in ${\mbox {sFSet}}$. Moreover, the assignment ${\mbox {Fib}_i}(g) :=g^{-1}(i)$ gives a functor ${\mbox {Fib}_i}: {\mathcal O}/R\longrightarrow {\mathcal O}$, which is the domain functor for $R=U_c$.
\item for any pair of morphisms $f\in {\mathcal O}(T,S)$ and $g\in {\mathcal O}(S,R)$, and any $j\in \vert S\vert$, we have that $f^{-1}(j) = f_i^{-1}(j)$, where $i= \vert g\vert (j)$.
\item given three homomorphisms $f\in {\mathcal O}(T,S)$, $g\in {\mathcal O}(S,Q)$ and $h\in {\mathcal O}(Q,R)$, and $i\in \vert R\vert$, the condition above states that there exists morphisms\begin{enumerate}
\item $(g\circ f)_i = g_i\circ f_i: (h\circ g\circ f)^{-1}(i)\longrightarrow h^{-1}(i),$
\item $f_i:(h\circ g\circ f)^{-1}(i)\longrightarrow (h\circ g)^{-1}(i)$,
\item $g_i:(h\circ g)^{-1}(i)\longrightarrow h^{-1}(i).$\end{enumerate}

For $j\in \vert Q\vert$, such that $\vert h\vert (j)=i$, we have that $g^{-1}(j) =(g_i)^{-1}(j)$ and $(g\circ f)^{-1}(j) = (g\circ f)_i^{-1}(j).$

As $\vert h^{-1}(i)\vert =\vert h\vert ^{-1}(i)$ is the inverse image (reordered) in ${\mbox {sFSet}}$,we get that $j\in \vert h^{-1}(i)\vert =\vert h\vert ^{-1}(i)$.

So, there exists $(f_i)_j:(g\circ f)_i^{-1}(j)\longrightarrow g_i^{-1}(j)$, the equality $f_j=(f_i)_j$ is required.\end{enumerate}\end{definition}

\begin{definition} \label{Ocollection} Given an strict operadic category ${\mathcal O}$ and a monoidal category ${\mathcal C}$, an {\it ${\mathcal O}$-collection} in ${\mathcal C}$ is a collection $\{E(T)\}_{T\in {\mathcal O}}$ of objects of ${\mathcal C}$, indexed by the objects of the category ${\mathcal O}$. For an ${\mathcal O}$-collection $E$ in ${\mathcal C}$ and a homomorphism $f:T\longrightarrow S$ in ${\mathcal O}$, the object $E(f)$ in ${\mathcal C}$ is defined by
\begin{equation*}E(f):=\bigotimes_{i\in \vert S\vert}E(f^{-1}(i)).\end{equation*}
\end{definition}

Operadic categories provide a good framework to define operadic composition.

\begin{definition} \label{operads} An {\it ${\mathcal O}$-operad} in ${\mathcal C}$ is an ${\mathcal O}$-collection ${\mathbb P}=\{{\mathbb P}(T)\}_{T\in {\mathcal O}}$ in ${\mathcal C}$, equipped with:\begin{enumerate}[(a)]
\item units $I\longrightarrow {\mathbb P}(U_c)$, for $c\in \Pi_0({\mathcal C})$
\item structure maps $\mu (f): {\mathbb P}(S)\otimes {\mathbb P}(f)\longrightarrow {\mathbb P}(T)$, for any $f\in {\mathcal O}(T,S)$,\end{enumerate}
satisfying \begin{enumerate}[(i)]
\item For any pair of homomorphisms $f\in {\mathcal O}(T,S)$ and $g\in {\mathcal O}(S,R)$, the following diagram commutes:
\begin{equation*}
\begin{matrix} \bigotimes_{i\in \vert R\vert}{\mathbb P}(R)\otimes {\mathbb P}(g)\otimes {\mathbb P}(f_i)&{\overset{{\small{\mbox{id}}\otimes \otimes_i\mu(f_i)}}{\longrightarrow}}&{\mathbb P}(R)\otimes {\mathbb P}(h)\\
{ {\mu(g)\otimes {\mbox{id}}}{\downarrow}} &\qquad\qquad &{\downarrow}{\small{\mu(h)}}\\
{\mathbb P}(R)\otimes {\mathbb P}(f)&{\overset{\small{\mu (f)}}{\longrightarrow}}&{\mathbb P}(T),\end{matrix}\end{equation*} where the $f_i$\rq s are the morphisms introduced in Definition \ref{strictoperadiccat}.
\item The compositions
\begin{equation*} {\mathbb P}(T)\longrightarrow\bigotimes_{i\in \vert T\vert }I\otimes {\mathbb P}(T)\longrightarrow\bigotimes_{i\in \vert T\vert} {\mathbb P}(U_{c_i})\otimes {\mathbb P}(T)\overset{\mu({\mbox{id}})}{\longrightarrow} {\mathbb P}(T),\end{equation*}
and
\begin{equation*} {\mathbb P}(T)\otimes I\longrightarrow{\mathbb P}(T)\otimes {\mathbb P}(U_c)\overset{\mu(p_c)}{\longrightarrow }{\mathbb P}(T),\end{equation*}
where $c:=\pi_0(T)$ and $p_c$ is the unique element of ${\mathcal O}(T,U_c)$, are the identity.\end{enumerate}\end{definition}
\medskip

\subsection{The category ${\mathcal O}_{CD}$}
\medskip

We want to describe the operadic category ${\mathcal O}_{CD}$ such that the substitution of tubings defined in Section 2 provides a natural example of ${\mathcal O}_{CD}$ operad. Our model is M. Markl\rq s operadic category ${\mbox {Per}}$, described in \cite{Mark}.
\medskip

\begin{definition} \label{operadiccatCD} Define the category ${\mathcal O}_{CD}$ as follows:\begin{enumerate}
\item The objects of ${\mathcal O}_{CD}$ are pairs $(\G, T)$, where $\G$ is a connected simple finite graph and $T$ is a tubing of $\G$.
\item  The homomorphisms in ${\mathcal O}_{CD}$ are given by:
\begin{equation*}{\mathcal O}_{CD}((\G,T), (\Omega , S)):=\begin{cases} \emptyset ,&{\rm for}\ \G\neq \Omega\ {\rm or}\ T\not\preceq S,\\
\{\iota _{\G, T,S}\},&{\rm for}\ \G=\Omega\ {\rm and}\ T\preceq S,\end{cases}\end{equation*} where $T\preceq S$ is described in Definition \ref{ordertubings}, and means that $T$ is obtained from $S$ by adding compatible tubes.\end{enumerate}
\end{definition}
\medskip

Note that $\pi_0({\mathcal O}_{CD})= {\mbox {CGraph}}$, the set of all simple connected finite graphs equipped with a total order on the set of nodes, and the terminal objects of ${\mathcal O}_{CD}$ are $(\G, T_{\G})$, for $\G\in {\mbox {CGraph}}$.
\medskip

Let $\G$ be a finite connected simple graph and let $T$ be a tubing of $\G$, we denote by ${\mathbb L}(T)$ the number of tubes of $T$. The definition of a functor from the category of tubings to the category ${\mbox {sFSet}}$, requires a standard way to enumerate tubes in a tubing $T$.

\begin{definition} \label{frakN} Let $T$ be a tubing  of a connected simple graph $\G$. The map ${\mathfrak N}_T: \{ t\in T\}\longrightarrow [{\mathbb L}(T)]$ is defined recursively, as follows:\begin{enumerate}
\item If $T=T_{\G}$, then ${\mathfrak N}_T(t_{\G})= 1$.
\item Suppose that $T$ has more than one tube. Let $X= \{ t_1,\dots ,t_{r}\}$ be the set of tubes in $T$ which do not contain another tube of $T$, ordered in such a way that the minimal node of $t_i$ is smaller than the minimal node of $t_{i+1}$, for $1\leq i<r$. In this case ${\mathfrak N}_T(t_i) = i$, for $1\leq i\leq r$.
\item Consider the reconnected complement $\G_{t_1,\dots ,t_{r}}^*:= (((\G_{t_1}^*)_{t_2}^*)\dots )_{t_{r}}^*$ and let $T_1$ be the tubing induced by $T$ on $\G_{t_1,\dots ,t_{r}}^*$. As
${\mathbb L}(T_1)<{\mathbb L}(T)$, we may suppose that ${\mathfrak N}_{T_1}$ is defined. We define
\begin{equation*} {\mathfrak N}_T(t) = {\mathfrak N}_{T_1}(t)+r,\end{equation*}
for $t\notin X_1$.\end{enumerate}\end{definition}

The map which sends the object $(\G,T)$ to the set $\vert (\G,T)\vert := [{\mathbb L}(T)]$ induces a functor ${\mathcal O}_{CD}\longrightarrow {\mbox {sFSet}}$. We get that
\begin{equation*} \vert \iota_{\G, T,S}\vert (k) = {\mathfrak N}_S(t\rq),\end{equation*}
where $k={\mathfrak N}_T(t)$ and $t\rq$ is the minimal tube in $S$ which contains $t$.
\medskip

For $f= \iota_{\G, T,S}$ and $i\in \vert (\G, S)\vert $, there exists a unique tube $s\in S$ such that ${\mathfrak N}_S(s)=i$. Define $f^{-1}(i)$ as:
\begin{equation*} f^{-1}(i)=\left((\G_s)_{(S\vert_s)}^*, (T\vert_s)_{(S\vert_s)}^*\right),\end{equation*}
where $(\G_s)_{(S\vert_s)}^*$ is the reconnected complement of $\G_s$ by the set ${\mbox{Maxt}(S\vert_s)}$ of proper maximal tubes of $S\vert_s$, as described in Definition \ref{maxtubings}, and $(T\vert_s)_{(S\vert_s)}^*$ denotes the tubing induced by $T\vert_s$ on $(\G_s)_{(S\vert_s)}^*$.
\medskip

For example, consider

\begin{figure}[h]

\includegraphics[width=4.5in]{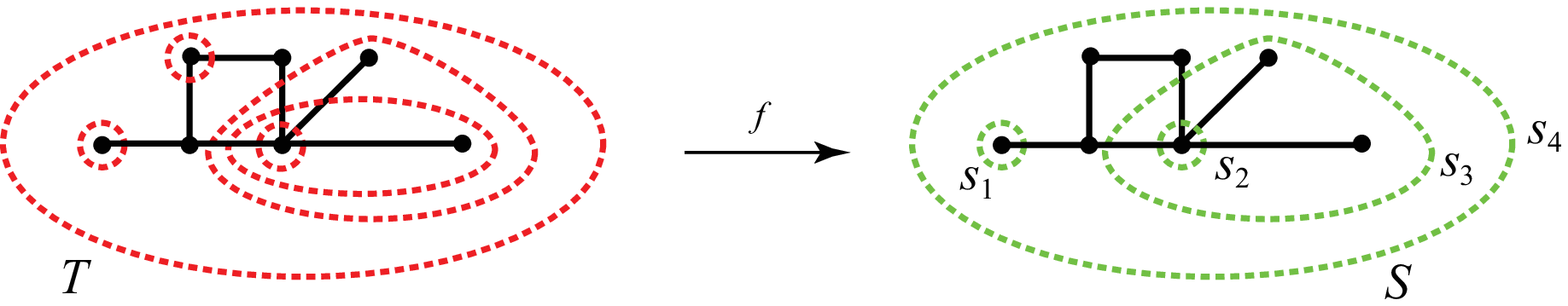}
\end{figure}

we get that

\begin{figure}[h]

\includegraphics[width=4.5in]{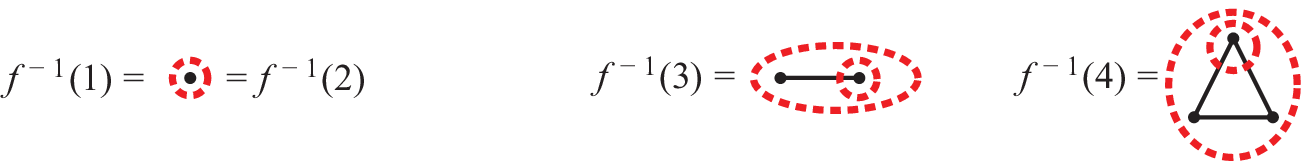}
\end{figure}

\begin{proposition}\label{propstrictopcat} The category ${\mathcal O}_{CD}$ is a strict operadic category .\end{proposition}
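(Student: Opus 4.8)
The plan is to verify the three structural pieces of Definition~\ref{strictoperadiccat} in turn, and then check the six axioms $(i)$--$(vi)$, relying throughout on the lemmas about reconnected complements and substitution from Section~2. First, the data: the family of terminal objects is $\{(\G, T_{\G})\}_{\G\in{\mbox{CGraph}}}$, one per connected component of ${\mathcal O}_{CD}$; the cardinality functor is $(\G,T)\mapsto[{\mathbb L}(T)]$, which is functorial because a morphism $\iota_{\G,T,S}$ with $T\preceq S$ induces, via ${\mathfrak N}_T$ and ${\mathfrak N}_S$, a well-defined (not necessarily order-preserving) map $[{\mathbb L}(T)]\to[{\mathbb L}(S)]$ sending $k={\mathfrak N}_T(t)$ to ${\mathfrak N}_S(t')$ where $t'$ is the smallest tube of $S$ containing $t$; and the fibers are $f^{-1}(i)=\bigl((\G_s)^*_{S\vert_s},(T\vert_s)^*_{S\vert_s}\bigr)$ for the unique $s\in S$ with ${\mathfrak N}_S(s)=i$. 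The first thing I would check is that $\vert f^{-1}(i)\vert=\vert f\vert^{-1}(i)$: the tubes of $(T\vert_s)^*_{S\vert_s}$ are exactly the tubes of $T$ contained in $s$ but in no maximal proper tube of $S\vert_s$ (together with restrictions of larger tubes of $T$ meeting $s$), which biject with the elements of $[{\mathbb L}(T)]$ mapping to $i$; here Lemma~\ref{separatedtubes} guarantees that iterating reconnected complements over ${\mbox{Maxt}(S\vert_s)}$ is order-independent, so $f^{-1}(i)$ is well defined.

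Next I would dispatch conditions $(i)$--$(iii)$, which are essentially formal: $\pi_0({\mathcal O}_{CD})={\mbox{CGraph}}$ is small; $\vert(\G,T_{\G})\vert=[1]$; and for the identity $1_{(\G,T)}$ the fiber over $i={\mathfrak N}_T(t)$ is $\bigl((\G_t)^*_{T\vert_t},(T\vert_t)^*_{T\vert_t}\bigr)$, whose unique tube is the universal one, hence a terminal object $U_{c_i}$ with $c_i=(\G_t)^*_{T\vert_t}\in{\mbox{CGraph}}$. The substance is in $(iv)$: given $T\preceq S\preceq R$ with $f=\iota_{\G,T,S}$, $g=\iota_{\G,S,R}$, and $i\in[{\mathbb L}(R)]$ corresponding to $r\in R$, I must produce $f_i\colon (g\circ f)^{-1}(i)\to g^{-1}(i)$ whose cardinality is the natural map. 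Both fiber objects are built by restricting to $\G_r$ and reconnecting: $(g\circ f)^{-1}(i)=\bigl((\G_r)^*_{T\vert_r},(T\vert_r)^*_{T\vert_r}\bigr)$ and $g^{-1}(i)=\bigl((\G_r)^*_{S\vert_r},(S\vert_r)^*_{S\vert_r}\bigr)$. Since $T\vert_r\preceq S\vert_r$ as tubings of $\G_r$, applying the functoriality of reconnected complements (the analogue of Remark~\ref{rem:restinduc}(2) and Lemma~\ref{rem:restrictionreconnected}, which say that a finer tubing induces a finer tubing on the reconnected complement, on a graph with fewer edges) yields that $(T\vert_r)^*_{T\vert_r}\preceq(S\vert_r)^*_{S\vert_r}$ lives over $(\G_r)^*_{T\vert_r}$, giving the morphism $f_i$ in ${\mathcal O}_{CD}$; its cardinality is the restriction of $\vert g\circ f\vert^{-1}(i)\to\vert g\vert^{-1}(i)$ by the explicit description of ${\mathfrak N}$. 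That ${\mbox{Fib}_i}$ is a functor on ${\mathcal O}_{CD}/R$ (the domain functor when $R=U_c$) follows by the same bookkeeping.

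Conditions $(v)$ and $(vi)$ are compatibility statements among iterated fibers: $(v)$ says $f^{-1}(j)=f_i^{-1}(j)$ for $j\in\vert S\vert$ with $\vert g\vert(j)=i$, and $(vi)$ is the "three-fold composition'' coherence $f_j=(f_i)_j$. Both reduce, after unwinding the definitions, to the single fact that for nested tubes $s'\subseteq s$ (here $s\in S$ with ${\mathfrak N}_S(s)=j$, $s$ inside $r\in R$), reconnecting $\G$ first with respect to the maximal tubes inside $r$ and then with respect to those inside $s$ gives the same graph as reconnecting directly with respect to the maximal tubes inside $s$—precisely the associativity packaged in Theorem~\ref{lacompdetubisass}(1), together with Lemma~\ref{separatedtubes} for the order-independence. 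I expect the main obstacle to be purely organizational rather than conceptual: carefully matching the recursive enumeration ${\mathfrak N}_T$ against the iterated-fiber constructions so that all the required equalities of objects (not just isomorphisms) hold on the nose, and checking that the single-element hom-sets make every diagram in sight commute automatically. Once the identifications $f^{-1}(i)=(\G_r)^*_{\cdots}$ are seen to be strictly compatible with nesting of tubes, every axiom follows, and ${\mathcal O}_{CD}$ is a strict operadic category; restricting to complete graphs recovers Markl's ${\mbox{Per}}$.
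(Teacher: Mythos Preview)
Your data and axioms $(i)$--$(iii)$ are fine, but there is a genuine error in the key step $(iv)$: you have miscomputed the fibers. For $g\circ f=\iota_{\G,T,R}$ and $i\in\vert R\vert$ corresponding to $r\in R$, the definition gives
\[
(g\circ f)^{-1}(i)=\bigl((\G_r)^*_{R\vert_r},\,(T\vert_r)^*_{R\vert_r}\bigr),\qquad
g^{-1}(i)=\bigl((\G_r)^*_{R\vert_r},\,(S\vert_r)^*_{R\vert_r}\bigr),
\]
because the reconnected complement is always taken with respect to the \emph{codomain} tubing---here $R$, not $T$ or $S$. You wrote $(\G_r)^*_{T\vert_r}$ and $(\G_r)^*_{S\vert_r}$ for the two underlying graphs; these are graphs on different node sets (the complement of the maximal proper tubes of $T\vert_r$, respectively $S\vert_r$), so there is no morphism between them in ${\mathcal O}_{CD}$ at all, and your appeal to Lemma~\ref{rem:restrictionreconnected} cannot repair this. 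The whole point is that both fibers sit over the \emph{same} graph $(\G_r)^*_{R\vert_r}$; then $T\preceq S$ immediately gives $(T\vert_r)^*_{R\vert_r}\preceq (S\vert_r)^*_{R\vert_r}$ as tubings of that one graph, and $f_i$ is the corresponding $\iota$.

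Once you correct the fibers in $(iv)$, your strategy for $(v)$ and $(vi)$---reducing to the identification $\bigl(((\G_{r})^*_{R\vert_r})_{s}\bigr)^*_{S\vert_s}=(\G_s)^*_{S\vert_s}$ for $s\subseteq r$ with $r$ minimal in $R$ above $s$---is exactly right and is essentially the content of Theorem~\ref{lacompdetubisass}(1); the paper argues this identification directly rather than citing the theorem, but the substance is the same. The single-element hom-sets do, as you say, make every remaining diagram commute automatically.
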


\begin{proof} We know that $\pi_0({\mathcal O}_{CD})$ is the set of all simply connected finite graphs whose set of nodes is totally ordered. For any simple connected graph $\G\in {\mathcal O}_{CD}$, the terminal object $U_{\G}$ is the pair $(\G, T_{\G})$. Clearly, we have that $\vert (\G, T_{\G})\vert =1$.
\medskip

Suppose that $(\G, T)$ is an object of ${\mathcal O}_{CD}$, where $T$ has $k$ tubes. For any $1\leq i\leq k$, we have that the identity map $1_{(\G,T_{\G})}^{-1}(i)$ is
\begin{equation*}((\G_{t_i})_{(T\vert_{t_i})}^*, (T\vert_{t_i})_{(T\vert_{t_i})}^*)=(\G_{t_i})_{(T\vert_{t_i})}^*, T_{(\G_{t_i})_{(T\vert_{t_i})}^*})= U_{\G_{t_i})_{(T\vert_{t_i})}^*},\end{equation*}
where $t_i$ is the unique tube in $T$ such that ${\mathfrak N}_T(t_i) = i$.
\medskip

Let now $\G$ be a graph, and $T\preceq S\preceq R$ be three tubings of $\G$. For an element $1\leq i\leq \vert R\vert$, there exists a unique tube $r_i$ in $R$, satisfying that ${\mathfrak N}_R(r_i) = i$. As $T\preceq S\preceq R$, we get that $r_i$ belongs to $S$ and to $T$, and that ${\mathfrak N}_R(r_i) \leq {\mathfrak N}_S(r_i)\leq {\mathfrak N}_T(r_i)$.

We have that ${\mathcal O}_{CD}((\G, T),(\G,S))=\{\iota_{\G, T, S}\}$ and ${\mathcal O}_{CD}((\G, S),(\G,R))=\{\iota_{\G, S, R}\}$. It suffices to prove condition $(iv)$ of
Definition \ref{strictoperadiccat} for $f=\iota_{\G, T, S}$ and $g=\iota_{\G, S, R}$. In this case, we get that $g\circ f =\iota_{\G, T, R}$.

So, for any $1\leq i\leq \vert R\vert$, we get that $g^{-1}(i) = ((\G_{r_i})_{R\vert_{r_i}}^*, (S\vert_{r_i})_{(R\vert_{r_i})}^*)$ and $(g\circ f)^{-1}(i)= ((\G_{r_i})_{R\vert_{r_i}}^*, (T\vert_{r_i})_{(R\vert_{r_i})}^*)$, where ${\mathfrak N}_R(r_i)=i$.

As $T\preceq S$, we have that $(T\vert_{r_i})_{(R\vert_{r_i})}^*\preceq (S\vert_{r_i})_{(R\vert_{r_i})}^*$, so
\begin{equation*} f_i=\iota_{(\G_{r_i})_{R\vert_{r_i}}^*, (T\vert_{r_i})_{(R\vert_{r_i})}^*, (S\vert_{r_i})_{(R\vert_{r_i})}^*}.\end{equation*}

When ${\mathcal O}_{CD}((\G, T),(\G,S))=\emptyset $ or ${\mathcal O}_{CD}((\G, S),(\G,R))=\emptyset$, the result is immediate because there do not exist two morphisms $f$ and $g$.

Clearly, we get that the assignment ${\mbox {Fib}_i(g)}:=g^{-1}(i)$ gives a functor ${\mbox {Fib}_i}:{\mathcal O}_{CD}/(\G,R)\longrightarrow {\mathcal O}_{CD}$, which is the domain functor for $(\G,R) = (\G, T_{\G})$, for any graph $\G$.
\medskip

Let $j\in \vert S\vert$ be such that $\vert g\vert (j)=i$. There exists a unique tube $s_j\in S$ satisfying that ${\mathfrak N}_S(s_j)=j$ and that the minimal tube of $R$ which containing $s_j$ is $r_i$.

We have that $f^{-1}(j)=((\G_{s_j})_{S\vert _{s_j}}^*, (T\vert_{s_j})_{S\vert _{s_j}}^*)$.

On the other hand, $f_i: ((\G_{r_i})_{R\vert_{r_i}}^*, (T\vert_{r_i})_{R\vert_{r_i}}^*) \longrightarrow ((\G_{r_i})_{R\vert_{r_i}}^*, (S\vert_{r_i})_{R\vert_{r_i}}^*)$, is the map
$\iota_{(\G_{r_i})_{R\vert_{r_i}}^*, (T\vert_{r_i})_{R\vert_{r_i}}^*,  (S\vert_{r_i})_{R\vert_{r_i}}^*}$.

Note that, as $S\preceq R$, $s_j\subseteq r_i$ and any tube $r$ of $R$ containing $s_j$ satisfies that $r_i\subseteq r$, we have that $((\G_{r_i})_{R\vert_{r_i}}^*)_{s_j})_{S\vert_{s_j}}^* =\G_{s_j})_{S\vert _{s_j}}^*$.

Moreover, the restriction of the tubing $(T\vert_{r_i})_{R\vert_{r_i}}^*$ to $(\G_{s_j})_{S\vert _{s_j}}^*$ coincides with the tubing $(T\vert_{s_j})_{S\vert_{s_j}}^*$. Therefore, $f_i^{-1}(j)= ((\G_{s_j})_{S\vert _{s_j}}^*, (T\vert_{s_j})_{S\vert _{s_j}}^*)= f^{-1}(j)$.
\medskip

To prove the last condition of Definition \ref{strictoperadiccat}, consider a graph $\G$ and tubings $T\preceq S\preceq Q\preceq R$ of $\G$. We get that $f=\iota_{\G, T, S}$, $g=\iota_{\G, S, Q}$  and $h=\iota_{\G, Q, R}$. If we do not have that $T\preceq S\preceq Q\preceq R$, then the triple of morphisms $(f,g,h)$ does not exist.
\medskip

Let $1\leq j\leq \vert Q\vert$. There exists a unique tube $q_j$ in $Q$ such that ${\mathfrak N}_Q(q_j)=j$. If $\vert \iota_{\G,Q, R}\vert (j)=i$, for some $1\leq i\leq \vert R\vert$, then there exists a unique tube $r_i\in R$ such that $r_i$ is the minimal tube in $R$ which contains $q_j$.

We have that
\begin{equation*} f_i = \iota_{(\G_{r_i})_{R\vert_{r_i}}^*, (T\vert_{r_i})_{(R\vert_{r_i})}^*, (S\vert_{r_i})_{(R\vert_{r_i})}^*}: ((\G_{r_i})_{R\vert_{r_i}}^*, (T\vert_{r_i})_{(R\vert_{r_i})}^*)\longrightarrow ((\G_{r_i})_{R\vert_{r_i}}^*, (S\vert_{r_i})_{(R\vert_{r_i})}^*),\end{equation*}
 and $h^{-1}(i)= ((\G_{r_i})_{R\vert_{r_i}}^*, (Q\vert_{r_i})_{(R\vert_{r_i})}^*)$.

As $j\in \vert h\vert ^{-1}(i)$, the morphism $(f_i)_j$ is the restriction of $f_i$ to $(g\circ f)^{-1}(j) = ((\G_{q_j})_{Q\vert_{q_j}}^*, (T\vert_{q_j})_{(Q\vert_{q_j}}^*)$. So, $(f_i)_j=\iota_{(\G_{q_j})_{Q\vert_{q_j}}^*, (T\vert_{q_j})_{Q\vert_{q_j}}^*,  (S\vert_{q_j})_{Q\vert_{q_j}}^*}$, which coincides with the definition of $f_j$. \end{proof}

\begin{remark} \label{rem:catPer} Consider the full subcategory ${\mathcal O}_K$ of ${\mathcal O}_{CD}$ whose objects are pairs $(K_n, T)$, where $K_n$ is the complete graphs with $n$ nodes. As shown in Example \ref{excomplete graph}, $T$ is given by a family of tubes $t^1\subsetneq t^2\subsetneq \dots \subsetneq t^{r-1}\subsetneq [n]$.

Therefore, the tubing $T$ is identified with the map $f_T:[n]\mapsto [r]$, given by $f_T(i)= j$, where $j$ is the minimal integer satisfying that the node $i$ belongs to $t_j$ (that is, $i\notin t_{j-1}$ and $i\in t_j$).
\medskip

Suppose that we have a morphisms $\varphi: (K_n, T)\longrightarrow (K_n, S)$ in ${\mathcal O}_{CD}$, and that $S=\{s_1\subsetneq \dots s_{k-1}\}\subsetneq [n]$.
As $S\subseteq T$, we have that $k\leq r$.

The map $g: [r]\longrightarrow [k]$, defined by $g(i)=j$, where $s_j$ is the minimal tube of $S$ containing $t_i$, for $1\leq i\leq r$, is clearly surjective, and satisfies that $g\circ f_T=f_s$.
\medskip

Therefore, the category ${\mathcal O}_K$ is equivalent to the category ${\mbox{Per}}$, defined in 14.4 of \cite{BaMa1} and in \cite{Mark}. \end{remark}
\bigskip

\end{document}